\newtheorem{theorem}{Theorem}[section]
\newtheorem{corollary}[theorem]{Corollary}
\newtheorem{definition}[theorem]{Definition}
\newtheorem{example}[theorem]{Example}
\newtheorem{lemma}[theorem]{Lemma}
\newtheorem{proposition}[theorem]{Proposition}
\newtheorem{remark}[theorem]{Remark}
\newenvironment{proof}[1][Proof]{\noindent \textbf{#1.} }{\  $\Box$}
\numberwithin{equation}{section}
\begin{document}

\title{\textbf{$G$-Expectation Weighted Sobolev Spaces, Backward SDE and
Path Dependent PDE}}
\author{Shige Peng\thanks{%
School of Mathematics and Qilu Institute of Finance, Shandong University,
peng@sdu.edu.cn, Peng's research was partially supported by NSF of China No.
10921101; and by the 111 Project No. B12023} \and Yongsheng Song\thanks{%
Academy of Mathematics and Systems Science, CAS, Beijing, China,
yssong@amss.ac.cn. Research supported by by NCMIS; Youth Grant of National
Science Foundation (No. 11101406); Key Lab of Random Complex Structures and
Data Science, CAS (No. 2008DP173182).} }
\maketitle
\date{}

\begin{abstract}
We introduce a new notion of $G$-expectation-weighted Sobolev spaces, or in
short, $G$-Sobolev spaces, and provide a 1-1 correspondence between a type
of backward SDEs driven by $G$-Brownian motion
\begin{equation*}
Y_{t}=\xi+\int_{t}^{T}f(s,Y_{s},Z_{s},\eta_{s})ds-%
\int_{t}^{T}Z_{s}dB_{s}-(K_{T}-K_{t}),
\end{equation*}
where $K_{t}=\frac{1}{2}\int_{0}^{t}\eta_{s}d\langle
B\rangle_{s}-\int_{0}^{t}G(\eta_{s})ds$, and a type of path dependent PDEs
in the corresponding  $G$-Sobolev space $W_{G}^{1,2;p}(0,T)$
\begin{align*}
D_{t}u+G(D_{x}^{2}u)+f(t,u,D_{x}u,D_{x}^{2}u) & =0,\  \ t\in \lbrack0,T), \\
u(T, \omega)) & =\xi(\omega).
\end{align*}
When  $f$ is independent of $D_{x}^{2}u$, we also formulate a
different type of weak solution to the PPDEs in the  $G$-Sobolev
space $W_{\mathcal{A}_G}^{\frac{1}{2},1;p}(0,T)$, which is an expansion of  $W_{G}^{1,2;p}(0,T)$ with weaker derivatives. This weak formulation corresponds exactly to the $G$-BSDEs studied in [HJPS12]. Particularly, for the linear case of $G$ corresponding to the classical Wiener probability
space $(\Omega,\mathcal{F},P)$, we have established a 1-1 correspondence
between BSDEs and a type of quasilinear path dependent PDEs, whose solutions
are defined in a Sobolev space weighted by the Wiener measure. 
\end{abstract}

\textbf{Key words}: backward SDEs, partial differential equations, path
dependent PDEs, $G$-Expectation, $G$-martingale, Sobolev space, $G$-Sobolev
space.

\textbf{MSC-classification}: 60H, 60E, 62C, 62D, 35J, 35K.


\section{Introduction}

The notion of path dependent PDEs was proposed in Peng's ICM2010 lecture. In
fact for fully nonlinear path dependent PDEs corresponding to $G$%
-martingales, the motivation was already revealed in the construction of $G$%
-expectation. The main idea of the construction of a $G$-expectation can be
seen as to solve the following well-posed classical PDE (called $G$-heat
equation), for a given sublinear function $G(a)=\frac{1}{2}(\overline{\sigma}%
^{2}a^{+}-\underline {\sigma}^{2}a^{-})$ defined on $a\in \mathbb{R}$,%
\begin{equation}  \label{NG}
\partial_{t}u(t,x)+G(\partial^2_{x}u(t,x))=0,\  \  \ t\in \lbrack0,T),\  \ x\in
\mathbb{R}
\end{equation}
with a given terminal condition $u(T,x)=\varphi(x)$, where $\varphi$ is a
Lipschitz function. For readers not familiar with this type of
fully nonlinear PDEs, we suggest to consider a linear function $G(a)=\frac{a%
}{2}$, corresponding to the case $\overline{\sigma}^{2}=\underline{\sigma}%
^{2}=1$. In this case the $G$-heat equation becomes%
\begin{equation}  \label{LG}
\partial_{t}u(t,x)+\frac{1}{2}\partial^2_{x}u(t,x)=0,\  \  \ t\in \lbrack
0,T),\  \ x\in \mathbb{R}
\end{equation}
and the corresponding solution is the following smooth function:%
\begin{equation}  \label{LG-formula}
u(t,x)=\frac{1}{\sqrt{2\pi}}\int_{-\infty}^{\infty}\varphi(x+\sqrt{t}y)\exp(-%
\frac{y^{2}}{2})dy.
\end{equation}

An important point of view is to understand the solution $u$ as a stochastic
process, or a function of paths $\bar{u}(t,\omega)=u(t,\omega(t))_{t\in
\lbrack0,T]}$ defined on $\Omega_{T}=C_{0}([0,T],\mathbb{R})$, the
collection of $\mathbb{R}$-valued continuous functions $\omega$ on $[0,T]$
with $\omega(0)=0$. The corresponding PDE is a special case of the following
path dependent PDE
\begin{equation}  \label{NG-path}
D_{t}\bar{u}(t,\omega)+G(D^2_{x}\bar{u}(t,\omega))=0,\  \  \ t\in \lbrack
0,T),\  \  \omega \in{\normalsize \Omega}_{T}
\end{equation}
with terminal condition $\bar{u}(T,\omega)=\varphi(\omega(T))$. Here the
derivatives of of $u(\cdot,\omega)$ is trivially defined as
\begin{align}  \label{opera1}
D_{t}\bar{u}(t,\omega) & =\partial_{t}u(t,x)|_{x=\omega(t)},\  \  \\
D_{x}\bar {u}(t,\omega)& =\partial_{x}u(t,x)|_{x=\omega(t)}, \  \ D^2_{x}\bar{%
u}(t,\omega) =\partial^2_{x}u(t,x)|_{x=\omega(t)}.\  \  \   \label{opera2}
\end{align}
For a more general function of paths
\begin{equation*}
\bar{u}(t,\omega)=u_k(t,x;\omega(t_{1}),\cdots,\omega(t_{k}))|_{x=%
\omega(t)},\ t\in[t_{k},t_{k+1}]
\end{equation*}
with $t_{1}<\cdots<t_{k}$, $D_{t}\bar{u}(t,\omega)$ and $D_{x}^{2}\bar{u}%
(t,\omega)$ are defined similarly for the function $u_k$ of $(t,x)$
parameterized by $\omega(t_{1})$,$\cdots ,\omega(t_{k})$. Here $u_k$, $%
k=0,\cdots,n-1$, are smooth functions on $[t_k, t_{k+1}]\times R^{k+1}$%
satisfying
\begin{equation*}
u_{k}(t_{k+1},x;x_{1},\cdots,x_{k})=u_{k+1}(t_{k+1},x;x_{1},\cdots x_{k},x).
\end{equation*}
The definition of derivatives for the above smooth cylinder path process $%
u(t,\omega)$ corresponds perfectly with Dupire's one.

Now this path dependent PDE (\ref{NG-path}) is well-defined on the path
space $\Omega_{T}$ with terminal condition $\bar{u}(T,\omega)=\xi \in
L_{ip}(\Omega_{T})$, the space of all real-valued functions of paths of the
form $\xi(\omega )=\varphi(\omega(t_{1}),\cdots,\omega(t_{n}))$, $%
0<t_{1}<\cdots<t_{n}=T$, $n=1,2,\cdots$. Here $\varphi$ is a locally Lipschitz
function on $\mathbb{R}^{n}$. We can solve the PPDE (\ref{NG-path}) with a
given terminal condition $\xi \in L_{ip}(\Omega_{T})$. In fact for each $\xi
\in L_{ip}(\Omega_{T})$, there exists a unique (viscosity) solution $\bar{u}%
^{\xi}(t,\omega)_{t\in \lbrack0,T]}$ with terminal condition $\bar{u}%
^{\xi}(T,\omega)=\xi(\omega)$ such that $\bar{u}(t,\omega)\in
L_{ip}(\Omega_{t})$.

An important step in the $G$-expectation theory is to define $\bar{u}^{\xi
}(0,\omega)$ as a functional $\mathbb{E}^{G}$:
\begin{equation*}
\mathbb{E}^{G}{\normalsize [\xi]:=\bar{u}}^{\xi}{\normalsize (0,\omega):L}%
_{ip}(\Omega_{T})\mapsto \mathbb{R}.
\end{equation*}
In fact $\mathbb{E}^{G}$ forms a sublinear expectation defined on the space
of random variables ${\normalsize L}_{ip}(\Omega_{T})$. Moreover, the
corresponding time-conditional expectation is also well defined by%
\begin{equation*}
\mathbb{E}_{t}^{G}{\normalsize [\xi]:=\bar{u}(t,\omega):L}_{ip}(\Omega
_{T})\mapsto{\normalsize L}_{ip}(\Omega_{t}),\  \ t\in \lbrack0,T].
\end{equation*}
Furthermore, since this expectation is sublinear, we can naturally
introduce, for each $p\geq1$, a norm $\left \Vert \xi \right \Vert
_{L_{G}^{p}}:=\mathbb{E}^{G}{\normalsize [|\cdot|}^{p}{\normalsize ]}^{1/p}$
on ${\normalsize L}_{ip}(\Omega_{T})$ by which the completion $%
L_{G}^{p}(\Omega_{T})$ is a Banach space and thus a sublinear expectation
space $(\Omega_{T},L_{G}^{p}(\Omega_{T}),\mathbb{\hat{E}})$ is well defined.
Now for a given $\xi \in L_{G}^{p}(\Omega_{T})$, we can define,
\begin{equation*}
u(t,\omega)=\mathbb{E}_{t}^{G}[\xi](\omega),
\end{equation*}
which can be defined as a generalized solution of the path dependent
equation (\ref{NG-path}) with terminal condition $u(T,\omega)=\xi(\omega)\in
L_{G}^{p}(\Omega_{T})$.

A very interesting property is that, just like a Brownian motion, the
canonical process $B_{t}(\omega):=\omega(t)$ is still a continuous process
with stationary and independent increments. We call it a $G$-Brownian motion
associated to the function $G$. A new type of stochastic calculus for a $G$%
-It\^{o} process
\begin{equation}  \label{G-Ito}
u(t,\omega)=u(0,\omega)+\int_{0}^{t}\eta(s,\omega)ds+\int_{0}^{t}\zeta
(s,\omega)dB_{s}+\int_{0}^{t}\gamma(s,\omega)d\left \langle B\right \rangle
_{s}
\end{equation}
has been established, where $\left \langle B\right \rangle $ is the
quadratic variation process of $B$, which is still a continuous process with
stationary and independent increments. Here $\eta$ and $\gamma$ (resp. $\zeta$) are
\textquotedblleft adapted\textquotedblright \ processes in the completed
space $M_{G}^{p}(0,T)$ (resp. $H_G^p(0,T)$) under the norm $\left \Vert \cdot \right \Vert
_{M_{G}^{p}}$ (resp. $\left \Vert \cdot \right \Vert
_{H_{G}^{p}}$) mimic to that under classical norms $\left \Vert \cdot \right \Vert
_{M_{P}^{p}}$ (resp. $\left \Vert \cdot \right \Vert
_{H_{P}^{p}}$) but with the classical Wiener expectation $E_{P}[\cdot]$  replaced by
the sublinear expectation $\mathbb{E}^{G}[\cdot]$. Observe that we can also
use the norm $\left \Vert \cdot \right \Vert _{M_{G}^{p}}$ (resp. $\left \Vert \cdot \right \Vert
_{H_{G}^{p}}$) to the derivative processes $%
D_{t}u(t,\omega)$ and $D_{x}^{2}u(t,\omega)$ (resp. $D_{x}u(t,\omega)$) defined in (%
\ref{opera1}) and (\ref{opera2}) and then take the extension under those new
Sobolev norms. This procedure provides us  very interesting $\mathbb{E}^{G}$%
-weighted Sobolev spaces. The corresponding fully nonlinear PPDEs (\ref%
{NG-path}) can be well defined in this new framework.

Surprisingly, this framework is closely related to the $G$-It\^{o} process (%
\ref{G-Ito}). Let us first consider a simple case $u(t,\omega )=\varphi
(t,\omega (t))=\varphi (t,B_{t}(\omega ))$ for a smooth function $\varphi
\in C_{b}^{\infty }(\mathbb{R}_{+}\times \mathbb{R})$, for which the $G$-It%
\^{o}'s formula is
\begin{equation*}
\varphi (t,B_{t})=\varphi (0,0)+\int_{0}^{t}\partial _{s}\varphi
(s,B_{s})ds+\int_{0}^{t}\partial _{x}\varphi (s,B_{s})dB_{s}+\int_{0}^{t}%
\frac{1}{2}\partial _{x}^{2}\varphi (s,B_{s})d\left \langle B\right \rangle
_{s},
\end{equation*}%
or, if we use the derivatives of path functions defined in (\ref{opera1})
and (\ref{opera2})
\begin{equation}
u(t)=u(0)+\int_{0}^{t}D_{s}u(s)ds+\int_{0}^{t}D_{x}u(s)dB_{s}+\int_{0}^{t}%
\frac{1}{2}D_{x}^{2}u(s)d\left \langle B\right \rangle _{s}.
\label{G-Ito-formula}
\end{equation}%
In this paper we will see that, with the about defined weak derivatives, any well defined $G$-It\^{o} process $%
u(t,\omega )$ of the form (\ref{G-Ito}) corresponds uniquely to
\begin{equation*}
\eta (s,\omega )=D_{s}u(s,\omega ),\  \  \zeta (s,\omega )=D_{x}u(s,\omega ),\
\  \gamma (s,\omega )=\frac{1}{2}D_{x}^{2}u(s,\omega ).
\end{equation*}%
This implies that, with this $G$-Sobolev formulation, any $G$-It\^{o}
process $u(t,\omega )$ is in fact a generalized $G$-It\^{o}'s formula. The
above result also confirmed our belief that it is important to clearly
distinguish the $dt$ part and $d\left \langle B\right \rangle _{t}$ part for
a $G$-It\^{o} process (\ref{G-Ito}). This point is now well understood
thanks to Theorem 3.3 of Song (2012), which plays an important
role for the above 1-1 correspondence.

In the linear case $G(a)=\frac{a}{2}$, the space $(\Omega
_{T},L_{G}^{p}(\Omega _{T}),\mathbb{E}^{G})$ coincides with the classical
Wiener probability space $(\Omega ,\mathcal{F},P)$, in the sense that $%
\mathbb{E}^{G}=E_{P}$ and $B$ becomes a standard Brownian motion under the
Wiener probability measure $P$. Under this weaker expectation one cannot
distinguish the process $\left \langle B\right \rangle _{t}$ from $t$, and
thus corresponding to (\ref{G-Ito}), the It\^{o} process becomes%
\begin{equation*}
u(t,\omega )=u(0,\omega )+\int_{0}^{t}\beta (s,\omega )ds+\int_{0}^{t}\zeta
(s,\omega )dB_{s},
\end{equation*}%
where $\beta $ comes from the sum \textquotedblleft $\eta +\gamma $%
\textquotedblright \ (see (\ref{G-Ito})) and we thus have
\begin{equation*}
\beta _{s}=D_{s}u(s,\omega )+\frac{1}{2}D_{x}^{2}u(s,\omega ),\  \zeta
(s,\omega )=D_{x}u(s,\omega ).\
\end{equation*}%
We then see that even within this classical framework of Wiener
probability space, each It\^{o} process provides the corresponding
path dependent It\^{o}'s formula in the corresponding Sobolev space.
We can also formulate the path dependent PDE of the heat equation
\begin{equation}
D_{t}u(t,\omega )+\frac{1}{2}D_{x}^{2}u(t,\omega )=0,\  \  \ (t,\omega )\in
\lbrack 0,T)\times \Omega _{T}.  \label{LG-path}
\end{equation}

In this paper we will formulate several important spaces the under $G$%
-expectation to obtain the well-posedness of certain fully nonlinear PPDEs.
After many years of explorations, some elegant results and powerful tools
were obtained within the $G$-expectation framework. One of the main
objectives of this paper is to establish some important 1-1 correspondence between BSDEs and PPDEs
by which those results can be directly applied to the corresponding problems
of path dependent PDEs.

Observing that when we study the path-derivatives $D_{t}$ and $D_{x}^{(k)}$
in the Sobolev spaces, many \textquotedblleft mysterious\textquotedblright \
phenomena happen. For example for the path process $v(t,\omega
):=\left
\langle B\right \rangle _{t}$ we have%
\begin{equation*}
D_{t}v(t,\omega )\equiv 0,\ D_{x}v(t,\omega )\equiv 0\text{, but }\
D_{x}^{2}v(t,\omega )\equiv 2.
\end{equation*}%
We provide a 1-1 correspondence between a type of backward SDEs driven by $G$%
-Brownian motion
\begin{equation*}
Y_{t}=\xi +\int_{t}^{T}f(s,Y_{s},Z_{s},\eta
_{s})ds-\int_{t}^{T}Z_{s}dB_{s}-(K_{T}-K_{t}),
\end{equation*}%
where $K_{t}=\frac{1}{2}\int_{0}^{t}\eta _{s}d\langle B\rangle
_{s}-\int_{0}^{t}G(\eta _{s})ds$, and the following type of fully nonlinear
path dependent PDEs in the corresponding  $G$-Sobolev space $W_{G}^{1,2;p}(0,T)$
\begin{align}
D_{t}u+G(D_{x}^{2}u)+f(t,u,D_{x}u,D_{x}^{2}u)& =0,\  \ t\in \lbrack 0,T),
\label{int1} \\
u(T,\omega ))& =\xi (\omega ).  \label{int2}
\end{align}

When  $f$ is independent of $D_{x}^{2}u$, we also
formulate a different type of weak solution to the PPDEs (\ref{int1}-\ref%
{int2}) in the  $G$-Sobolev
space $W_{\mathcal{A}_G}^{\frac{1}{2},1;p}(0,T)$, which is an expansion of  $W_{G}^{1,2;p}(0,T)$ with weaker derivatives
\begin{align}
\mathcal{A}_{G}u+f(t,u(t,\omega ),D_{x}u(t,\omega ))& =0,\  \ t\in \lbrack
0,T),  \label{int3} \\
u(T,\omega ))& =\xi (\omega ).  \label{int4}
\end{align}%
This weak formulation corresponds exactly to $G$-BSDEs studied in [HJPS12]. Consequently,
the existence and uniqueness of weak solutions to the path dependent PDEs (\ref {int3}-\ref {int4}) have been obtained via the result of $G$-BSDEs.

The notion of path dependent PDEs was proposed by [Peng2010-2012] and the
nonlinear Feynman-Kac formula for a system of quasilinear path dependent
PDEs via BSDE approach was obtained in [PengWang2011]. A notion of viscosity
solutions of Dupire's type was proposed in [Peng2012]. A new notion of
viscosity solutions was studied by [Ekhen et al2011].

The paper is organized as follows. In section 2, we present some basic
notions and definitions of the related spaces under $G$-expectation. In
section 3 we define a weaker Sobolev space $W_{\mathcal{A}}^{\frac{1}{2},1;p}(0,T)$ in
the Wiener probability space. The Sobolev space $W_{G}^{1,2;p}(0,T)$
weighted by the sublinear $G$-expectation space and the 1-1 correspondence
between BSDEs and PPDEs are studied in section 4. In section 5 we define a
Sobolev space $W_{\mathcal{A}_G}^{\frac{1}{2},1;p}(0,T)$, which is an expansion of  $W_{G}^{1,2;p}(0,T)$ with weaker derivatives. In the space $W_{\mathcal{A}_G}^{\frac{1}{2},1;p}(0,T)$ we formulate the weak solutions to the
PPDEs. The recent results of existence and uniqueness of $G$-BSDEs (see
Appendix) are directly applied to the corresponding path dependent PDEs.

\section{Some definitions and notations}

We review some basic notions and definitions of the related spaces under $G$%
-expectation. The readers may refer to \cite{P07a}, \cite{P07b}, \cite{P08a}%
, \cite{P08b}, \cite{P10} for more details.

Let $\Omega=C_{0}(\mathbb{R}^{+};\mathbb{R}^{d})$ be the space of all $%
\mathbb{R}^{d}$-valued continuous paths $\omega=(\omega(t))_{t\geq0}\in
\Omega$ with $\omega(0)=0$ and let $B_{t}(\omega)=\omega(t)$ be the
canonical process. For $t\in \mathbb{R}^{+}$, we denote
\begin{equation*}
\Omega_{t}=\{(\omega(s\wedge t))_{s\geq0}:\omega \in \Omega \}.
\end{equation*}
Let us recall the definitions of $G$-Brownian motion and its corresponding $%
G $-expectation introduced in [Peng2007]. We are given a linear space of
functions of paths:
\begin{equation*}
L_{ip}(\Omega_{T}):=\{
\varphi(\omega(t_{1}),\cdots,\omega(t_{n})):t_{1},\cdots,t_{n}\in
\lbrack0,T],\  \varphi \in C_{l.Lip}((\mathbb{R}^{d})^{n}),\ n\in \mathbb{N}%
\},
\end{equation*}
where $C_{l.Lip}(\mathbb{R}^{n})$ is the collection of locally Lipschitz
functions on $\mathbb{R}^{n}$.

We are given a function%
\begin{equation*}
G:\mathbb{S}(d)\mapsto \mathbb{R}
\end{equation*}
satisfying the following monotonicity and sublinearity:

\begin{description}
\item[a.] \bigskip$G(a)\geq G(b),\  \ $if $a,b\in \mathbb{S}{\normalsize (d)}$
and $a\geq b;$

\item[b.] $G(a+b)\leq G(a)+G(b),\  \ {\normalsize \ G(\lambda a)=\lambda
G(a),\  \ }$for each $a,b\in \mathbb{S}{\normalsize (d)}$ and $\lambda \geq0.$
\end{description}

\begin{remark}
When $d=1$, we have $G(a):=\frac{1}{2}(\overline{\sigma}^{2}a^{+}-\underline{%
\sigma}^{2}a^{-})$, for $0\leq \underline{\sigma}^{2}\leq \overline{\sigma}%
^{2}$. We are also interested in the linear function $G(a)=a/2$.
\end{remark}

\bigskip \ For each $\xi(\omega)\in L_{ip}(\Omega_{T})$ of the form
\begin{equation*}
\xi(\omega)=\varphi(\omega(t_{1}),\omega(t_{2}),\cdots,\omega(t_{n})),\  \
0=t_{0}<t_{1}<\cdots<t_{n}=T,
\end{equation*}
we define the following $G$-conditional expectation
\begin{equation*}
\mathbb{E}_{t}^{G}[\xi]:=u_{k}(t,\omega(t);\omega(t_{1}),\cdots,\omega
(t_{k-1}))
\end{equation*}
for each $t\in \lbrack t_{k-1},t_{k})$, $k=1,\cdots,n$. Here, for each $%
k=1,\cdots,n$, $u_{k}=u_{k}(t,x;x_{1},\cdots,x_{k-1})$ is a function of $%
(t,x)$ parameterized by $(x_{1},\cdots,x_{k-1})\in \mathbb{R}^{k-1}$, which
is the solution of the following PDE ($G$-heat equation) defined on $%
[t_{k-1},t_{k})\times \mathbb{R}$:
\begin{equation*}
\partial_{t}u_{k}+G(\partial^2_{x}u_{k})=0\
\end{equation*}
with terminal conditions
\begin{equation*}
u_{k}(t_{k},x;x_{1},\cdots,x_{k-1})=u_{k+1}(t_{k},x;x_{1},\cdots x_{k-1},x),
\, \, \hbox{for $k<n$}
\end{equation*}
and $u_{n}(t_{n},x;x_{1},\cdots,x_{n-1})=\varphi (x_{1},\cdots x_{n-1},x)$.

The $G$-expectation of $\xi(\omega)$ is defined by $\mathbb{E}^{G}[\xi]=%
\mathbb{E}_{0}^{G}[\xi]$. From this construction we obtain a natural norm $%
\left \Vert \xi \right \Vert _{L_{G}^{p}}:=\mathbb{E}^{G}[|\xi|^{p}]^{1/p}$.
The completion of $L_{ip}(\Omega_{T})$ under $\left \Vert \cdot \right \Vert
_{L_{G}^{p}}$ is a Banach space, denoted by $L_{G}^{p}(\Omega_{T})$. The
canonical process $B_{t}(\omega):=\omega(t)$, $t\geq0$, is called a $G$%
-Brownian motion in this sublinear expectation space $(\Omega,L_{G}^{p}(%
\Omega ),\mathbb{E}^G)$.

The process $u(t,\omega ):=\mathbb{E}_{t}^{G}[\xi ](\omega )$, $t\in \lbrack
0,T]$, is a $G$-martingale, which is regarded as a typical solution of the
path dependent equation of \textquotedblleft $D_{t}u+G(D_{x}^{2}u)=0$%
\textquotedblright \ with terminal condition $u(T,\omega )=\xi (\omega )$.
In fact, in this formulation, the construction of a \textquotedblleft $G$%
-Sobolev space\textquotedblright \ is already implicitly given.

\begin{definition}
\label{def2.1} (Cylinder function of paths)

A function $\xi:\Omega_{T}\rightarrow \mathbb{R}$ is called a cylinder
function of paths on $[0,T]$ if it can be represented by
\begin{equation*}
\xi(\omega)=\varphi(\omega(t_{1}),\cdot \cdot \cdot,\omega(t_{n})),\,\,\,\omega
\in \Omega_{T},
\end{equation*}
for some $0<t_{1}<\cdot \cdot \cdot<t_{n}\leq T$, where $\varphi:(\mathbb{R}%
^{d})^{n}\rightarrow \mathbb{R}$ is a $C^{\infty}$-function with at most
polynomial growth. We denote by $C^{\infty}(\Omega_{T})$ the collection of
all cylinder functions of paths on $[0,T]$.
\end{definition}

\begin{definition}
\label{def2.2} (Step process)

A function $\eta (t,\omega ):[0,T]\times \Omega _{T}\rightarrow \mathbb{R}$
is called a step process if there exists a time partition $%
\{t_{i}\}_{i=0}^{n}$ with $0=t_{0}<t_{1}<\cdot \cdot \cdot <t_{n}=T$, such
that for each $k=0,1,\cdot \cdot \cdot ,n-1$ and $t\in (t_{k},t_{k+1}]$
\begin{equation*}
\eta (t,\omega )=\varphi _{k}(\omega (t_{1}),\cdot \cdot \cdot ,\omega
(t_{k})).
\end{equation*}%
Here $\varphi _{k}(\omega (t_{1}),\cdot \cdot \cdot ,\omega (t_{k}))$ is a
bounded cylinder function of paths on $[0,T]$. We denote by $M^{0}(0,T)$ the
collection of all step processes.
\end{definition}

\begin{definition}
(Cylinder process of paths) \label{def3.1} A function $u(t,\omega
):[0,T]\times \Omega _{T}\rightarrow \mathbb{R}$ is called a cylinder path
process if there exists a time partition $\{t_{i}\}_{i=0}^{n}$ with $%
0=t_{0}<t_{1}<\cdot \cdot \cdot <t_{n}=T$, such that for each $k=0,1,\cdot
\cdot \cdot ,n-1$ and $t\in (t_{k},t_{k+1}]$
\begin{equation*}
u(t,\omega )=u_{k}(t,\omega (t);\omega (t_{1}),\cdot \cdot \cdot ,\omega
(t_{k})).
\end{equation*}%
Here for each $k$, the function $u_{k}:[t_{k},t_{k+1}]\times (\mathbb{R}%
^{d})^{(k+1)}\rightarrow \mathbb{R}$ is a $C^{\infty }$-function with
\begin{equation*}
u_{k}(t_{k},x;x_{1},\cdot \cdot \cdot
,x_{k-1},x)=u_{k-1}(t_{k},x;x_{1},\cdot \cdot \cdot ,x_{k-1})
\end{equation*}%
such that, all derivatives of $u_{k}$ have at most polynomial growth. We
denote by ${\mathcal{C}}^{\infty }(0,T)$ for the collection of all cylinder
path processes.
\end{definition}

The following proposition is easy.

\begin{proposition}
\label{prop3.2} Let $\eta,\zeta$ be step processes. Then
\begin{equation*}
u(t,\omega):=\int_{0}^{t}\eta(s,\omega)ds+\int_{0}^{t}\zeta(s,\omega)dB_{s}
\end{equation*}
belongs to ${\mathcal{C}}^{\infty}(0,T)$.
\end{proposition}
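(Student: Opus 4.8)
The plan is to produce the cylinder representation of $u$ by a direct computation. First I would pass to a common refinement $\{t_i\}_{i=0}^{n}$ of the two time partitions attached to $\eta$ and $\zeta$, so that on each block $(t_j,t_{j+1}]$ both processes are simultaneously of the form $\eta(s,\omega)=\varphi_j^\eta(\omega(t_1),\ldots,\omega(t_j))$ and $\zeta(s,\omega)=\varphi_j^\zeta(\omega(t_1),\ldots,\omega(t_j))$, i.e. constant in the time variable $s$ and depending only on the finitely many past values $\omega(t_1),\ldots,\omega(t_j)$.

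Next, fix $k$ and $t\in(t_k,t_{k+1}]$ and split each of the two integrals along the partition points. Since $\eta$ is constant in $s$ on each block, the Lebesgue integral over $(t_j,t_{j+1}]$ equals $(t_{j+1}-t_j)\varphi_j^\eta(\cdots)$, and over the final incomplete block it equals $(t-t_k)\varphi_k^\eta(\cdots)$. Since $\zeta$ is likewise piecewise constant in time, its $G$-It\^o integral reduces to the elementary form, namely multiplication by the increment of $B$: $\int_{t_j}^{t_{j+1}}\zeta\,dB_s=\varphi_j^\zeta(\cdots)(\omega(t_{j+1})-\omega(t_j))$, and analogously $\varphi_k^\zeta(\cdots)(\omega(t)-\omega(t_k))$ over the last block. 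Summing these contributions and then replacing $\omega(t_j)$ by a free variable $x_j$ and $\omega(t)$ by $x$ produces the explicit candidate
\begin{align*}
u_k(t,x;x_1,\ldots,x_k) &= \sum_{j=0}^{k-1}(t_{j+1}-t_j)\varphi_j^\eta(x_1,\ldots,x_j) + (t-t_k)\varphi_k^\eta(x_1,\ldots,x_k) \\
&\quad + \sum_{j=0}^{k-1}\varphi_j^\zeta(x_1,\ldots,x_j)(x_{j+1}-x_j) + \varphi_k^\zeta(x_1,\ldots,x_k)(x-x_k),
\end{align*}
so that $u(t,\omega)=u_k(t,\omega(t);\omega(t_1),\ldots,\omega(t_k))$ on $(t_k,t_{k+1}]$.

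It then remains to check the two structural requirements of Definition \ref{def3.1}. Smoothness and polynomial growth of all derivatives of $u_k$ follow because each $\varphi_j^\eta,\varphi_j^\zeta$ is a smooth cylinder function (whose derivatives are of polynomial growth) and the extra factors $x_{j+1}-x_j$ and $t-t_k$ are polynomial, so differentiating $u_k$ yields finite sums of products of polynomial factors with derivatives of the $\varphi_j$'s. For the matching condition at the junction I would evaluate the candidate at $t=t_k$ with the last parameter equal to the current position, i.e. compute $u_k(t_k,x;x_1,\ldots,x_{k-1},x)$: the two terms carrying $\varphi_k^\eta$ and $\varphi_k^\zeta$ vanish (the first because $t-t_k=0$, the second because $x-x_k=x-x=0$), and the surviving terms collapse exactly onto $u_{k-1}(t_k,x;x_1,\ldots,x_{k-1})$.

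The computation is elementary; the only points requiring care are the reduction of the stochastic integral of the step process $\zeta$ to the increment-multiplication form, which is precisely the defining property of the elementary $G$-It\^o integral, and the bookkeeping at the junction times that guarantees continuity of the glued pieces. I expect the verification of the junction condition to be the main place where one must be attentive, since it is what distinguishes a genuine cylinder path process from an arbitrary piecewise-defined family.
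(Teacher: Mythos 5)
Your proof is correct and is precisely the direct verification the paper has in mind: the paper itself offers no argument (it merely calls the proposition easy), and your computation---pass to a common refinement, evaluate the Lebesgue and stochastic integrals block by block using the elementary definition of the It\^{o} integral for step processes, then check smoothness, polynomial growth, and the junction condition $u_k(t_k,x;x_1,\ldots,x_{k-1},x)=u_{k-1}(t_k,x;x_1,\ldots,x_{k-1})$---is the canonical way to fill in that gap. The only cosmetic point is that in the increments $x_{j+1}-x_j$ one should read $x_0=\omega(0)=0$ for the $j=0$ term.
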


It is clear that ${\mathcal{C}}^{\infty}(0,T)\subset{\mathcal{C}}^{\infty
}(0,\bar{T})$ for $\bar{T}\geq T$. We also set
\begin{equation*}
{\mathcal{C}}^{\infty}(0,\infty):=\bigcup_{n=1}^{\infty}{\mathcal{C}}%
^{\infty }(0,n).
\end{equation*}
For $t\in \lbrack t_{k},t_{k+1})$, $n\in \mathbb{N}$, we denote
\begin{equation*}
D_{t}^{(n)}u(t,\omega):=\partial_{t+}^{(n)}u_{k}(t,x;x_{1},\cdot \cdot
\cdot,x_{k})|_{x=\omega(t),x_{1}=\omega(t_{1}),\cdot \cdot
\cdot,x_{k}=\omega(t_{k})}.
\end{equation*}
We denote $D_{t}=D_{t}^{(1)}$ for simplicity.

For $t\in (t_{k},t_{k+1}]$, we denote
\begin{align}
D_{x}u(t,\omega ):=& \partial _{x}u_{k}(t,x;x_{1},\cdot \cdot \cdot
,x_{k})|_{x=\omega (t),x_{1}=\omega (t_{1}),\cdot \cdot \cdot ,x_{k}=\omega
(t_{k})},  \label{Dx} \\
D_{x}^{2}u(t,\omega ):=& \partial _{x}^{2}u_{k}(t,x;x_{1},\cdot \cdot \cdot
,x_{k})|_{x=\omega (t),x_{1}=\omega (t_{1}),\cdot \cdot \cdot ,x_{k}=\omega
(t_{k})}, \\
\Delta _{x}u(t,\omega ):=& \mathrm{tr[}D_{x}^{2}u(t,\omega )\mathrm{]}.
\label{Lap}
\end{align}%
Let us indicate the relation of $D_{x}u$ with the well-known Malliavin
calculus. Let $\mathbb{D}$ be the Malliavin derivative operator. Then for
each $u\in \mathcal{C}^{\infty }(0,T)$, we have
\begin{equation*}
\mathbb{D}_{t}u_{t}(\omega )=D_{x}u(t,\omega ).
\end{equation*}%
But as in the above explanation, since the notion of $D_{x}u(t,\omega )$
corresponds much more like the classical derivative of $D_{x}u(t,x)$,
emphasizing simply the perturbation of state point than the Malliavin's one
emphasizing the perturbation of the whole path, thus we prefer to use the
denotation $D_{x}u(t,\omega )$. Another important convenience is that, with
this notation, the path PDEs discussed in this paper can be easily related
to the corresponding classical PDEs of parabolic types.

In fact, the above definition of derivatives corresponds perfectly with
Dupire's one, introduced originally in his deep insightful paper (2009) (see
also [CF2010]). An advantage of our new formulation in this paper is that we
do not need to define of our derivatives on a larger space of right
continuous paths with left limit. Our weak formulation is not only general
enough but also necessary to treat almost all existing results of stochastic
calculus, within the classical as well as within $G$-frameworks, into the
corresponding path dependent PDEs.

In the sequel, we shall give the definitions of $G$-Sobolev spaces. For
readers' convenience, we divide the discussions into two parts. First we
consider this problem in the framework of the classical Wiener probability
space, which presents a completely new point of view of It\^{o} processes.

In this paper, we fix a number $p>1$. For a step process $\eta \in
M^{0}(0,T) $, we set the norm
\begin{equation*}
\Vert \eta \Vert _{H_{G}^{p}}^{p}:=\mathbb{E}^{G}[\{ \int_{0}^{T}|\eta
_{s}|^{2}ds\}^{p/2}],\  \  \  \Vert \eta \Vert _{M_{G}^{p}}^{p}:=\mathbb{E}%
^{G}[\int_{0}^{T}|\eta _{s}|^{p}ds]\  \
\end{equation*}
and denote by $H_{G}^{p}(0,T)$ and $M_{G}^{p}(0,T)$ the completion of $%
M^{0}(0,T)$ with respect to the norms $\Vert \cdot \Vert _{H_{G}^{p}}$ and $%
\Vert \cdot \Vert _{M_{G}^{p}}$, respectively.

\section{Sobolev spaces on path space under Wiener expectation}

\subsection{ $P$-Sobolev spaces of path functions}

For the case $G(A)=\frac{1}{2}$tr$[A]$, the above $G$-expectation is just
the expectation $E_{P}$ of the Wiener probability $P$ and the $G$-Brownian
motion $B$ becomes the Wiener process. In the Wiener probability space we
denote the corresponding norms and spaces by
\begin{align*}
\Vert \cdot \Vert _{L_{P}^{p}}& :=\Vert \cdot \Vert _{L_{G}^{p}},\  \
L_{P}^{p}(\Omega _{T}):=L_{G}^{p}(\Omega _{T}),\  \  \\
\Vert \cdot \Vert _{H_{P}^{p}}& :=\Vert \cdot \Vert _{H_{G}^{p}},\  \
H_{P}^{p}(0,T):=H_{G}^{p}(0,T),\ \  \\
\Vert \cdot \Vert _{M_{P}^{p}}& :=\Vert \cdot \Vert _{M_{G}^{p}},\  \
M_{P}^{p}(0,T):=M_{G}^{p}(0,T).\
\end{align*}

In this case each cylinder process $u\in {\mathcal{C}}^{\infty }(0,\infty )$
has the following decomposition:

\begin{proposition}
\label{prop3.3} For each given $u\in{\mathcal{C}}^{\infty}(0,\infty)$ we
have
\begin{equation*}
u(t,\omega)=u(0,\omega)+\int_{0}^{t}\mathcal{A}u(s,\omega)ds+%
\int_{0}^{t}D_{x}u(s,\omega)dB_{s},
\end{equation*}
where
\begin{equation*}
\mathcal{A}u(s,\omega):=(D_{s}+\frac{1}{2}\Delta_{x})u(s,\omega)=D_{s}u(s,%
\omega)+\frac{1}{2}\Delta_{x}u(s,\omega).
\end{equation*}
\end{proposition}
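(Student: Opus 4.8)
The plan is to apply the classical multidimensional It\^{o} formula on each subinterval of the defining time partition and then to telescope the boundary terms using the consistency condition built into Definition \ref{def3.1}. Fix $t\in(t_m,t_{m+1}]$. On each interval $(t_k,t_{k+1}]$ with $k\leq m$ (and on the final partial piece $(t_m,t]$) the process is $u(s,\omega)=u_k(s,B_s;B_{t_1},\cdots,B_{t_k})$, where the parameter values $B_{t_1},\cdots,B_{t_k}$ are frozen, i.e.\ $\mathcal{F}_{t_k}$-measurable and constant in $s$ across the interval. I would therefore apply It\^{o}'s formula to $u_k$ viewed as a function of the single moving argument $x=B_s$, the parameter arguments $x_1,\cdots,x_k$ having zero differential over the interval (equivalently, one applies the full $(k+1)$-dimensional It\^{o} formula in which the first $k$ coordinate processes are constant, or one conditions on $\mathcal{F}_{t_k}$).

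Since we are in the Wiener case $G(A)=\frac{1}{2}\mathrm{tr}[A]$, the canonical process $B$ is a standard $d$-dimensional Brownian motion with $d\langle B^i,B^j\rangle_s=\delta_{ij}\,ds$, so the second-order term collapses to $\frac{1}{2}\mathrm{tr}[\partial_x^2 u_k]\,ds=\frac{1}{2}\Delta_x u(s,\omega)\,ds$. Recalling the definitions (\ref{Dx})--(\ref{Lap}) of $D_x u$, $D_x^2 u$, $\Delta_x u$ together with $D_s u$, the It\^{o} formula on $(t_k,t_{k+1}]$ reads
\[
du(s,\omega)=\bigl(D_s u(s,\omega)+\tfrac{1}{2}\Delta_x u(s,\omega)\bigr)\,ds+D_x u(s,\omega)\,dB_s=\mathcal{A}u(s,\omega)\,ds+D_x u(s,\omega)\,dB_s,
\]
and integrating over $(t_k,t_{k+1}]$ produces the increment of $u$ across that interval in precisely the desired form. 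Summing these increments over $k=0,\dots,m-1$ and adding the contribution of $(t_m,t]$, the boundary terms telescope: the value of $u_k$ at its right endpoint $t_{k+1}$ equals the value of $u_{k+1}$ at its left endpoint $t_{k+1}$, by the matching condition $u_{k+1}(t_{k+1},x;x_1,\cdots,x_k,x)=u_k(t_{k+1},x;x_1,\cdots,x_k)$ of Definition \ref{def3.1} evaluated at $x=\omega(t_{k+1})$. Hence all intermediate endpoint contributions cancel and only $u(0,\omega)$ and $u(t,\omega)$ survive, giving the asserted decomposition.

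The only routine verifications are the integrability of the two integrands, which follow from the requirement in Definition \ref{def3.1} that every derivative of each $u_k$ has at most polynomial growth, together with the fact that $B$ has moments of all orders under $P$; this makes both the Lebesgue and the It\^{o} integral well defined (indeed Proposition \ref{prop3.2} already records that such integrals land back in $\mathcal{C}^{\infty}$). I expect the one genuine point requiring care to be the telescoping step, namely checking that the consistency condition really renders $u$ continuous across each $t_k$ so that no spurious jump terms appear; this is the crux of the argument, but it is handed to us directly by the definition of a cylinder path process.
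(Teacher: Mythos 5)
Your proof is correct and coincides with the paper's (implicit) argument: the paper states this proposition without a written proof, treating it as an immediate consequence of the classical It\^{o} formula applied to $u_k(s,B_s;B_{t_1},\dots,B_{t_k})$ on each partition interval, with the consistency condition of Definition \ref{def3.1} gluing the increments together---exactly the piecewise-It\^{o}-plus-telescoping argument you spell out. Your attention to the matching condition at the partition points and to integrability (polynomial growth of the derivatives plus Gaussian moments) fills in precisely the routine details the paper leaves unsaid.
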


\begin{definition}
\label{def3.4} 1) For a process $u\in {\mathcal{C}}^{\infty }(0,T)$, set
\begin{equation*}
\Vert u\Vert _{S_{P}^{p}}^{p}=E_{P}[\sup_{s\in \lbrack 0,T]}|u_{s}|^{p}].
\end{equation*}%
Denote by $S_{P}^{2}(0,T)$ the completion of $u\in {\mathcal{C}}^{\infty
}(0,T)$ w.r.t. the norm $\Vert \cdot \Vert _{S_{P}^{2}}$.

2) For $u\in {\mathcal{C}}^{\infty }(0,T)$, set
\begin{equation*}
\Vert u\Vert _{W_{\mathcal{A}}^{\frac{1}{2},1;p}}^{p}=E_{P}[\sup_{s\in \lbrack
0,T]}|u_{s}|^{p}+\int_{0}^{T}|{\mathcal{A}}u_{s}|^{p}ds+\{%
\int_{0}^{T}|D_{x}u_{s}|^{2}ds\}^{p/2}].
\end{equation*}
\end{definition}

\begin{proposition}
\label{prop3.5} The norm $\Vert \cdot \Vert _{W_{\mathcal{A}}^{\frac{1}{2},1;p}}$ is
closable in the space $S_{P}^{p}(0,T)$ in the following sense: Let $u^{n}\in
{\mathcal{C}}^{\infty }(0,T)$ be a Cauchy sequence w.r.t. the norm $\Vert
\cdot \Vert _{W_{\mathcal{A}}^{\frac{1}{2},1;p}}$. If $\Vert u^{n}\Vert
_{S_{P}^{p}}\rightarrow 0$, we have $\Vert u^{n}\Vert _{\mathcal{W}%
_{P}^{1,2;p}}\rightarrow 0$.
\end{proposition}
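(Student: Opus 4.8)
The plan is to exploit the canonical It\^o decomposition of cylinder processes supplied by Proposition \ref{prop3.3}, combined with the uniqueness of the decomposition of a continuous semimartingale into its finite-variation and martingale parts. First I would observe that the norm splits cleanly into its three constituents,
\[
\|u\|_{W_{\mathcal{A}}^{\frac{1}{2},1;p}}^p = \|u\|_{S_P^p}^p + \|\mathcal{A}u\|_{M_P^p}^p + \|D_x u\|_{H_P^p}^p,
\]
so that a sequence $u^n\in{\mathcal C}^\infty(0,T)$ Cauchy in $\|\cdot\|_{W_{\mathcal{A}}^{\frac{1}{2},1;p}}$ is simultaneously Cauchy in each of the three complete spaces $S_P^p(0,T)$, $M_P^p(0,T)$ and $H_P^p(0,T)$. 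By completeness there exist limits $\beta\in M_P^p(0,T)$ and $\zeta\in H_P^p(0,T)$ with $\mathcal{A}u^n\to\beta$ in $M_P^p$ and $D_x u^n\to\zeta$ in $H_P^p$, while the hypothesis $\|u^n\|_{S_P^p}\to 0$ says the $S_P^p$-limit is $0$. The whole statement then reduces to showing $\beta=0$ and $\zeta=0$.

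The second step is to pass to the limit in the representation of Proposition \ref{prop3.3},
\[
u^n(t,\omega)=u^n(0)+\int_0^t \mathcal{A}u^n(s,\omega)\,ds+\int_0^t D_x u^n(s,\omega)\,dB_s .
\]
The constant $u^n(0)=u^n_0(0,0)$ tends to $0$ since it is dominated by $\sup_s|u^n_s|$, which goes to $0$ in $L^p$. The drift term converges in $S_P^p$: by Jensen's inequality $E_P[\sup_t|\int_0^t(\mathcal{A}u^n-\beta)\,ds|^p]\le T^{p-1}E_P[\int_0^T|\mathcal{A}u^n-\beta|^p\,ds]\to 0$. For the stochastic integral I would apply the Burkholder-Davis-Gundy inequality, which yields $E_P[\sup_t|\int_0^t(D_x u^n-\zeta)\,dB_s|^p]\le C_p\,E_P[\{\int_0^T|D_x u^n-\zeta|^2\,ds\}^{p/2}]\to 0$; this is precisely where the $H_P^p$-convergence of $D_x u^n$ is the right hypothesis. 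Letting $n\to\infty$ and using that the left-hand side tends to $0$ in $S_P^p$, one obtains, for all $t\in[0,T]$ almost surely,
\[
0=\int_0^t \beta(s,\omega)\,ds+\int_0^t \zeta(s,\omega)\,dB_s .
\]

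The final step separates the two terms by uniqueness of the semimartingale decomposition. The process $\int_0^\cdot\zeta\,dB$ is a continuous local martingale and $\int_0^\cdot\beta\,ds$ has finite variation; since their sum vanishes identically, the cleanest route is to compute quadratic variations, giving $\int_0^t|\zeta|^2\,ds=\langle\int_0^\cdot\zeta\,dB\rangle_t=0$ for every $t$, hence $\zeta=0$ in $H_P^p$, after which $\int_0^t\beta\,ds\equiv 0$ forces $\beta=0$ in $M_P^p$. All three pieces of $\|u^n\|_{W_{\mathcal{A}}^{\frac{1}{2},1;p}}$ therefore tend to $0$, which is the desired closability. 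I expect the main obstacle to be the rigorous limit-exchange in the stochastic integral: one must ensure the BDG constant $C_p$ is independent of $n$ and that $H_P^p$-convergence genuinely controls the $p$-th moment of the supremum of the It\^o integral, since it is this estimate that links the abstract $H_P^p$-limit $\zeta$ to the actual limiting process; the subsequent uniqueness-of-decomposition argument is then routine.
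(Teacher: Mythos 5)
Your proof is correct and is essentially the paper's own argument written out in full: the paper's one-line proof simply invokes the uniqueness of the decomposition for classical It\^{o} processes, and your limit passage (Jensen for the drift, BDG for the stochastic integral) followed by the quadratic-variation argument is exactly the detailed content behind that invocation. No gaps; the BDG step you flag as the main obstacle is standard for the fixed $p>1$ and poses no difficulty.
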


\begin{proof}
The proposition follows directly from the uniqueness of the decomposition
for the classical It\^{o} processes.
\end{proof}

\begin{definition}
We denote by $W_{\mathcal{A}}^{\frac{1}{2},1;p}(0,T)$ the completion of ${\mathcal{C}}%
^{\infty }(0,T)$ w.r.t. the norm $\Vert \cdot \Vert _{\mathcal{W}%
_{P}^{1,2;p}}$. From the above proposition, $W_{\mathcal{A}}^{\frac{1}{2},1;p}(0,T)$
can be considered as a subspace of $S_{P}^{p}(0,T)$. Now, the operators $%
D_{x}$ and $\mathcal{A}$, defined respectively in (\ref{Dx}) and Proposition %
\ref{prop3.3}, can be continuously extended to the space $W%
_{P}^{1,2;p}(0,T)$:
\begin{align*}
D_{x}& :\  W_{\mathcal{A}}^{\frac{1}{2},1;p}(0,T)\mapsto H_{P}^{p}(0,T), \\
\mathcal{A}& :\  W_{\mathcal{A}}^{\frac{1}{2},1;p}(0,T)\mapsto M_{P}^{p}(0,T).
\end{align*}
\end{definition}

The following proposition presents a completely new point of view for
classical It\^{o} processes.

\begin{proposition}
\label{prop3.6}Assume $u\in S_{P}^{p}(0,T)$. Then the following two
conditions are equivalent:

(i) $u\in W_{\mathcal{A}}^{\frac{1}{2},1;p}(0,T)$;

(ii) $u(t,\omega )=u(0,\omega )+\int_{0}^{t}\eta (s,\omega
)ds+\int_{0}^{t}v(s,\omega )dB_{s}$ with $\eta\in M_{P}^{p}(0,T)$ and $v\in H_{P}^{p}(0,T)$.

Moreover, we have
\begin{equation*}
(D_{t}+\frac{1}{2}\Delta_{x}{\normalsize )}u(t,\omega)=\eta(t,\omega ),\
D_{x}u(t,\omega)=v(t,\omega).
\end{equation*}
\end{proposition}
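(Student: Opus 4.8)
The plan is to prove both implications by exploiting the uniqueness of the classical It\^{o} (semimartingale) decomposition together with Propositions \ref{prop3.2}, \ref{prop3.3} and the explicit form of the norm $\Vert \cdot \Vert _{W_{\mathcal{A}}^{\frac{1}{2},1;p}}$. The guiding observation is that for a cylinder process built as a drift plus a stochastic integral, the operators $\mathcal{A}$ and $D_{x}$ recover exactly the two integrands, so the $W_{\mathcal{A}}^{\frac{1}{2},1;p}$-norm is equivalent to controlling the drift in $M_{P}^{p}$, the diffusion in $H_{P}^{p}$, and the trajectory in $S_{P}^{p}$.

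For the direction (i) $\Rightarrow$ (ii): suppose $u\in W_{\mathcal{A}}^{\frac{1}{2},1;p}(0,T)$. By definition there is a sequence $u^{n}\in {\mathcal{C}}^{\infty }(0,T)$, Cauchy for $\Vert \cdot \Vert _{W_{\mathcal{A}}^{\frac{1}{2},1;p}}$, with $u^{n}\rightarrow u$ in $S_{P}^{p}(0,T)$. Inspecting the norm, the Cauchy property forces $\mathcal{A}u^{n}$ to be Cauchy in $M_{P}^{p}(0,T)$ and $D_{x}u^{n}$ to be Cauchy in $H_{P}^{p}(0,T)$; call their limits $\eta $ and $v$. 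Applying Proposition \ref{prop3.3} to each $u^{n}$ gives $u^{n}(t,\omega )=u^{n}(0)+\int_{0}^{t}\mathcal{A}u^{n}\,ds+\int_{0}^{t}D_{x}u^{n}\,dB_{s}$. I would then pass to the limit term by term: the drift integral converges uniformly in $t$ in $L^{p}$ by H\"{o}lder's inequality, and the stochastic integral converges in $S_{P}^{p}$ by the Burkholder--Davis--Gundy inequality (here the hypothesis $p>1$ is used). Since $u^{n}\rightarrow u$ in $S_{P}^{p}$, the limit yields the decomposition (ii), and the identities $\mathcal{A}u=\eta $, $D_{x}u=v$ are precisely the continuous extensions of the operators already recorded after Definition \ref{def3.4}.

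For the converse (ii) $\Rightarrow$ (i): given the decomposition with $\eta \in M_{P}^{p}(0,T)$ and $v\in H_{P}^{p}(0,T)$, I would approximate $\eta $ by step processes $\eta ^{n}\in M^{0}(0,T)$ in $M_{P}^{p}$ and $v$ by step processes $v^{n}\in M^{0}(0,T)$ in $H_{P}^{p}$, which is possible by the very definition of these completions, while the initial value $u(0)$ is deterministic and hence trivially approximated. Set $u^{n}:=u^{n}(0)+\int_{0}^{t}\eta ^{n}\,ds+\int_{0}^{t}v^{n}\,dB_{s}$, which belongs to ${\mathcal{C}}^{\infty }(0,T)$ by Proposition \ref{prop3.2}. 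The crucial point is that, by Proposition \ref{prop3.3} combined with the uniqueness of the semimartingale decomposition, one must have $\mathcal{A}u^{n}=\eta ^{n}$ and $D_{x}u^{n}=v^{n}$. Consequently the $\mathcal{A}$- and $D_{x}$-contributions to $\Vert u^{n}-u\Vert _{W_{\mathcal{A}}^{\frac{1}{2},1;p}}$ reduce to $\Vert \eta ^{n}-\eta \Vert _{M_{P}^{p}}$ and $\Vert v^{n}-v\Vert _{H_{P}^{p}}$, which vanish by construction, while the $\sup $-contribution is again controlled by Burkholder--Davis--Gundy. Thus $u^{n}\rightarrow u$ in $\Vert \cdot \Vert _{W_{\mathcal{A}}^{\frac{1}{2},1;p}}$, proving $u\in W_{\mathcal{A}}^{\frac{1}{2},1;p}(0,T)$.

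The main obstacle I anticipate is the identification $\mathcal{A}u^{n}=\eta ^{n}$, $D_{x}u^{n}=v^{n}$ for the constructed cylinder processes: this is where one must invoke the uniqueness of the It\^{o} decomposition to guarantee that the operators $\mathcal{A}$ and $D_{x}$, defined pointwise through the Dupire-type state derivatives, actually recover the prescribed integrands rather than some other representation. Once this identification is secured, both directions reduce to routine passage to the limit, the only genuine analytic input being the BDG inequality, valid since $p>1$, which converts control of the integrands into control of the supremum norm.
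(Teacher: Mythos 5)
Your proposal is correct and follows essentially the same route as the paper: for (ii)$\Rightarrow$(i) you construct cylinder approximants $u^{n}$ from step-process approximations of $\eta$ and $v$ via Proposition \ref{prop3.2}, identify $\mathcal{A}u^{n}=\eta^{n}$, $D_{x}u^{n}=v^{n}$ through Proposition \ref{prop3.3} plus uniqueness of the It\^{o} decomposition, and conclude convergence in the $W_{\mathcal{A}}^{\frac{1}{2},1;p}$-norm, which is exactly the paper's argument. The only difference is that you spell out the (i)$\Rightarrow$(ii) direction (limit passage via H\"{o}lder and Burkholder--Davis--Gundy), which the paper dismisses as obvious.
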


\begin{proof}
(i)$\implies $(ii) is obvious. Let's prove (ii)$\implies $(i). Choose step
processes $\eta ^{n},v^{n}$ such that $\Vert \eta ^{n}-\eta \Vert
_{M_{P}^{p}}\rightarrow 0$ and $\Vert v^{n}-v\Vert _{H_{P}^{p}}\rightarrow 0$%
. Set
\begin{equation*}
u^{n}(t,\omega ):=u(\omega _{0})+\int_{0}^{t}\eta ^{n}(s,\omega
)ds+\int_{0}^{t}v^{n}(s,\omega )dB_{s}.
\end{equation*}%
Clearly $u^{n}$ belongs to ${\mathcal{C}}^{\infty }(0,T)$ by Proposition \ref%
{prop3.2}. By Proposition \ref{prop3.3} and the uniqueness of the
decomposition for It\^{o} processes, we have
\begin{equation*}
(D_{t}+\frac{1}{2}\Delta _{x}{\normalsize )}u^{n}(t,\omega )=\eta
^{n}(t,\omega ),\ D_{x}u^{n}(t,\omega )=v^{n}(t,\omega ).
\end{equation*}%
So $u$ belongs to $W_{\mathcal{A}}^{\frac{1}{2},1;p}(0,T)$ with
\begin{equation*}
(D_{t}+\frac{1}{2}\Delta _{x}{\normalsize )}u(t,\omega )=\eta (t,\omega ),\
D_{x}u(t,\omega )=v(t,\omega ).
\end{equation*}
\end{proof}

\subsection{Backward SDEs in Wiener space and related PPDEs}

Recall that a classical backward SDE is defined on a Wiener probability
space $(\Omega ,\mathcal{F},P)$: to find a pair of processes $(Y,Z)\in
S_{P}^{p}(0,T)\times H_{P}^{p}(0,T)$ such that
\begin{equation}
Y_{t}=\xi +\int_{t}^{T}f(s,\omega ,Y_{s},Z_{s})ds-\int_{t}^{T}Z_{s}dB_{s},
\label{BSDE}
\end{equation}%
where $f:[0,T]\times \Omega \times \mathbb{R}^{n}\times \mathbb{R}^{n\times
d}\mapsto R^n$ is a given function and $\xi :\Omega
\mapsto \mathbb{R}^{n}$ is a given $\mathcal{F}_{T}$-measurable random
vector.

We shall show that the well-poseness of backward SDE (\ref{BSDE}) is
equivalent to that of the path dependent PDE: to find $u\in \mathcal{W}%
_{P}^{1,2;p}(0,T)$ such that
\begin{align}
(D_{t}+\frac{1}{2}\Delta _{x}{\normalsize )}u(t,\omega )+f(t,u(t,\omega
),D_{x}u(t,\omega ))& =0,\  \ t\in \lbrack 0,T),  \label{PPDE1} \\
u(T,\omega )& =\xi (\omega ).  \label{PPDE2}
\end{align}

\textbf{Assumption 1}. $f(t,\omega ,Y_{t},Z_{t})\in M_{P}^{p}(0,T)$ for any $%
(Y,Z)\in S_{P}^{p}(0,T)\times H_{P}^{p}(0,T)$.

\begin{theorem}
\label{thm4.1} Let $(Y,Z)$ be a solution to the backward SDE (\ref{BSDE}).
Then we have $u(t,\omega ):=Y_{t}(\omega )\in W_{\mathcal{A}}^{\frac{1}{2},1;p}(0,T)$
with $D_{x}u(t,\omega )=Z_{t}(\omega )$.

Moreover, given $u(t,\omega )\in W_{\mathcal{A}}^{\frac{1}{2},1;p}(0,T)$, the
following (i) and (ii) are equivalent:

(i) $(u, D_{x} u)$ is a solution to the backward SDE (\ref{BSDE});

(ii) $u$ is a solution to the path dependent PDE (\ref{PPDE1}-\ref{PPDE2}).%
\newline
\end{theorem}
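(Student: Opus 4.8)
The plan is to reduce everything to Proposition \ref{prop3.6}, which characterizes membership in $W_{\mathcal{A}}^{\frac{1}{2},1;p}(0,T)$ through the It\^o decomposition and identifies the operators $D_t + \frac{1}{2}\Delta_x$ and $D_x$ with the drift and diffusion coefficients. The key observation is that the backward equation (\ref{BSDE}) and the forward It\^o form of Proposition \ref{prop3.6}(ii) are two ways of writing the same process, so the whole theorem is essentially a matter of matching coefficients and invoking uniqueness of the decomposition.

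For the opening claim, I would start from a solution $(Y,Z) \in S_P^p(0,T) \times H_P^p(0,T)$ of (\ref{BSDE}). Evaluating (\ref{BSDE}) at $t = 0$ and subtracting yields the forward representation
\begin{equation*}
Y_t = Y_0 - \int_0^t f(s,\omega,Y_s,Z_s)\,ds + \int_0^t Z_s\,dB_s.
\end{equation*}
By Assumption 1, $f(\cdot,\omega,Y,Z) \in M_P^p(0,T)$, so with $\eta := -f(\cdot,\omega,Y,Z) \in M_P^p(0,T)$ and $v := Z \in H_P^p(0,T)$ this is exactly the representation in Proposition \ref{prop3.6}(ii). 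The implication (ii)$\implies$(i) of that proposition then gives $u := Y \in W_{\mathcal{A}}^{\frac{1}{2},1;p}(0,T)$, together with the identifications $(D_t + \frac{1}{2}\Delta_x)u = \eta = -f(\cdot,u,D_x u)$ and $D_x u = v = Z$, which is the asserted $D_x u = Z$.

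For the equivalence, suppose $u \in W_{\mathcal{A}}^{\frac{1}{2},1;p}(0,T)$ is given; then $u \in S_P^p(0,T)$, $D_x u \in H_P^p(0,T)$, and by Proposition \ref{prop3.6} we already have the decomposition
\begin{equation*}
u_t = u_0 + \int_0^t (D_s + \tfrac{1}{2}\Delta_x)u_s\,ds + \int_0^t D_x u_s\,dB_s.
\end{equation*}
To see (i)$\implies$(ii), I would assume $(u, D_x u)$ solves (\ref{BSDE}), rewrite it in forward form as above, and invoke uniqueness of the It\^o decomposition to match the two drifts, obtaining $(D_t + \frac{1}{2}\Delta_x)u = -f(t,u,D_x u)$, i.e.\ (\ref{PPDE1}); the terminal value $Y_T = \xi$ in (\ref{BSDE}) gives (\ref{PPDE2}). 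Conversely, for (ii)$\implies$(i), I would substitute the PPDE relation $(D_t + \frac{1}{2}\Delta_x)u = -f(t,u,D_x u)$ into the decomposition above and rearrange from the forward to the backward form over $[t,T]$, using the terminal condition $u(T,\omega) = \xi(\omega)$, to recover (\ref{BSDE}) for $(Y,Z) = (u, D_x u)$; the required integrability $(u, D_x u) \in S_P^p(0,T) \times H_P^p(0,T)$ is built into $u \in W_{\mathcal{A}}^{\frac{1}{2},1;p}(0,T)$.

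The genuine content, and the only place where anything beyond bookkeeping happens, is the appeal to uniqueness of the It\^o decomposition underlying Proposition \ref{prop3.6}: it is what lets us read off the drift $(D_t + \frac{1}{2}\Delta_x)u$ from either the BSDE or the PPDE and conclude they coincide. The forward-backward rewriting is purely algebraic, and the integrability hypotheses (Assumption 1 for the first part, membership in $W_{\mathcal{A}}^{\frac{1}{2},1;p}(0,T)$ for the equivalence) are exactly what is needed to keep all coefficients in $M_P^p(0,T)$ and $H_P^p(0,T)$ so that Proposition \ref{prop3.6} applies without further estimates. I expect no serious obstacle; the main care is simply to keep the terminal condition and the forward/backward bookkeeping consistent.
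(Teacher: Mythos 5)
Your proposal is correct and follows essentially the same route as the paper: both reduce the theorem to Proposition \ref{prop3.6}, rewriting the BSDE in forward It\^o form to get membership in $W_{\mathcal{A}}^{\frac{1}{2},1;p}(0,T)$ and the identifications $D_x u = Z$, $\mathcal{A}u = -f$, and then pass between the PPDE and the backward form by the same coefficient-matching and forward/backward rearrangement over $[t,T]$. The appeal to uniqueness of the It\^o decomposition that you isolate as the ``genuine content'' is exactly what the ``Moreover'' clause of Proposition \ref{prop3.6} encodes, so nothing is missing.
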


\begin{remark}
By this theorem, we can directly apply the result of existence and
uniqueness of backward SDE to prove that of path dependent PDE (\ref{PPDE1}-%
\ref{PPDE2}). We recall the following existence and unqueness result
[Pardoux and Peng 1990] of the backward SDE (\ref{BSDE}) under the following
standard condition: $\xi \in L_{P}^{p}(\Omega _{T})$ and the function $f$
satisfying Lipschitz condition in $(y,z)$, namely, there exists a constant $%
C>0$, such that, for all $\omega \in \Omega $,
\begin{equation*}
|f(t,\omega ,y,z)-f(t,\omega ,y^{\prime },z^{\prime })|\leq C(|y-y^{\prime
}|+|z-z^{\prime }|),\  \ y,y^{\prime }\in \mathbb{R}^{n},\  \ z,z^{\prime }\in
\mathbb{R}^{n\times d}.
\end{equation*}%
This backward SDE can be directly read as a well-posed path dependent PDE (%
\ref{PPDE1}-\ref{PPDE2}).
\end{remark}

\begin{proof}
(i)$\implies $(ii). Assume that $(Y,Z)$ is a solution to the backward SDE (%
\ref{BSDE}). By Proposition \ref{prop3.6} we know that $u(t,\omega
):=Y_{t}(\omega )\in W_{\mathcal{A}}^{\frac{1}{2},1;p}(0,T)$ with $D_{x}u(t,\omega
)=Z_{t}(\omega )$ and
\begin{equation*}
{\mathcal{A}}u(t,\omega )+f(t,u(t,\omega ),D_{x}u(t,\omega ))=0.
\end{equation*}

(ii)$\implies $(i). Assume that $u(t,\omega )\in \mathcal{W}%
_{P}^{1,2;p}(0,T) $ is a solution to the path dependent PDE (\ref{PPDE1}-\ref%
{PPDE2}). By Proposition \ref{prop3.6} we have
\begin{align*}
u(t,\omega )& =u(0,\omega )+\int_{0}^{t}{\mathcal{A}}u(s,\omega
)ds+\int_{0}^{t}D_{x}u(s,\omega )dB_{s} \\
& =u(0,\omega )-\int_{0}^{t}f(s,u(s,\omega ),D_{x}u(s,\omega
))ds+\int_{0}^{t}D_{x}u(s,\omega )dB_{s} \\
& =\xi (\omega )+\int_{t}^{T}f(s,u(s,\omega ),D_{x}u(s,\omega
))ds-\int_{t}^{T}D_{x}u(s,\omega )dB_{s}.
\end{align*}
\end{proof}

\begin{remark}
An advantage of the above formulation is that the path dependent PDE can be
a system of PDEs, namely $u(t,\omega)$ can be $\mathbb{R}^{m}$-valued, or
even $H$-valued for a Hilbert space $H$.
\end{remark}

\begin{remark}
\label{rem4.2} Let us consider Markovian situations: $\xi=\varphi (B_{T}),\
f(t,\omega,y,z)=h(t,B_{t}(\omega),y,z)\ $for deterministic and continuous
functions$\  \varphi(x)\ $and\ $h(t,x,y,z)$ satisfying Lipschitz conditions
in $(y,z)$ and polynomial growth condition in $x$. Assume that $(Y,Z)$ is
the solution to the backward SDE (\ref{BSDE}) with the coefficients $%
(\varphi(B_{T})$, $h(t,B_{t},y,z))$. By the classical arguments in the BSDE
theory, we know that $Y$ is Markovian, i.e., there exists a deterministic
function $u(t,x)$ such that $Y_{t}=u(t,B_{t}).$ Assuming $u(t,x)$ is smooth,
we have
\begin{equation*}
\mathcal{A}u(t,\omega)=\partial_{t}u(t,B_{t})+\frac{1}{2}%
\Delta_{x}u(t,B_{t}),\ D_{x}u(t,\omega)=Du(t,B_{t}).
\end{equation*}
By Theorem \ref{thm4.1}, we have
\begin{equation*}
\partial_{t}u(t,B_{t})+\frac{1}{2}%
\Delta_{x}u(t,B_{t})+h(t,B_{t},u(t,B_{t}),Du(t,B_{t}))=0.
\end{equation*}
Equivalently,

\begin{equation*}
\partial _{t}u(t,x)+\frac{1}{2}\Delta _{x}u(t,x)+h(t,x,u(t,x),D_{x}u(t,x))=0.
\end{equation*}%
This is just the non-linear Feynman-Kac formula originally studied in Peng
(1991) and Pardoux-Peng (1992). In fact, with our new formulation, as
functions of $x$, $\varphi $ and $h$ only need to be measurable functions
with polynomial growth.
\end{remark}

\section{Sobolev spaces on path space under nonlinear expectation}

For the situation when $G$ is not a linear function the problem becomes more
subtle.

\subsection{$G$-Sobolev spaces of path functions}

In the $G$-expectation space, by $G$-It\^o's formula, for $u\in{%
\mathcal{C}}^{\infty}(0,\infty)$ we immediately obtain  the following decomposition.

\begin{proposition}
\label{prop3.7} For each given $u\in{\mathcal{C}}^{\infty}(0,\infty)$ we
have
\begin{align*}
u(t,\omega) &
=u(0,\omega)+\int_{0}^{t}D_{s}u(s,\omega)ds+\int_{0}^{t}D_{x}u(s,%
\omega)dB_{s}+\frac{1}{2}\int_{0}^{t}D_{x}^{2}u(s,\omega)d\langle
B\rangle_{s} \\
& =u(0,\omega)+\int_{0}^{t}\mathcal{A}_{G}u(s,\omega)ds+%
\int_{0}^{t}D_{x}u(s,\omega)dB_{s}+K_{t},
\end{align*}
where
\begin{equation*}
\mathcal{A}_{G}u(s,\omega):=(D_s+G\circ
D^2_x)u(s,\omega)=D_{s}u(s,\omega)+G(D_{x}^{2}u(s,\omega)),
\end{equation*}
and $K_{t}$ is a non-increasing $G$-martingale:%
\begin{equation*}
K_{t}:=\frac{1}{2}\int_{0}^{t}D_{x}^{2}u(s,\omega)d\langle
B\rangle_{s}-\int_{0}^{t}G(D_{x}^{2}u(s,\omega))ds.
\end{equation*}
\end{proposition}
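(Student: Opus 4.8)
The plan is to read this as the $G$-expectation analogue of Proposition \ref{prop3.3}, replacing the classical It\^{o} formula by the $G$-It\^{o} formula already recorded in (\ref{G-Ito-formula}). First I would fix $u\in\mathcal{C}^{\infty}(0,\infty)$ together with its defining partition $0=t_{0}<t_{1}<\cdots<t_{n}$ and the smooth generators $u_{k}$, and work on one subinterval $(t_{k},t_{k+1}]$ at a time. There $u(t,\omega)=u_{k}(t,\omega(t);\omega(t_{1}),\cdots,\omega(t_{k}))$, and since $\omega(t_{1}),\cdots,\omega(t_{k})$ are already determined at time $t_{k}$, I would freeze them and apply the $G$-It\^{o} formula to the smooth function $(t,x)\mapsto u_{k}(t,x;\omega(t_{1}),\cdots,\omega(t_{k}))$ evaluated along $x=B_{t}$. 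By the definitions (\ref{opera1}), (\ref{opera2}) and (\ref{Dx}) of the path-derivatives $D_{t}u$, $D_{x}u$, $D_{x}^{2}u$, this yields, for $t\in(t_{k},t_{k+1}]$,
\begin{equation*}
u(t,\omega)-u(t_{k},\omega)=\int_{t_{k}}^{t}D_{s}u\,ds+\int_{t_{k}}^{t}D_{x}u\,dB_{s}+\frac{1}{2}\int_{t_{k}}^{t}D_{x}^{2}u\,d\langle B\rangle_{s}.
\end{equation*}

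Next I would glue these increments together. The compatibility condition $u_{k}(t_{k},x;x_{1},\cdots,x_{k-1},x)=u_{k-1}(t_{k},x;x_{1},\cdots,x_{k-1})$ imposed in Definition \ref{def3.1} is exactly what guarantees that $u(\cdot,\omega)$ is continuous across each partition point $t_{k}$, so summing the per-interval identities over $k$ telescopes and produces the first displayed decomposition on all of $[0,t]$. The second form is then purely algebraic: writing $\mathcal{A}_{G}u=D_{s}u+G(D_{x}^{2}u)$ and adding and subtracting $\int_{0}^{t}G(D_{x}^{2}u)\,ds$ turns $\int_{0}^{t}D_{s}u\,ds+\frac{1}{2}\int_{0}^{t}D_{x}^{2}u\,d\langle B\rangle_{s}$ into $\int_{0}^{t}\mathcal{A}_{G}u\,ds+K_{t}$ with $K_{t}$ as stated.

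It remains to verify the two properties of $K_{t}$, and here lies the only genuinely nontrivial point. For monotonicity I would use the representation of the sublinear and monotone $G$ as $G(A)=\tfrac{1}{2}\sup_{Q}\mathrm{tr}[QA]$ over the associated uncertainty set, which forces $\tfrac{1}{2}\mathrm{tr}[D_{x}^{2}u\,d\langle B\rangle_{s}]\leq G(D_{x}^{2}u)\,ds$ and hence $dK_{s}\leq0$, so $K$ is non-increasing. The $G$-martingale property is the main obstacle: it does not follow from a pathwise inequality but from the distributional identity $\mathbb{E}_{s}^{G}[\tfrac{1}{2}\mathrm{tr}[A(\langle B\rangle_{t}-\langle B\rangle_{s})]]=G(A)(t-s)$ for frozen $A$, combined with the stationary independent-increment structure of $\langle B\rangle$. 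I would first check this on a subinterval where $D_{x}^{2}u$ is frozen and determined by the past, obtaining $\mathbb{E}_{s}^{G}[K_{t}-K_{s}]=0$ by conditioning, and then paste across the partition using the tower property of the conditional $G$-expectation. Since this non-increasing $G$-martingale decomposition is by now standard in the $G$-framework, one may alternatively simply invoke the known result; in either case the decomposition and the identification of $\eta,\zeta,\gamma$ with the path-derivatives follow.
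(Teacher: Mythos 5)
Your proposal is correct and takes essentially the same route as the paper, whose proof is in fact left implicit: the decomposition is obtained immediately by applying the $G$-It\^{o} formula interval-by-interval on the cylinder partition (with the compatibility condition giving continuity across partition points), and the non-increasing $G$-martingale property of $K_{t}$ is the known result of Peng (2007) (\cite{P07b}), which you also offer to invoke. The only caveat concerns your direct verification of the $G$-martingale property: on the subintervals of the cylinder partition $D_{x}^{2}u$ is \emph{not} frozen (it still depends on $t$ and $\omega(t)$ there), so the conditioning argument requires a further approximation of $D_{x}^{2}u$ by step processes and a passage to the limit in $L_{G}^{p}$ --- which is exactly the content of the cited result, so your fallback closes this gap.
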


\begin{definition}
\label{def3.8} 1) For $u\in {\mathcal{C}}^{\infty }(0,T)$, we set
\begin{equation*}
\Vert u\Vert _{S_{G}^{p}}^{p}=\mathbb{E}^{G}[\sup_{s\in \lbrack
0,T]}|u_{s}|^{p}].
\end{equation*}%
We denote by $S_{G}^{p}(0,T)$ the completion of $u\in {\mathcal{C}}^{\infty
}(0,T)$ w.r.t. the norm $\Vert \cdot \Vert _{S_{G}^{p}}$.

2) For $u\in {\mathcal{C}}^{\infty }(0,T)$, we set
\begin{equation*}
\Vert u\Vert _{W_{G}^{1,2;p}}^{p}=\mathbb{E}^{G}[\sup_{s\in \lbrack
0,T]}|u_{s}|^{p}+\int_{0}^{T}(|D_{s}u_{s}|^{p}+|D_{x}^{2}u_{s}|^{p})ds+\{%
\int_{0}^{T}|D_{x}u_{s}|^{2}ds\}^{p/2}].
\end{equation*}
\end{definition}

To define the $G$-Sobolev spaces, the key point is to show the uniqueness of
the decomposition for $G$-It\^o processes, which was actually solved by Song
(2012) in the one-dimensional $G$-expectation space and by Peng, Song and
Zhang (2012) for the multi-dimensional case.

For simplicity of notation, in the rest of this paper we only consider the
one-dimensional $G$-expectation space with $\bar{\sigma}^{2}:=\mathbb{E}%
^{G}[B_{1}^{2}]>\underline{\sigma}^{2}:=-\mathbb{E}^{G}[-B_{1}^{2}]$.

\begin{lemma}
\label{lem3.9} If
\begin{equation*}
u(t,\omega )=\int_{0}^{t}\zeta (s,\omega )ds+\int_{0}^{t}v(s,\omega )dB_{s}+%
\frac{1}{2}\int_{0}^{t}w(s,\omega )d\langle B\rangle _{s}=0,\ t\in \lbrack
0,T],
\end{equation*}%
with $\zeta ,w\in M_{G}^{p}(0,T)$ and $v\in H_{G}^{p}(0,T)$, then we have $\zeta =v=w=0$.
\end{lemma}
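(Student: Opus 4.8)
The plan is to peel the three terms off one at a time, exploiting that the stochastic integral $\int v\,dB$, the $ds$-integral and the $d\langle B\rangle$-integral are of structurally different nature. The whole point is that, unlike in the Wiener case of Proposition~\ref{prop3.6}, the quadratic variation $\langle B\rangle$ is genuinely non-deterministic as soon as $\bar{\sigma}^{2}>\underline{\sigma}^{2}$, and it is precisely this non-degeneracy that will let me decouple the $ds$ part from the $d\langle B\rangle$ part.

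First I would isolate the diffusion coefficient $v$ by applying $G$-It\^o's formula to $u^{2}$. Since $u\equiv 0$, every term containing the factor $u$ drops out and only the quadratic-variation correction survives, giving $\int_{0}^{t}v_{s}^{2}\,d\langle B\rangle_{s}=0$ for all $t$, quasi-surely. Taking $\mathbb{E}^{G}$ and using that $\langle B\rangle$ is strictly increasing under the extremal scenario $\langle B\rangle_{s}=\bar{\sigma}^{2}s$ (recall $\bar{\sigma}^{2}>0$) then forces $v=0$ in $H_{G}^{p}(0,T)$. After this, the identity reduces to the finite-variation relation $\int_{0}^{t}\zeta_{s}\,ds+\frac{1}{2}\int_{0}^{t}w_{s}\,d\langle B\rangle_{s}=0$.

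It remains to show that this last relation forces $\zeta=w=0$, and this is the heart of the matter. Here I would test the identity against the two extremal volatility scenarios, under which the random clock $\langle B\rangle$ has density $\bar{\sigma}^{2}$ and $\underline{\sigma}^{2}$ respectively; differentiating in $t$ along each scenario produces $\zeta_{s}+\frac{1}{2}\bar{\sigma}^{2}w_{s}=0$ and $\zeta_{s}+\frac{1}{2}\underline{\sigma}^{2}w_{s}=0$ for almost every $s$. Subtracting kills $\zeta$, leaving $\frac{1}{2}(\bar{\sigma}^{2}-\underline{\sigma}^{2})w_{s}=0$, and since $\bar{\sigma}^{2}>\underline{\sigma}^{2}$ this gives $w=0$ and then $\zeta=0$.

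The main obstacle is not the formal separation above but the promotion of these ``almost surely under a single extremal measure'' or ``almost everywhere in $t$'' conclusions into quasi-sure statements, i.e.\ into vanishing in the $M_{G}^{p}$ and $H_{G}^{p}$ norms. Controlling $v$, $\zeta$ and $w$ simultaneously under the whole family of scenarios representing $\mathbb{E}^{G}$, and in particular handling the regions where the clock $\langle B\rangle$ may degenerate when $\underline{\sigma}^{2}=0$, is exactly what makes the $ds$ versus $d\langle B\rangle$ distinction delicate. This is precisely the content of Theorem~3.3 of Song (2012), which I would invoke to make the norm estimates rigorous rather than reprove from scratch.
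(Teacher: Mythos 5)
Your proposal has the same skeleton as the paper's proof: first strip off the martingale part by a classical, scenario-wise uniqueness argument (the paper uses uniqueness of the semimartingale decomposition; your It\^{o}-on-$u^{2}$ computation gives the same conclusion), and then reduce the finite-variation statement ``$\int_{0}^{t}\zeta_{s}ds+\frac{1}{2}\int_{0}^{t}w_{s}d\langle B\rangle_{s}=0$ implies $\zeta=w=0$'' to Song (2012) --- the paper cites Corollary 3.5 there, you cite Theorem 3.3 of the same work. Since that citation carries the entire weight in both cases, the proof you would actually write down is operationally the paper's proof.

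However, the heuristic you present as ``the heart of the matter'' is not a valid outline of why the result holds, and the gap is not where you locate it. The relation $\zeta_{s}+\frac{1}{2}\overline{\sigma}^{2}w_{s}=0$ is obtained almost surely under the scenario where $\langle B\rangle_{t}\equiv\overline{\sigma}^{2}t$, and $\zeta_{s}+\frac{1}{2}\underline{\sigma}^{2}w_{s}=0$ almost surely under the scenario where $\langle B\rangle_{t}\equiv\underline{\sigma}^{2}t$; these two measures are mutually singular, so the two identities hold on disjoint sets of paths and cannot be subtracted. More tellingly, your argument nowhere uses the hypothesis $\zeta,w\in M_{G}^{p}(0,T)$, only adaptedness and integrability --- but at that level of generality the statement is \emph{false}: take $w\equiv2$ and $\zeta_{s}:=-\hat{a}_{s}$, where $\hat{a}_{s}$ is the quasi-sure density $d\langle B\rangle_{s}/ds$ (which exists and lies in $[\underline{\sigma}^{2},\overline{\sigma}^{2}]$ q.s.). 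Then $\int_{0}^{t}\zeta_{s}ds+\frac{1}{2}\int_{0}^{t}w_{s}d\langle B\rangle_{s}=-\langle B\rangle_{t}+\langle B\rangle_{t}=0$ quasi-surely, $\zeta$ is bounded and adapted, yet $w\neq0$. What Song's theorem actually proves is that such a $\zeta$ does \emph{not} belong to $M_{G}^{p}(0,T)$: membership in $M_{G}^{p}$, i.e.\ approximability by step processes (cylinder functionals whose values \emph{can} be compared across mutually singular scenarios), is the essential hypothesis, and exploiting it requires oscillating-volatility scenarios and quantitative estimates applied to the approximating step processes --- not a pointwise subtraction followed by a ``quasi-sure upgrade.'' So Song's result is not a rigorization of your separation argument; it is a genuinely different argument and constitutes the real content of the lemma. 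A milder instance of the same slip occurs in your first step: $\int_{0}^{T}v_{s}^{2}d\langle B\rangle_{s}=0$ under the single extremal scenario only yields vanishing of the expectation under that one measure, whereas $\Vert v\Vert_{H_{G}^{p}}$ is a supremum over all scenarios; you need the identity quasi-surely (as indeed it holds), together with care on the regions where the clock degenerates.
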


\begin{proof}
By the uniqueness of the decomposition for continuous semimartingales we
have $v=0$ and $\int_{0}^{t}\zeta(s,\omega)ds+\frac{1}{2}\int_{0}^{t}w(s,%
\omega)d\langle B\rangle_{s}=0$. By Corollary 3.5 in Song (2012) we conclude
that $\zeta=w=0$.
\end{proof}

\begin{proposition}
\label{prop3.10} The norm $\Vert \cdot \Vert _{W_{G}^{1,2;p}}$ is closable
in the space $S_{G}^{p}(0,T)$: Let $u^{n}\in {\mathcal{C}}^{\infty }(0,T)$
be a Cauchy sequence w.r.t. the norm $\Vert \cdot \Vert _{W_{G}^{1,2;p}}$.
If $\Vert u^{n}\Vert _{S_{G}^{p}}\rightarrow 0$, we have $\Vert u^{n}\Vert
_{W_{G}^{1,2;p}}\rightarrow 0$.
\end{proposition}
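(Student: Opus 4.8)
The plan is to mirror the Wiener-space argument underlying Proposition \ref{prop3.5}, replacing the uniqueness of the classical It\^o decomposition by its $G$-counterpart, namely Lemma \ref{lem3.9}. Since the $S_G^p$-part of $\Vert u^n\Vert_{W_G^{1,2;p}}$ already vanishes by hypothesis, the task reduces to showing that the three derivative contributions $\int_0^T|D_su^n|^p\,ds$, $\int_0^T|D_x^2u^n|^p\,ds$ and $\{\int_0^T|D_xu^n|^2\,ds\}^{p/2}$ also tend to zero in $\mathbb{E}^G$. First I would invoke Proposition \ref{prop3.7} to write, for each $n$,
\begin{equation*}
u^n(t,\omega)=u^n(0,\omega)+\int_0^tD_su^n(s,\omega)\,ds+\int_0^tD_xu^n(s,\omega)\,dB_s+\tfrac12\int_0^tD_x^2u^n(s,\omega)\,d\langle B\rangle_s.
\end{equation*}

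Next, reading off the definition of $\Vert\cdot\Vert_{W_G^{1,2;p}}$ in Definition \ref{def3.8}, the Cauchy property of $(u^n)$ in this norm forces $(D_su^n)$ and $(D_x^2u^n)$ to be Cauchy in $M_G^p(0,T)$ and $(D_xu^n)$ to be Cauchy in $H_G^p(0,T)$. By completeness of these spaces there exist limits $\zeta,w\in M_G^p(0,T)$ and $v\in H_G^p(0,T)$ with $D_su^n\to\zeta$, $D_x^2u^n\to w$ in $M_G^p$ and $D_xu^n\to v$ in $H_G^p$. The whole point is that, a priori, these limits need not be zero; I must show that the hypothesis $\Vert u^n\Vert_{S_G^p}\to0$ pins them down to zero.

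To that end I would pass to the limit in the decomposition above. Using the Burkholder--Davis--Gundy-type and H\"older estimates available in the $G$-framework, convergence of the integrands in $M_G^p$ and $H_G^p$ yields convergence of the three integrals, uniformly in $t$, so that in $S_G^p(0,T)$,
\begin{equation*}
\int_0^\cdot D_su^n\,ds\to\int_0^\cdot\zeta\,ds,\quad \int_0^\cdot D_xu^n\,dB_s\to\int_0^\cdot v\,dB_s,\quad \tfrac12\int_0^\cdot D_x^2u^n\,d\langle B\rangle_s\to\tfrac12\int_0^\cdot w\,d\langle B\rangle_s.
\end{equation*}
Meanwhile $u^n\to0$ in $S_G^p$ forces both $u^n(\cdot,\omega)\to0$ and $u^n(0,\omega)\to0$. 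Taking limits in the identity therefore produces, for all $t\in[0,T]$,
\begin{equation*}
\int_0^t\zeta(s,\omega)\,ds+\int_0^t v(s,\omega)\,dB_s+\tfrac12\int_0^t w(s,\omega)\,d\langle B\rangle_s=0,
\end{equation*}
with $\zeta,w\in M_G^p(0,T)$ and $v\in H_G^p(0,T)$.

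Finally, this is precisely the hypothesis of Lemma \ref{lem3.9}, whose conclusion gives $\zeta=v=w=0$. Hence all three derivative terms in $\Vert u^n\Vert_{W_G^{1,2;p}}$ converge to zero, and combined with $\Vert u^n\Vert_{S_G^p}\to0$ we obtain $\Vert u^n\Vert_{W_G^{1,2;p}}\to0$, as claimed. The genuinely hard analytic content---the uniqueness of the $dt$, $dB$ and $d\langle B\rangle$ parts of a $G$-It\^o process, which fails for classical reasons since $\langle B\rangle$ is not absolutely continuous with a deterministic density---is entirely absorbed into Lemma \ref{lem3.9} (via Corollary 3.5 of Song (2012)); the remaining obstacle for this proof is only the routine but necessary justification that convergence of integrands in $M_G^p$ and $H_G^p$ transfers to $S_G^p$-convergence of the corresponding integrals, so that Lemma \ref{lem3.9} may legitimately be applied to the limiting process.
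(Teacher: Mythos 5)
Your proposal is correct and follows essentially the same route as the paper: the paper's proof is the one-line assertion that the claim ``follows directly from the uniqueness of the decomposition for $G$-It\^{o} processes,'' and your argument is precisely the honest expansion of that line---decompose each $u^{n}$ via Proposition \ref{prop3.7}, pass to limits of the integrands in $M_{G}^{p}(0,T)$ and $H_{G}^{p}(0,T)$ and of the integrals in $S_{G}^{p}(0,T)$, and apply Lemma \ref{lem3.9} to the resulting null $G$-It\^{o} process to conclude $\zeta=v=w=0$. The standard estimates you defer (H\"older for the $ds$ and $d\langle B\rangle$ integrals, the $G$-BDG inequality for the $dB$ integral) are exactly the routine ingredients the paper leaves implicit, so no gap remains.
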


\begin{proof}
The proposition follows directly from the uniqueness of the decomposition
for $G$-It\^o processes.
\end{proof}

Denote by $W_{G}^{1,2;p}(0,T)$ the completion of ${\mathcal{C}}^{\infty
}(0,T)$ w.r.t. the norm $\Vert \cdot \Vert _{W_{G}^{1,2;p}}$. By the above
proposition, $W_{G}^{1,2;p}(0,T)$ can be considered as a subspace of $%
S_{G}^{p}(0,T)$. Now the differential operators $D_{t}$, $D_{x}$ and $%
D_{x}^{2}$ defined on ${\mathcal{C}}^{\infty }(0,T)$ can be continuously
extended to the space $W_{G}^{1,2;p}(0,T)$.

\begin{proposition}
\label{prop3.11} Assume $u\in S_{G}^{p}(0,T)$. Then the following two
conditions are equivalent:

(i) $u\in W_{G}^{1,2;p}(0,T)$;

(ii) $u$ is of the form:
\begin{equation*}
u(t,\omega )=u(0,\omega )+\int_{0}^{t}\zeta (s,\omega
)ds+\int_{0}^{t}v(s,\omega )dB_{s}+\frac{1}{2}\int_{0}^{t}w(s,\omega
)d\langle B\rangle _{s},
\end{equation*} where $ \zeta,w\in M_{G}^{p}(0,T)$ and $ v\in H_{G}^{p}(0,T)$

Moreover, we have
\begin{equation*}
D_{t}u(t,\omega )=\zeta (t,\omega ),\ D_{x}u(t,\omega )=v(t,\omega ),\
D_{x}^{2}u(t,\omega )=w(t,\omega ).
\end{equation*}
\end{proposition}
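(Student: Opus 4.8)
The plan is to prove the equivalence by mirroring exactly the structure of the proof of Proposition \ref{prop3.6} in the Wiener case, with Lemma \ref{lem3.9} playing the role that uniqueness of the It\^o decomposition played there. The direction (i)$\implies$(ii) is immediate: by definition any $u\in W_{G}^{1,2;p}(0,T)$ is the $\Vert\cdot\Vert_{W_{G}^{1,2;p}}$-limit of cylinder processes $u^{n}\in{\mathcal{C}}^{\infty}(0,T)$, each of which admits the decomposition of Proposition \ref{prop3.7} with $\zeta^{n}=D_{s}u^{n}$, $v^{n}=D_{x}u^{n}$, $w^{n}=D_{x}^{2}u^{n}$. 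Since convergence in $\Vert\cdot\Vert_{W_{G}^{1,2;p}}$ forces $\zeta^{n}\to\zeta$ in $M_{G}^{p}$, $v^{n}\to v$ in $H_{G}^{p}$, and $w^{n}\to w$ in $M_{G}^{p}$ for some limits, and the stochastic integral and the $d\langle B\rangle$-integral are continuous in these norms, passing to the limit yields the stated integral representation with $\zeta=D_{t}u$, $v=D_{x}u$, $w=D_{x}^{2}u$.

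For the converse (ii)$\implies$(i) I would argue by approximation, exactly as in Proposition \ref{prop3.6}. Given the representation with $\zeta,w\in M_{G}^{p}(0,T)$ and $v\in H_{G}^{p}(0,T)$, I choose step processes $\zeta^{n},w^{n}\in M^{0}(0,T)$ and $v^{n}\in M^{0}(0,T)$ with $\Vert\zeta^{n}-\zeta\Vert_{M_{G}^{p}}\to0$, $\Vert w^{n}-w\Vert_{M_{G}^{p}}\to0$, and $\Vert v^{n}-v\Vert_{H_{G}^{p}}\to0$, and set
\begin{equation*}
u^{n}(t,\omega):=u(0,\omega)+\int_{0}^{t}\zeta^{n}(s,\omega)ds+\int_{0}^{t}v^{n}(s,\omega)dB_{s}+\frac{1}{2}\int_{0}^{t}w^{n}(s,\omega)d\langle B\rangle_{s}.
\end{equation*}
Each $u^{n}$ lies in ${\mathcal{C}}^{\infty}(0,T)$ by (the $G$-analogue of) Proposition \ref{prop3.2}, and by Proposition \ref{prop3.7} its weak derivatives are precisely $D_{t}u^{n}=\zeta^{n}$, $D_{x}u^{n}=v^{n}$, $D_{x}^{2}u^{n}=w^{n}$. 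The differences $u^{n}-u^{m}$ then have all three of their $G$-It\^o components small in the respective norms, so $(u^{n})$ is Cauchy in $\Vert\cdot\Vert_{W_{G}^{1,2;p}}$; here one uses a Burkholder--Davis--Gundy type estimate under $\mathbb{E}^{G}$ to control the $S_{G}^{p}$ part of the norm by the $M_{G}^{p}$ and $H_{G}^{p}$ norms of the components. By Proposition \ref{prop3.10} the limit in $S_{G}^{p}(0,T)$ agrees with $u$ and belongs to $W_{G}^{1,2;p}(0,T)$, with the claimed identification of derivatives.

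The step where I expect the real content to reside is the proof that the representation in (ii) \emph{uniquely} determines $\zeta$, $v$, and $w$, i.e. the passage that guarantees the weak derivatives are well-defined and independent of the approximating sequence. This is exactly Lemma \ref{lem3.9}: the crucial and nontrivial point is that one cannot merely appeal to the uniqueness of the continuous-semimartingale decomposition, since that only separates the $dB_{s}$ martingale part (giving $v$) from the finite-variation part $\int_{0}^{t}\zeta\,ds+\frac{1}{2}\int_{0}^{t}w\,d\langle B\rangle_{s}$. Disentangling the $ds$-part $\zeta$ from the $d\langle B\rangle_{s}$-part $w$ within that finite-variation term is the genuinely $G$-expectation phenomenon, and it rests on Corollary 3.5 of Song (2012) (the result flagged earlier in the introduction as Theorem 3.3 of Song (2012)) which exploits that $\langle B\rangle$ is not deterministic when $\bar{\sigma}^{2}>\underline{\sigma}^{2}$. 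Everything else is a routine transcription of the Wiener-case argument with $E_{P}$ replaced by $\mathbb{E}^{G}$, so I would present this uniqueness input as the single essential obstacle and cite Lemma \ref{lem3.9} to close it.
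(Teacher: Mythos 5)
Your direction (i)$\implies$(ii) is fine, and your outer Cauchy-sequence scaffolding matches the paper's. But the converse direction has a genuine gap at precisely the step you wave through: the claim that
\begin{equation*}
u^{n}(t,\omega)=u(0,\omega)+\int_{0}^{t}\zeta^{n}(s,\omega)ds+\int_{0}^{t}v^{n}(s,\omega)dB_{s}+\frac{1}{2}\int_{0}^{t}w^{n}(s,\omega)d\langle B\rangle_{s}
\end{equation*}
lies in ${\mathcal{C}}^{\infty}(0,T)$ ``by the $G$-analogue of Proposition \ref{prop3.2}.'' No such analogue exists: $\langle B\rangle_{t}$ is not a cylinder function of the path (it cannot be written as a smooth function of $\omega(t),\omega(t_{1}),\dots,\omega(t_{k})$), so even with step-process integrands the $d\langle B\rangle$-integral is not a cylinder path process. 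This is exactly the phenomenon highlighted in Remark \ref{rem3.12}: the process $u(t,\omega)=\langle B\rangle_{t}$ must itself be \emph{approximated} by cylinder processes, and its derivatives are only defined through that approximation. Since $W_{G}^{1,2;p}(0,T)$ is by definition the completion of ${\mathcal{C}}^{\infty}(0,T)$, your sequence $(u^{n})$, not being in ${\mathcal{C}}^{\infty}(0,T)$, cannot be used directly to place $u$ in $W_{G}^{1,2;p}(0,T)$; the argument does not close.

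The paper's proof resolves exactly this obstacle, and it is the real content of the proposition (not Lemma \ref{lem3.9}, which you correctly cite but which only enters via the closability result, Proposition \ref{prop3.10}, to make the extended derivatives well defined). After reducing to step integrands $\zeta,v,w$, the paper replaces $\langle B\rangle_{t}$ by the discrete quadratic variation $Q^{n}(t,\omega)=\sum_{k}(B_{t_{k+1}^{n}\wedge t}-B_{t_{k}^{n}\wedge t})^{2}$, which \emph{is} of cylinder type; then $u^{n}:=u(0)+\int_{0}^{t}\zeta\,ds+\int_{0}^{t}v\,dB_{s}+\frac{1}{2}\int_{0}^{t}w\,dQ^{n}$ belongs to ${\mathcal{C}}^{\infty}(0,T)$. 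Writing $Q^{n}(t)=\int_{0}^{t}\lambda^{n}(s)dB_{s}+\langle B\rangle_{t}$ with $\lambda^{n}(t)=\sum_{k}2(B_{t}-B_{t_{k}})1_{]t_{k},t_{k+1}]}(t)$, one sees that $u^{n}$ has $D_{t}u^{n}=\zeta$, $D_{x}^{2}u^{n}=w$ exactly, while $D_{x}u^{n}=v+\frac{1}{2}w\lambda^{n}$ differs from $v$ by a term that vanishes in $H_{G}^{p}(0,T)$ as the mesh goes to $0$; this gives the needed cylinder approximation of $u$ in the $\Vert\cdot\Vert_{W_{G}^{1,2;p}}$ norm. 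To repair your proof you must insert this $Q^{n}$ construction (or an equivalent device) at the point where you invoke the nonexistent $G$-analogue of Proposition \ref{prop3.2}.
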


\begin{proof}
(i) $\implies $ (ii) is obvious. Let's prove (ii) $\implies $ (i). It
suffices to prove it for the case that $\zeta ,v,w$ are step processes. Set $%
t_{k}^{n}=\frac{kT}{2^{n}}$ and
\begin{equation*}
Q^{n}(t,\omega ):=\sum_{k=0}^{2^{n}-1}(B_{t_{k+1}^{n}\wedge
t}-B_{t_{k}^{n}\wedge t})^{2}=\int_{0}^{t}\lambda ^{n}(s,\omega
)dB_{s}+\langle B\rangle _{t},
\end{equation*}%
where $\lambda ^{n}(t,\omega
)=\sum_{k=0}^{2^{n}-1}2(B_{t}-B_{t_{k}})1_{]t_{k},t_{k+1}]}(t)$. Set
\begin{align*}
u^{n}(t,\omega )& :=u(\omega _{0})+\int_{0}^{t}\zeta (s,\omega
)ds+\int_{0}^{t}v(s,\omega )dB_{s}+\frac{1}{2}\int_{0}^{t}w_{s}dQ^{n}(s,%
\omega ) \\
& =u(\omega _{0})+\int_{0}^{t}\zeta (s,\omega )ds+\int_{0}^{t}(v(s,\omega )+%
\frac{1}{2}w(s,\omega )\lambda ^{n}(s,\omega ))dB_{s}+\int_{0}^{t}\frac{1}{2}%
w(s,\omega )d\langle B\rangle _{s}.
\end{align*}%
Clearly $u^{n}$ belongs to ${\mathcal{C}}^{\infty }(0,T)$. By Proposition %
\ref{prop3.7} and the uniqueness of the decomposition for $G$-It\^{o}
processes, we have
\begin{equation*}
D_{t}u^{n}(t,\omega )=\zeta (t,\omega ),\ D_{x}u^{n}(t,\omega )=v(t,\omega )+%
\frac{1}{2}w(t,\omega )\lambda ^{n}(t,\omega ),\ D_{x}^{2}u^{n}(t,\omega
)=w(t,\omega ).
\end{equation*}%
It's easy to check that $\mathbb{E}^{G}[(\int_{0}^{T}|D_{x}u^{n}(t,\omega
)-v(t,\omega )|^{2}dt)^{p/2}]\rightarrow 0$. So $u$ belongs to $W_{G}^{1,2;p}(0,T)$
with
\begin{equation*}
D_{t}u(t,\omega )=\zeta (t,\omega ),\ D_{x}u(t,\omega )=v(t,\omega ),\
D_{x}^{2}u(t,\omega )=w(t,\omega ).
\end{equation*}
\end{proof}

\begin{proposition}
For each $u,v\in W_{G}^{1,2;p}(0,T)$, we have $c_{1}u+c_{2}v\in
W_{G}^{1,2;p}(0,T)$ and
\begin{eqnarray*}
D_{t}(c_{1}u+c_{2}v) &=&c_{1}D_{t}u+c_{2}D_{t}v,\  \\
D_{x}(c_{1}u+c_{2}v) &=&c_{1}D_{x}u+c_{2}D_{x}v,\  \
D_{x}^{2}(c_{1}u+c_{2}v)=c_{1}D_{x}^{2}u+c_{2}D_{x}^{2}v.
\end{eqnarray*}%
Moreover, if their product $uv$ is also in $W_{G}^{1,2;p}(0,T)$, then
\begin{eqnarray*}
D_{t}(uv) &=&vD_{t}u+uD_{t}v,\  \ D_{x}(uv)=vD_{x}u+uD_{x}v \\
D_{x}^{2}(uv) &=&vD_{x}^{2}u+uD_{x}^{2}v+2D_{x}uD_{x}v.
\end{eqnarray*}
\end{proposition}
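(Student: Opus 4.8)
The linearity assertion I would obtain directly from the characterization of $W_G^{1,2;p}(0,T)$ given in Proposition \ref{prop3.11}. Writing the canonical decompositions
\begin{align*}
u(t,\omega) &= u(0,\omega)+\int_0^t D_s u\, ds+\int_0^t D_x u\, dB_s+\tfrac{1}{2}\int_0^t D_x^2 u\, d\langle B\rangle_s, \\
v(t,\omega) &= v(0,\omega)+\int_0^t D_s v\, ds+\int_0^t D_x v\, dB_s+\tfrac{1}{2}\int_0^t D_x^2 v\, d\langle B\rangle_s,
\end{align*}
I would multiply by $c_1,c_2$ and add, producing a representation of $c_1u+c_2v$ of exactly the form in Proposition \ref{prop3.11}(ii), with $ds$-integrand $c_1 D_t u+c_2 D_t v$, $dB$-integrand $c_1 D_x u+c_2 D_x v$, and $d\langle B\rangle$-integrand $c_1 D_x^2 u+c_2 D_x^2 v$. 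Since $M_G^p(0,T)$ and $H_G^p(0,T)$ are linear spaces these integrands are admissible, so Proposition \ref{prop3.11} delivers both $c_1u+c_2v\in W_G^{1,2;p}(0,T)$ and the three derivative formulas.

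For the product rule my plan is to apply the integration-by-parts formula for $G$-It\^o processes to the two decompositions above. Since the $dt$- and $d\langle B\rangle$-parts are of finite variation, the only contribution to the covariation comes from the $dB$-parts, so I would compute $d\langle u,v\rangle_t=D_x u\, D_x v\, d\langle B\rangle_t$. Collecting terms, this should give
\begin{equation*}
d(uv)=(uD_t v+vD_t u)\,dt+(uD_x v+vD_x u)\,dB_t+\tfrac{1}{2}\big(uD_x^2 v+vD_x^2 u+2D_x u\, D_x v\big)\,d\langle B\rangle_t,
\end{equation*}
exhibiting $uv$ as a $G$-It\^o process whose $ds$-, $dB$- and $d\langle B\rangle$-integrands are precisely the right-hand sides of the three claimed identities.

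To finish I would identify these integrands with the canonical derivatives $D_t(uv)$, $D_x(uv)$, $D_x^2(uv)$. Since $uv\in W_G^{1,2;p}(0,T)$ by hypothesis, Proposition \ref{prop3.11} also supplies the canonical decomposition of $uv$, and subtracting the two representations and invoking the uniqueness of the decomposition (Lemma \ref{lem3.9}) would yield all three formulas at once. The hard part will be the applicability of Lemma \ref{lem3.9}, which demands that the integrands produced by the product formula lie in $M_G^p(0,T)$ and $H_G^p(0,T)$; this is not automatic, because a product such as $uD_t v$ of a factor in $S_G^p(0,T)$ with one in $M_G^p(0,T)$ need not stay in $M_G^p(0,T)$ for the same exponent, which is exactly why the statement is conditioned on $uv\in W_G^{1,2;p}(0,T)$. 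To make this rigorous I would first prove the product formula for $u,v\in{\mathcal{C}}^\infty(0,T)$, where every integrand is a smooth cylinder process and integrability is free, and then pass to the limit along approximating sequences, relying on the closability from Proposition \ref{prop3.10} and on the separation of the $ds$- and $d\langle B\rangle$-parts (Corollary 3.5 of Song (2012)) that underlies Lemma \ref{lem3.9}.
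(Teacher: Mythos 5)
Your proposal is correct and follows essentially the same route as the paper, whose entire proof is the single remark that the result follows by applying Proposition \ref{prop3.11} combined with It\^o's formula for $uv$ — exactly your argument for both the linearity claims and the product rule, with the identification of the integrands made through the uniqueness of the $G$-It\^o decomposition. Your extra discussion of the integrability of the cross terms such as $vD_{t}u$ goes beyond the level of detail the paper provides (and your proposed approximation remedy would face the same moment problem when passing products to the limit), but the core argument is precisely the paper's intended one.
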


The proof is simply to apply the above proposition combined with the It\^{o}'s
formula for $uv$.

\begin{remark}
\label{rem3.12} 1) By Proposition \ref{prop3.11} we note that the equality
\begin{equation*}
D_{x}^{2}u(t,\omega )=D_{x}(D_{x}u)(t,\omega )
\end{equation*}%
does NOT hold for general $u\in W_{G}^{1,2;p}(0,T)$ although it holds for $%
u\in {\mathcal{C}}^{\infty }(0,T)$. Let's see how this happens from a simple
example: Let $u(t,\omega )=\langle B\rangle _{t}$, $t\in \lbrack 0,1]$. By
the definition we have
\begin{equation*}
D_{t}u(t,\omega )=D_{x}u(t,\omega )=0,\,\,\,\, D_{x}^{2}u(t,\omega )=2.
\end{equation*}%
Set $t_{k}^{n}=\frac{k}{2^{n}}$ and $u^{n}(t,\omega
)=\sum_{k=0}^{2^{n}-1}(B_{t_{k+1}^{n}\wedge t}-B_{t_{k}^{n}\wedge t})^{2}$.
By the definition we have
\begin{equation*}
D_{t}u^{n}(t,\omega )=0,\,\,\,\, D_{x}u^{n}(t,\omega
)=\sum_{k}2(B_{t}-B_{t_{k}})1_{]t_{k},t_{k+1}]}(t),\,\,\,\,D_{x}^{2}u^{n}(t,\omega
)=2.
\end{equation*}%
It is easily seen that $u^{n}\rightarrow u$ in $W_{G}^{1,2;p}(0,T)$.
Particularly, $D_{x}u^{n}\rightarrow D_{x}u$ in $H_{G}^{p}(0,T)$. However,
\begin{equation*}
D_{x}(D_{x}u^{n})(t,\omega )=D_{x}^{2}u^{n}(t,\omega )=2
\end{equation*}%
does NOT converge to
\begin{equation*}
D_{x}(D_{x}u)(t,\omega )=0.
\end{equation*}

2) Compared to Proposition \ref{prop3.6}, here the derivatives $D_{t}u,\
D_{x}u,\ D_{x}^{2}u$ can be distinguished clearly.
\end{remark}

\subsection{ Backward SDEs driven by $G$-Brownian motion}

In this section we show that a backward stochastic differential equation \
is in fact a path dependent PDE

\subsubsection{One-one correspondence}

Let us consider backward SDEs driven by $G$-Brownian motion in the following
from: to find $Y\in S_{G}^{p}(0,T)$, $Z\in H_{G}^{p}(0,T),\eta \in
M_{G}^{p}(0,T)$ such that
\begin{equation}
Y_{t}=\xi +\int_{t}^{T}f(s,Y_{s},Z_{s},\eta
_{s})ds-\int_{t}^{T}Z_{s}dB_{s}-(K_{T}-K_{t}),  \label{GBSDE}
\end{equation}%
where $K_{t}=\frac{1}{2}\int_{0}^{t}\eta _{s}d\langle B\rangle
_{s}-\int_{0}^{t}G(\eta _{s})ds$, $f:{\normalsize [0,T]\times }\mathbb{%
R\times R}^{d}\times \mathbb{S}{\normalsize (d)\mapsto }\mathbb{R}$ is a
given function and $\xi \in L_{G}^{p}(\Omega _{T})$ is a given random
variable.

The related problem of path dependent PDEs: to find $u\in W_{G}^{1,2;p}(0,T)$
such that
\begin{align}
D_{t}u+G(D_{x}^{2}u)+f(t,u,D_{x}u,D_{x}^{2}u)& =0,\  \ t\in \lbrack 0,T),
\label{PPDE3} \\
u(T,\omega )& =\xi (\omega ).  \label{PPDE4}
\end{align}%
We call $u$ a $W_{G}^{1,2;p}$-solution of the path dependent PDE (\ref{PPDE3}%
-\ref{PPDE4}).

\textbf{Assumption 2.}

$f(t,\omega ,Y_{t},Z_{t},\eta _{t})\in M_{G}^{p}(0,T)$ for any $(Y,Z,\eta
)\in S_{G}^{p}(0,T)\times H_{G}^{p}(0,T)\times M_{G}^{p}(0,T)$.

\begin{theorem}
\label{thm4.3} Let $(Y,Z,\eta )$ be a solution to the backward SDE (\ref%
{GBSDE}). Then we have $u(t,\omega ):=Y_{t}(\omega )\in W_{G}^{1,2;p}(0,T)$
with $D_{x}u(t,\omega )=Z_{t}(\omega )$ and $D_{x}^{2}u(t,\omega )=\eta
_{t}(\omega )$.

Moreover, for $u(t,\omega )\in W_{G}^{1,2;p}(0,T)$, the following conditions
are equivalent:

(i) $(u, D_{x} u, D^{2}_{x} u)$ is a solution to the backward SDE (\ref%
{GBSDE});

(ii) $u$ is a $W_{G}^{1,2;p}$-solution to the path dependent PDE (\ref{PPDE3}%
-\ref{PPDE4}).
\end{theorem}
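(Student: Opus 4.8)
The plan is to mirror the structure of the proof of Theorem \ref{thm4.1}, replacing the classical decomposition (Proposition \ref{prop3.6}) by its $G$-analogue (Proposition \ref{prop3.11}). The key technical input is the uniqueness of the decomposition for $G$-It\^{o} processes, which is already available through Lemma \ref{lem3.9} and, underlying it, Corollary 3.5 of Song (2012). Everything else is a matter of matching the terms of the $G$-BSDE (\ref{GBSDE}) with the three components $(\zeta, v, w)$ appearing in Proposition \ref{prop3.11}.

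First I would prove the opening assertion. Given a solution $(Y,Z,\eta)$ to (\ref{GBSDE}), I rewrite the equation in forward form starting from $u(0,\omega)=Y_0$. Using $K_t=\frac{1}{2}\int_0^t \eta_s\,d\langle B\rangle_s-\int_0^t G(\eta_s)\,ds$, the equation becomes
\begin{equation*}
u(t,\omega)=u(0,\omega)-\int_0^t f(s,Y_s,Z_s,\eta_s)\,ds+\int_0^t Z_s\,dB_s+\int_0^t G(\eta_s)\,ds-\frac{1}{2}\int_0^t \eta_s\,d\langle B\rangle_s.
\end{equation*}
Grouping the two $ds$-integrals, this is exactly the form in Proposition \ref{prop3.11}(ii) with $\zeta_s=-f(s,Y_s,Z_s,\eta_s)+G(\eta_s)$, $v_s=Z_s$, and $w_s=-\eta_s$. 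Here Assumption 2 guarantees $f(\cdot,Y,Z,\eta)\in M_G^p(0,T)$, $G(\eta)\in M_G^p(0,T)$ (by sublinearity of $G$ and $\eta\in M_G^p$), $Z\in H_G^p(0,T)$, and $\eta\in M_G^p(0,T)$, so the integrability hypotheses of Proposition \ref{prop3.11} are met. That proposition then yields $u\in W_G^{1,2;p}(0,T)$ together with $D_t u=\zeta$, $D_x u=v=Z$, and $D_x^2 u=w=-\eta$. A careful sign check here is essential: I would confirm that the $\frac12\int_0^t w\,d\langle B\rangle_s$ convention in Proposition \ref{prop3.11} forces $D_x^2 u=w=-\eta$, and reconcile this with the claimed identity $D_x^2 u=\eta$ in the statement — this apparent sign discrepancy is the point I expect to need the most scrutiny, and it hinges entirely on the sign convention in the definition of $K_t$.

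For the equivalence, I would argue exactly as in Theorem \ref{thm4.1}. Assume $u\in W_G^{1,2;p}(0,T)$. By Proposition \ref{prop3.11} we always have the canonical decomposition
\begin{equation*}
u(t,\omega)=u(0,\omega)+\int_0^t D_s u\,ds+\int_0^t D_x u\,dB_s+\frac{1}{2}\int_0^t D_x^2 u\,d\langle B\rangle_s,
\end{equation*}
with $D_t u, D_x^2 u\in M_G^p(0,T)$ and $D_x u\in H_G^p(0,T)$. To show (i)$\implies$(ii): if $(u,D_x u, D_x^2 u)$ solves (\ref{GBSDE}), then substituting $Z=D_x u$, $\eta=D_x^2 u$ into the forward rewriting above and comparing coefficients term-by-term against the canonical decomposition, the uniqueness from Lemma \ref{lem3.9} forces $D_t u = -f(t,u,D_x u,D_x^2 u)+G(D_x^2 u)-\tfrac12 D_x^2 u\cdot 0$, i.e. precisely $D_t u+G(D_x^2 u)+f(t,u,D_x u,D_x^2 u)=0$, which is (\ref{PPDE3}); the terminal condition (\ref{PPDE4}) is $u(T,\omega)=Y_T=\xi$. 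Conversely, (ii)$\implies$(i) is obtained by plugging the PPDE relation $D_t u=-G(D_x^2 u)-f$ back into the canonical decomposition and reorganizing into the backward form with $K_t=\frac12\int_0^t D_x^2 u\,d\langle B\rangle_s-\int_0^t G(D_x^2 u)\,ds$, reading off $(Y,Z,\eta)=(u,D_x u,D_x^2 u)$ as a solution of (\ref{GBSDE}).

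The main obstacle is not conceptual but bookkeeping: keeping the sign of $D_x^2 u$ versus $\eta$ consistent across the $K_t$ definition, the decomposition convention in Proposition \ref{prop3.11}, and the term $G(\eta_s)\,ds$. Once the correspondence $v=Z$, $w=\pm\eta$, and $\zeta = -f+G(\eta)$ is pinned down, the uniqueness supplied by Lemma \ref{lem3.9} does all the real work, exactly as the uniqueness of the classical It\^{o} decomposition did in Theorem \ref{thm4.1}. I would therefore present the argument compactly, flagging the sign convention explicitly, and leaving the integrability verifications to Assumption 2 and the sublinearity of $G$.
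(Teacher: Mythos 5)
Your overall strategy is exactly the intended one: the paper gives no separate proof of Theorem \ref{thm4.3}, leaving it to be read off from Proposition \ref{prop3.11} (with the uniqueness supplied by Lemma \ref{lem3.9}) in precisely the way Theorem \ref{thm4.1} is read off from Proposition \ref{prop3.6}. However, your execution contains a concrete sign error, and the ``apparent sign discrepancy'' you flag and leave unresolved is of your own making, not a feature of the conventions. Writing the BSDE (\ref{GBSDE}) at time $t$ and at time $0$ and subtracting, the forward form is
\begin{equation*}
Y_{t}=Y_{0}-\int_{0}^{t}f(s,Y_{s},Z_{s},\eta _{s})\,ds+\int_{0}^{t}Z_{s}\,dB_{s}+K_{t},
\end{equation*}
with $K_{t}$ entering with a \emph{plus} sign: the term $-(K_{T}-K_{t})$ contributes $+K_{t}$ once the terminal value is eliminated. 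Substituting $K_{t}=\frac{1}{2}\int_{0}^{t}\eta _{s}\,d\langle B\rangle _{s}-\int_{0}^{t}G(\eta _{s})\,ds$ gives
\begin{equation*}
Y_{t}=Y_{0}+\int_{0}^{t}\bigl(-f(s,Y_{s},Z_{s},\eta _{s})-G(\eta _{s})\bigr)\,ds+\int_{0}^{t}Z_{s}\,dB_{s}+\frac{1}{2}\int_{0}^{t}\eta _{s}\,d\langle B\rangle _{s},
\end{equation*}
so in the notation of Proposition \ref{prop3.11} the correspondence is $\zeta =-f-G(\eta )$, $v=Z$ and $w=+\eta$ --- not $w=-\eta$ and not $\zeta =-f+G(\eta )$ as in your rewriting, where you effectively subtracted $K_{t}$ instead of adding it.

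This matters beyond bookkeeping, because the error propagates into your equivalence argument and breaks it: from your formula $D_{t}u=-f+G(D_{x}^{2}u)$ one gets $D_{t}u+G(D_{x}^{2}u)+f=2G(D_{x}^{2}u)$, which is not $0$ in general, so the ``i.e.'' step in your (i)$\implies$(ii) is false as written. With the correct signs everything closes with no tension at all: $D_{x}^{2}u=\eta$ exactly as the theorem claims, $D_{t}u=-f-G(\eta )$, hence $D_{t}u+G(D_{x}^{2}u)+f(t,u,D_{x}u,D_{x}^{2}u)=0$; and conversely a $W_{G}^{1,2;p}$-solution of (\ref{PPDE3}-\ref{PPDE4}) reassembles, via Proposition \ref{prop3.11}, into the backward form (\ref{GBSDE}) with $(Y,Z,\eta )=(u,D_{x}u,D_{x}^{2}u)$ and $K_{t}=\frac{1}{2}\int_{0}^{t}D_{x}^{2}u\,d\langle B\rangle _{s}-\int_{0}^{t}G(D_{x}^{2}u)\,ds$. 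Once you fix the sign of $K_{t}$ in the forward rewriting, your proof coincides with the paper's intended argument.
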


\subsubsection{Solutions of path dependent PDEs defined by $G$-BSDEs}

Now let's consider a special case of the path dependent PDE (\ref{PPDE3}-\ref%
{PPDE4}): \emph{$f$ is independent of $D^{2}_{x}u$.}
\begin{align}
D_{t}u+ G(D^{2}_{x} u)+f(t,u,D_{x} u) & =0, \  \ t\in[0,T),  \label{PPDE3'} \\
u(T,\omega) & =\xi(\omega).  \label{PPDE4'}
\end{align}

Let $u\in W_{G}^{1,2;p}(0,T)$ be a solution to the path dependent PDE (\ref%
{PPDE3'}-\ref{PPDE4'}). By Theorem \ref{thm4.3}, the processes
\begin{equation*}
Y_{t}:=u(t,\omega ),\,\,\, Z_{t}:=D_{x}u(t,\omega ),\,\,\, K_{t}:=\frac{1}{2}%
\int_{0}^{t}D_{x}^{2}u(s,\omega )d\langle B\rangle
_{s}-\int_{0}^{t}G(D_{x}^{2}u(s,\omega ))ds
\end{equation*}%
satisfy the following backward SDE:
\begin{equation}
Y_{t}=\xi
+\int_{t}^{T}f(s,Y_{s},Z_{s})ds-\int_{t}^{T}Z_{s}dB_{s}-(K_{T}-K_{t}),
\label{GBSDE-s4}
\end{equation}%
which is a type of $G$-BSDE studied in [HJPS2012] (see the Appendix).

Let $(Y,Z,K)$ be a solution of backward SDE (\ref{GBSDE-s4}). Generally, we
don't know whether $Y\in W_{G}^{1,2;p}(0,T)$. However, $u(t,\omega ):=Y_{t}$
is still a reasonable candidate for the solution of the path dependent PDE (%
\ref{PPDE3'}-\ref{PPDE4'}). In section 5, we shall formulate it as a weak solution by
 introducing the Sobolev space $W_{\mathcal{A}_G}^{\frac{1}{2},1;p}(0,T)$, which,
 corresponding to $W_{\mathcal{A}}^{\frac{1}{2},1;p}(0,T)$ in the classical Wiener
 probability space, is an expansion of  $W_{G}^{1,2;p}(0,T)$ with weaker derivatives.
\subsubsection{Examples and applications}

\begin{example}
Let $\eta \in M_{G}^{p}(0,T)$. To find $u\in W_{G}^{1,2;p}(0,T)$ such that
\begin{align}
D_{t}u+G(D_{x}^{2}u+\eta _{t})& =0  \label{e1.1} \\
u(T, \omega)& =0.  \label{e1.2}
\end{align}

Assume that $u\in W_{G}^{1,2;p}(0,T)$ is a solution to (\ref{e1.1}-\ref{e1.2}%
). Then
\begin{equation*}
u(t,\omega )=\mathbb{E}_{t}^{G}[\frac{1}{2}\int_{t}^{T}\eta _{s}d\langle
B\rangle _{s}].
\end{equation*}

In fact,
\begin{align*}
u_{t} & =-\int_{t}^{T}D_{s}u_{s}ds-\int_{t}^{T}D_{x}u_{s}dB_{s}-\frac{1}{2}%
\int_{t}^{T}D^{2}_{x}u_{s}d\langle B\rangle_{s} \\
& = -(M_{T}-M_{t}) + \frac{1}{2}\int_{t}^{T}\eta_{s}d\langle B\rangle_{s},
\end{align*}
where $M_{t}:=\int_{0}^{t}D_{x}u_{s}dB_{s}+\frac{1}{2}%
\int_{0}^{t}(D^{2}_{x}u_{s}+\eta_{s})d\langle B\rangle_{s}
-\int_{0}^{t}G(D^{2}_{x}u_{s}+\eta_{s})ds$ is a $G$-martingale. So
\begin{equation*}
u_{t}=\mathbb{E}^{G}_{t}[\frac{1}{2}\int_{t}^{T}\eta_{s}d\langle B\rangle
_{s}].
\end{equation*}
\end{example}

\begin{example}
Let $\eta \in M_{G}^{p}(0,T)$. To find $v\in W_{G}^{1,2;p}(0,T)$ such that
\begin{align}
D_{t}v+G^{\eta }(D_{x}^{2}v)& =0  \label{e2.1} \\
v(T, \omega)& =0,  \label{e2.2}
\end{align}%
where $G^{\eta }(\zeta _{s})=\frac{1}{2}[G(\zeta _{s}+\eta _{s})+G(\zeta
_{s}-\eta _{s})]$.

Assume that $v\in W_{G}^{1,2;p}(0,T)$ is a solution to (\ref{e2.1}- \ref%
{e2.2}). Then
\begin{equation*}
v_{t}=\limsup_{n\rightarrow \infty }\mathbb{E}_{t}^{G}[\frac{1}{2}%
\int_{t}^{T}\delta _{n}(s)\eta _{s}d\langle B\rangle _{s}],
\end{equation*}%
where $\delta _{n}(s)=\Sigma _{i=0}^{n-1}(-1)^{i}1_{]\frac{iT}{n},\frac{%
(i+1)T}{n}]}$.

Actually,
\begin{align*}
v_{t} & =-\int_{t}^{T}D_{s}v_{s}ds-\int_{t}^{T}D_{x}v_{s}dB_{s}-\frac{1}{2}%
\int_{t}^{T}D^{2}_{x}v_{s}d\langle B\rangle_{s} \\
& = -\int_{t}^{T}D_{x}v_{s}dB_{s}-\frac{1}{2}\int_{t}^{T}(D^{2}_{x}v_{s}+%
\delta_{n}(s)\eta_{s})d\langle B\rangle_{s}
+\int_{t}^{T}G^{\eta}(D^{2}_{x}v_{s})ds \\
& + \frac{1}{2}\int_{t}^{T}\delta_{n}(s)\eta_{s}d\langle B\rangle_{s}.
\end{align*}
So
\begin{align*}
& v_{t}+\limsup_{n\rightarrow \infty} \mathbb{E}^{G}_{t}[\frac{1}{2}\int
_{t}^{T}(D^{2}_{x}v_{s}+\delta_{n}(s)\eta_{s})d\langle B\rangle_{s} +\int
_{t}^{T}G^{\eta}(D^{2}_{x}v_{s})ds] \\
& = \frac{1}{2}\int_{t}^{T}\delta_{n}(s)\eta_{s}d\langle B\rangle_{s}.
\end{align*}
Noting that
\begin{equation*}
\limsup_{n\rightarrow \infty} \mathbb{E}^{G}_{t}[\frac{1}{2}%
\int_{t}^{T}(D^{2}_{x}v_{s}+\delta_{n}(s)\eta_{s})d\langle B\rangle_{s}
+\int_{t}^{T}G^{\eta}(D^{2}_{x}v_{s})ds]=0,
\end{equation*}
we get
\begin{equation*}
v_{t}=\limsup_{n\rightarrow \infty} \mathbb{E}^{G}_{t}[\frac{1}{2}%
\int_{t}^{T}\delta_{n}(s)\eta_{s}d\langle B\rangle_{s}].
\end{equation*}
\end{example}

\begin{example}
Let $\eta \in M_{G}^{p}(0,T)$ and $\varepsilon \in \lbrack 0,\frac{\overline{%
\sigma }^{2}-\underline{\sigma }^{2}}{2}]$. To find $u\in W_{G}^{1,2;p}(0,T)$
such that
\begin{align}
D_{t}u+G_{\varepsilon }(D_{x}^{2}u)+\frac{1}{2}\eta _{t}& =0  \label{e3.1} \\
u(T, \omega)& =0,  \label{e3.2}
\end{align}%
where $G_{\varepsilon }(a)=\frac{1}{2}[(\overline{\sigma }^{2}-\varepsilon
)a^{+}-(\underline{\sigma }^{2}+\varepsilon )a^{-}]$.

Assume that $u\in W_{G}^{1,2;p}(0,T)$ is a solution to (\ref{e3.1}- \ref%
{e3.2}). Then
\begin{equation*}
u(t,\omega )=\mathbb{E}_{t}^{G_{\varepsilon }}[\frac{1}{2}\int_{t}^{T}\eta
_{s}ds].
\end{equation*}

In fact,
\begin{align*}
u_{t} & =-\int_{t}^{T}D_{s}u_{s}ds-\int_{t}^{T}D_{x}u_{s}dB_{s}-\frac{1}{2}%
\int_{t}^{T}D^{2}_{x}u_{s}d\langle B\rangle_{s} \\
& = -(M^{\varepsilon}_{T}-M^{\varepsilon}_{t}) + \frac{1}{2}%
\int_{t}^{T}\eta_{s}ds,
\end{align*}
where $M^{\varepsilon}_{t}:=\int_{0}^{t}D_{x}u_{s}dB_{s}+\frac{1}{2}\int
_{0}^{t}D^{2}_{x}u_{s}d\langle B\rangle_{s}
-\int_{0}^{t}G_{\varepsilon}(D^{2}_{x}u_{s})ds$ is a $G_{\varepsilon}$%
-martingale. So
\begin{equation*}
u_{t}=\mathbb{E}^{G_{\varepsilon}}_{t}[\frac{1}{2}\int_{t}^{T}\eta_{s}ds].
\end{equation*}
\end{example}

Set $\beta=\frac{\overline{\sigma}^{2}}{\underline{\sigma}^{2}}$ and $\gamma=%
\frac{\beta-1}{\beta+1}$. For any $a, \alpha \in R$ and $\varepsilon \in[0,
\frac{\overline{\sigma}^{2}-\underline{\sigma}^{2}}{2}]$, it's easy to check
that
\begin{equation*}
G(a+\gamma|\alpha|)\geq G^{\alpha}(a)\geq G_{\varepsilon}(a)+\frac{1}{2}%
\varepsilon|\alpha|.
\end{equation*}
So by the comparison theorem for the (path dependent) PDEs, we recover the
estimates obtained in [Song12] and [PSZ12].

\begin{corollary}
For any $\eta \in M^{1}_{G}(0,T)$, we have
\begin{equation*}
\gamma \mathbb{E}^{G}[\int_{0}^{T}|\eta_{s}|d\langle B\rangle_{s}] \geq
\limsup_{n\rightarrow \infty}\mathbb{E}^{G}[\int_{0}^{T}\delta_{n}(s)\eta
_{s}d\langle B\rangle_{s}] \geq \varepsilon \mathbb{E}^{G_{\varepsilon}}[%
\int_{0}^{T}|\eta_{s}|ds].
\end{equation*}
\end{corollary}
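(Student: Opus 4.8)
The plan is to read the three quantities in the asserted chain as the values at $t=0$ of the solutions of three path dependent PDEs already solved in Examples~1--3, whose generators are ordered pointwise by the elementary inequality
\[
G(a+\gamma|\alpha|)\geq G^{\alpha}(a)\geq G_{\varepsilon}(a)+\frac{1}{2}\varepsilon|\alpha|,
\]
and then to transfer this ordering to the solutions by the comparison theorem for the PPDEs (equivalently, for the associated $G$-BSDEs, whose well-posedness is recalled in the Appendix).

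First I would pin down the middle term. By Example~2, the solution $v$ of $D_{t}v+G^{\eta}(D_{x}^{2}v)=0$, $v(T,\omega)=0$, has value $v_{0}=\limsup_{n}\mathbb{E}^{G}[\frac{1}{2}\int_{0}^{T}\delta_{n}(s)\eta_{s}d\langle B\rangle_{s}]$, so by positive homogeneity of $\mathbb{E}^{G}$ the middle term of the Corollary is exactly $2v_{0}$. Next I would realise the two outer terms as extremal solution values: replacing $\eta$ by $\gamma|\eta|$ in Example~1 gives $D_{t}w+G(D_{x}^{2}w+\gamma|\eta_{t}|)=0$, $w(T,\omega)=0$, with value $w_{0}=\frac{\gamma}{2}\mathbb{E}^{G}[\int_{0}^{T}|\eta_{s}|d\langle B\rangle_{s}]$ (using $\gamma\geq0$), while replacing $\eta$ by $\varepsilon|\eta|$ in Example~3 gives $D_{t}\tilde{u}+G_{\varepsilon}(D_{x}^{2}\tilde{u})+\frac{1}{2}\varepsilon|\eta_{t}|=0$, $\tilde{u}(T,\omega)=0$, with value $\tilde{u}_{0}=\frac{\varepsilon}{2}\mathbb{E}^{G_{\varepsilon}}[\int_{0}^{T}|\eta_{s}|ds]$.

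Then I would set $\alpha=\eta_{t}$ in the pointwise inequality; since $G^{\eta}=G^{|\eta|}$ this shows that the generator $a\mapsto G^{\eta}(a)$ of the middle PPDE is dominated above by $a\mapsto G(a+\gamma|\eta_{t}|)$ and below by $a\mapsto G_{\varepsilon}(a)+\frac{1}{2}\varepsilon|\eta_{t}|$. Because all three problems share the terminal value $0$, the comparison theorem (a pointwise larger generator yields a pointwise larger solution when run backward from the same terminal condition) gives $w_{0}\geq v_{0}\geq\tilde{u}_{0}$. Multiplying through by $2$ and substituting the identification $2v_{0}=\limsup_{n}\mathbb{E}^{G}[\int_{0}^{T}\delta_{n}(s)\eta_{s}d\langle B\rangle_{s}]$ yields precisely the claimed chain of inequalities.

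I expect the main technical point to be the verification of the pointwise inequality itself: although elementary, it requires a case analysis on the signs of $a$, $a\pm|\alpha|$ and $a+\gamma|\alpha|$, using the identities $\beta=\overline{\sigma}^{2}/\underline{\sigma}^{2}$ and $\gamma=\frac{\beta-1}{\beta+1}$ to balance the coefficients $\overline{\sigma}^{2},\underline{\sigma}^{2}$ against $\overline{\sigma}^{2}-\varepsilon,\underline{\sigma}^{2}+\varepsilon$. A secondary point is the integrability gap between the hypothesis $\eta\in M_{G}^{1}(0,T)$ of the Corollary and the $M_{G}^{p}$ setting of Examples~1--3: I would first prove the inequality for step processes $\eta$ (which lie in every $M_{G}^{p}$), and then pass to a general $\eta\in M_{G}^{1}(0,T)$ by $M_{G}^{1}$-approximation, noting that $d\langle B\rangle_{s}\leq\overline{\sigma}^{2}ds$ bounds each term uniformly in $n$, so that the $\limsup_{n}$ in the middle term survives the limit.
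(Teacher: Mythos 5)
Your proposal follows exactly the paper's own route: the paper, too, proves this corollary by reading the three quantities as (twice) the time-zero values of the solutions of the PPDEs in Examples 1--3, invoking the pointwise generator inequality $G(a+\gamma|\alpha|)\geq G^{\alpha}(a)\geq G_{\varepsilon}(a)+\frac{1}{2}\varepsilon|\alpha|$ with $\alpha=\eta_{t}$, and then applying the comparison theorem for (path dependent) PDEs to the common terminal condition $0$. Your two technical addenda --- the sign case analysis behind the generator inequality, and the step-process approximation bridging the hypothesis $\eta\in M_{G}^{1}(0,T)$ with the $M_{G}^{p}$ ($p>1$) setting of the examples --- are points the paper passes over in silence, so they only strengthen an argument that is otherwise identical.
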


\section {Weak solutions in $W_{\mathcal{A}_G}^{\frac{1}{2},1;p}(0,T)$}

\subsection{ $G$-Sobolev spaces $W_{\mathcal{A}_G}^{\frac{1}{2},1;p}(0,T)$}

\subsection{Definition of $G$-Sobolev spaces}

For each $u\in W_{G}^{1,2;p}(0,T)$ with $D_{t}u=\lambda $, $D_{x}u=\zeta $
and $D_{x}^{2}u=\gamma $, we set
\begin{equation*}
u(t,\omega )=u_{0}+\int_{0}^{t}\mathcal{A}_{G}u(s,\omega
)ds+\int_{0}^{t}\zeta (s,\omega )dB_{s}+K_{t}^{\gamma }
\end{equation*}%
where we denote
\begin{equation*}
K_{t}^{\gamma }=\frac{1}{2}\int_{0}^{t}\gamma (s,\omega )d\left \langle
B\right \rangle _{s}-\int_{0}^{t}G(\gamma (s,\omega ))d\left \langle B\right
\rangle _{s}.
\end{equation*}%
and
\begin{equation*}
\mathcal{A}_{G}u=\lambda +G(D_{x}^{2}u).
\end{equation*}

For $u,v\in W_{G}^{1,2;p}(0,T)$, set
\begin{equation*}
d_{W_{\mathcal{A}_G}^{\frac{1}{2},1;p}}(u,v)=\mathbb{E}^{G}[\sup_{s\in \lbrack
0,T]}|u_{s}-v_{s}|^{p}+( \int_{0}^{T}|D_{x}(u_{s}-v_{s})|^{2}ds)^{p/2}+ \int_{0}^{T}|\mathcal{A}%
_{G}u_s-\mathcal{A}_{G}v_s|^{p}ds]^{1/p}.
\end{equation*}

For each fixed $\eta \in M_{G}^{p}(0,T)$ we set
\begin{equation*}
W_{G}^{1,2;p}(\eta):=\{u\in W_{G}^{1,2;p}(0,T):\mathcal{A}_{G}(u)+\eta
=0\},
\end{equation*}%
which is the collection of all solutions of the PDE $\mathcal{A}_{G}(u)+\eta
=0$ in $W_{G}^{1,2;p}(0,T)$. We denote by $W_{\mathcal{A}_G}^{\frac{1}{2},1;p}(\eta )$
the completion of $W_{G}^{1,2;p}(\eta )$ under
$d_{W_{\mathcal{A}_G}^{\frac{1}{2},1;p}}$.

 We denote by $%
W_{\mathcal{A}_G}^{\frac{1}{2},1;p}(0,T)$ the collection of all processes $u\in
S_{G}^{p}(0,T)$ with the following property: there exists a Cauchy sequence $\{u^{n}\} \subset
W_{G}^{1,2;p}(0,T)$ with respect to the metric $d_{W_{\mathcal{A}_G}^{\frac{1}{2},1;p}}$
such that $\Vert u^{n}-u\Vert _{S_{G}^{p}}\rightarrow 0.$

Denote by $\mathcal{K}^p$ the closure of $\mathcal{K}^{0}:=\{K_{\cdot
}^{\gamma }:\gamma \in M_{G}^{p}(0,T)\}$ in the space $S_{G}^{p}(0,T)$.
Obviously, we have
\begin{equation*}
W_{\mathcal{A}_G}^{\frac{1}{2},1;p}(0,T)=\{u=u_{0}+\int_{0}^{t}\beta (s)ds+\int_{0}^{t}\zeta (s)dB_{s}+K_{t}:\  \beta \in
M_{G}^{p}(0,T)\text{, }\zeta \in
H_{G}^{p}(0,T)\text{, }K\in \mathcal{K}^p\},
\end{equation*}%
and
\begin{equation*}
W_{\mathcal{A}_G}^{\frac{1}{2},1;p}(\eta)=\{u=u_{0}+\int_{0}^{t}\eta (s
)ds+\int_{0}^{t}\zeta (s )dB_{s}+K_{t}:\  \zeta \in H_{G}^{p}(0,T)%
\text{, }K\in \mathcal{K}^p\}.
\end{equation*}

\subsubsection{ A review of the structure of $G$-martingales}

In order to understand the spaces $W_{\mathcal{A}_G}^{\frac{1}{2},1;p}(0,T)$ and $%
W_{\mathcal{A}_G}^{\frac{1}{2},1;p}(\eta)$, we recall the structure of $G$-martingales.
\cite{P07b} proved that for any $\xi \in C^{\infty }(\Omega _{T})$, the $G$%
-martingale $X_{t}=\mathbb{E}_{t}^{G}[\xi ]$ has the following
representation:
\begin{equation}
X_{t}=\mathbb{E}^{G}[\xi ]+\int_{0}^{t}Z_{s}dB_{s}+\frac{1}{2}%
\int_{0}^{t}\eta _{s}d\langle B\rangle _{s}-\int_{0}^{t}G(\eta _{s})ds
\label{MR}
\end{equation}%
for some $Z\in H_{G}^{p}(0,T),\eta \in M_{G}^{p}(0,T)$ and conjectured that for any $\xi \in
L_{G}^{p}(\Omega _{T})$ the representation (\ref{MR}) holds. Besides, \cite%
{P07b} showed that for any $\eta \in M_{G}^{p}(0,T)$,
\begin{equation*}
K_{t}:=\frac{1}{2}\int_{0}^{t}\eta _{s}d\langle B\rangle
_{s}-\int_{0}^{t}G(\eta _{s})ds
\end{equation*}%
is a non-increasing $G$-martingale. So any process $K\in \mathcal{K}^p$ is a
non-increasing $G$-martingale with $K_{T}\in L_{G}^{p}(\Omega _{T})$. By
Theorem 5.4 in \cite{Song11b}, the converse statement is also right.

For $p\geq 1$ and $\xi \in C^{\infty }(\Omega _{T})$, set $\Vert \xi \Vert _{%
\mathbb{L}_{G}^{p}}^{p}=\mathbb{E}^{G}[\sup_{t\in \lbrack 0,T]}|\mathbb{E}%
_{t}^{G}[\xi ]|^{p}]$. Denote by $\mathbb{L}_{G}^{p}(\Omega _{T})$ the
closure of $C^{\infty }(\Omega _{T})$ with respect to the norm $\Vert \cdot
\Vert _{\mathbb{L}_{G}^{p}}$ in $L_{G}^{p}(\Omega _{T})$. \cite{STZ09}
showed that for any $\xi \in \mathbb{L}_{G}^{2}(\Omega _{T})$ the $G$-martingale $%
X_{t}:=\mathbb{E}_{t}^{G}[\xi ]$ has the following decomposition:
\begin{equation}
X_{t}=\mathbb{E}^{G}[\xi ]+\int_{0}^{t}Z_{s}dB_{s}+K_{t},  \label{MD}
\end{equation}%
where $K_{t}$ is a non-increasing $G$-martingale.

Song \cite{Song11} showed that $\mathbb{L}_{G}^{p}(\Omega _{T})\supset {%
L_{G}^{q}}(\Omega _{T})$ for any $1\leq p<q$. Moreover, \cite{Song11} proved
that the decomposition (\ref{MD}) holds for any $\xi \in L_{G}^{p}(\Omega
_{T})$ with $p>1$. Independently, \cite{STZ} showed that $\mathbb{L}_{G}^{2}(\Omega
_{T})\supset {L_{G}^{q}}(\Omega _{T})$ for any $q>2$.

Let $u\in S_{G}^{p}(0,T)$. We say that $u$ is \emph{a weak $G$-It\^{o}
process} if there exist $\eta\in M_{G}^{p}(0,T)$ and $\zeta \in H_{G}^{p}(0,T)$ such that
\begin{equation*}
u(t,\omega )-\int_{0}^{t}\eta _{s}ds-\int_{0}^{t}\zeta _{s}dB_{s}
\end{equation*}%
is a non-increasing $G$-martingale. Clearly, $\zeta $ is determined uniquely
by $u$. We denote $\zeta $ by $D_{x}u$, which is consistent with the
definition of the operator $D_{x}$ for $u\in W_{G}^{1,2;p}(0,T)$.

\begin{proposition}
$W_{\mathcal{A}_G}^{\frac{1}{2},1;p}(0,T)$ is exactly the totality of all weak $G$-It%
\^{o} processes.
\end{proposition}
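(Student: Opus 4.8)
The plan is to prove the identity by matching two explicit descriptions of the same class of processes. On one side stands the representation of $W_{\mathcal{A}_G}^{\frac{1}{2},1;p}(0,T)$ established just above the statement,
\[
W_{\mathcal{A}_G}^{\frac{1}{2},1;p}(0,T)=\Big\{u_{0}+\int_{0}^{\cdot}\beta(s)\,ds+\int_{0}^{\cdot}\zeta(s)\,dB_{s}+K_{\cdot}:\ \beta\in M_{G}^{p}(0,T),\ \zeta\in H_{G}^{p}(0,T),\ K\in\mathcal{K}^{p}\Big\},
\]
and on the other the characterization of $\mathcal{K}^{p}$ recalled in the review of $G$-martingales: by \cite{P07b} every $K\in\mathcal{K}^{p}$ is a non-increasing $G$-martingale with $K_{0}=0$ and $K_{T}\in L_{G}^{p}(\Omega_{T})$, while by Theorem 5.4 of \cite{Song11b} the converse also holds. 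Hence $\mathcal{K}^{p}$ is exactly the class of non-increasing $G$-martingales starting at $0$ with terminal value in $L_{G}^{p}(\Omega_{T})$, and the whole proof reduces to transferring between the two representations.

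For the inclusion $W_{\mathcal{A}_G}^{\frac{1}{2},1;p}(0,T)\subset\{\text{weak }G\text{-It\^o processes}\}$, I would take $u=u_{0}+\int_{0}^{\cdot}\beta\,ds+\int_{0}^{\cdot}\zeta\,dB_{s}+K$ as above and observe that
\[
u(t,\omega)-\int_{0}^{t}\beta_{s}\,ds-\int_{0}^{t}\zeta_{s}\,dB_{s}=u_{0}+K_{t},
\]
which, being a constant plus a non-increasing $G$-martingale, is again a non-increasing $G$-martingale. Thus $u$ is a weak $G$-It\^o process with drift $\eta=\beta\in M_{G}^{p}(0,T)$ and $D_{x}u=\zeta\in H_{G}^{p}(0,T)$, consistently with the definition of $D_{x}$ given before the statement.

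For the reverse inclusion I would begin with a weak $G$-It\^o process $u\in S_{G}^{p}(0,T)$, so that for suitable $\eta\in M_{G}^{p}(0,T)$ and $\zeta\in H_{G}^{p}(0,T)$ the process $N_{t}:=u(t,\omega)-\int_{0}^{t}\eta_{s}\,ds-\int_{0}^{t}\zeta_{s}\,dB_{s}$ is a non-increasing $G$-martingale; setting $K_{t}:=N_{t}-N_{0}$ yields a non-increasing $G$-martingale with $K_{0}=0$ and $u=N_{0}+\int_{0}^{\cdot}\eta\,ds+\int_{0}^{\cdot}\zeta\,dB_{s}+K$. The only genuine verification is that $K_{T}\in L_{G}^{p}(\Omega_{T})$: since $u\in S_{G}^{p}(0,T)$ we have $u_{T}\in L_{G}^{p}(\Omega_{T})$; H\"older's inequality gives $\mathbb{E}^{G}[|\int_{0}^{T}\eta_{s}\,ds|^{p}]\leq T^{p-1}\|\eta\|_{M_{G}^{p}}^{p}<\infty$; and the $G$-stochastic integral $\int_{0}^{T}\zeta_{s}\,dB_{s}$ lies in $L_{G}^{p}(\Omega_{T})$ by the BDG-type estimate for $\zeta\in H_{G}^{p}(0,T)$. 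Since $K_{T}=u_{T}-u_{0}-\int_{0}^{T}\eta\,ds-\int_{0}^{T}\zeta\,dB_{s}$, it belongs to $L_{G}^{p}(\Omega_{T})$, and Theorem 5.4 of \cite{Song11b} then places $K$ in $\mathcal{K}^{p}$; by the representation above, $u\in W_{\mathcal{A}_G}^{\frac{1}{2},1;p}(0,T)$.

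The structural content of the argument is entirely carried by the two cited $G$-martingale theorems, and I expect the main obstacle to be precisely the invocation of the converse direction (Theorem 5.4 of \cite{Song11b}), which is what guarantees that an abstractly defined non-increasing $G$-martingale with $L_{G}^{p}$ terminal value actually belongs to the closure $\mathcal{K}^{p}$; once this characterization of $\mathcal{K}^{p}$ is granted, the remaining work is the routine integrability check for $K_{T}$ and the bookkeeping needed to identify the drift, diffusion and martingale parts of the two representations.
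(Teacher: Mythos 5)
Your proof is correct and takes essentially the same route the paper intends: the proposition is stated there without a separate proof precisely because it follows immediately from the representation of $W_{\mathcal{A}_G}^{\frac{1}{2},1;p}(0,T)$ in terms of $\mathcal{K}^{p}$ together with the characterization, recalled just before the statement, of $\mathcal{K}^{p}$ as exactly the non-increasing $G$-martingales with $K_{0}=0$ and $K_{T}\in L_{G}^{p}(\Omega_{T})$ (via \cite{P07b} and Theorem 5.4 of \cite{Song11b}). Your only addition is to spell out the integrability check $K_{T}\in L_{G}^{p}(\Omega_{T})$ (H\"older plus the BDG-type estimate), which the paper leaves implicit.
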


The following proposition provides closability of the metric
$d_{W_{\mathcal{A}_G}^{\frac{1}{2},1;p}}$
in $W_{G}^{1,2;p}(\eta)$.

\begin{proposition}
\label{prop5.2} The metric $d_{W_{\mathcal{A}_G}^{\frac{1}{2},1;p}}$ is closable in $%
W_{G}^{1,2;p}(\eta)$ under the distance of $S_{G}^{p}(0,T)$:
Let $\{u^{n}\}_{n=1}^{\infty }$ and $\{ \bar{u}^{n}\}_{n=1}^{\infty }$ be
two Cauchy sequences in $%
W_{G}^{1,2;p}(\eta)$ w.r.t. the metric $d_{%
W_{\mathcal{A}_G}^{\frac{1}{2},1;p}}$. If $\Vert u^{n}-\bar{u}^{n}\Vert
_{S_{G}^{p}}\rightarrow 0$, we have $d_{W_{\mathcal{A}_G}^{\frac{1}{2},1;p}}(u^{n},\bar{u%
}^{n})\rightarrow 0$.
\end{proposition}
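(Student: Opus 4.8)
The plan is to exploit the fact that the metric $d_{W_{\mathcal{A}_G}^{\frac{1}{2},1;p}}$ degenerates on $W_{G}^{1,2;p}(\eta)$ and then to invoke the uniqueness of the $dB$-coefficient of a weak $G$-It\^o process. First I would observe that every $u\in W_{G}^{1,2;p}(\eta)$ satisfies $\mathcal{A}_{G}u=-\eta$ by definition, so for $u^{n},\bar{u}^{n}\in W_{G}^{1,2;p}(\eta)$ the $\mathcal{A}_{G}$-term in $d_{W_{\mathcal{A}_G}^{\frac{1}{2},1;p}}(u^{n},\bar{u}^{n})$ vanishes identically. Writing $w^{n}=u^{n}-\bar{u}^{n}$, the metric reduces to
\begin{equation*}
d_{W_{\mathcal{A}_G}^{\frac{1}{2},1;p}}(u^{n},\bar{u}^{n})^{p}=\mathbb{E}^{G}\Big[\sup_{s\in[0,T]}|w^{n}_{s}|^{p}+\big(\int_{0}^{T}|D_{x}u^{n}_{s}-D_{x}\bar{u}^{n}_{s}|^{2}ds\big)^{p/2}\Big],
\end{equation*}
whose first term is exactly $\Vert w^{n}\Vert_{S_{G}^{p}}^{p}\to 0$ by hypothesis. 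Hence the whole statement comes down to proving that $D_{x}u^{n}-D_{x}\bar{u}^{n}\to 0$ in $H_{G}^{p}(0,T)$.

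Next I would pass to the limit. Since $\{u^{n}\}$ is Cauchy for $d_{W_{\mathcal{A}_G}^{\frac{1}{2},1;p}}$, it is Cauchy both in $S_{G}^{p}(0,T)$ (from the $\sup$-term) and in $H_{G}^{p}(0,T)$ (from the $D_{x}$-term); thus $u^{n}\to\hat{u}$ in $S_{G}^{p}$ and $D_{x}u^{n}\to\zeta$ in $H_{G}^{p}$ for some $\hat{u},\zeta$, and likewise $\bar{u}^{n}\to\hat{\bar{u}}$ with $D_{x}\bar{u}^{n}\to\bar{\zeta}$. The hypothesis $\Vert u^{n}-\bar{u}^{n}\Vert_{S_{G}^{p}}\to 0$ forces $\hat{u}=\hat{\bar{u}}$. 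Because each $u^{n}\in W_{G}^{1,2;p}(\eta)$ decomposes (Proposition \ref{prop3.7}) as $u^{n}_{t}=u^{n}_{0}-\int_{0}^{t}\eta_{s}ds+\int_{0}^{t}D_{x}u^{n}_{s}dB_{s}+K^{u,n}_{t}$ with $K^{u,n}=K^{D_{x}^{2}u^{n}}\in\mathcal{K}^{0}$ a non-increasing $G$-martingale, and since $\zeta\mapsto\int_{0}^{\cdot}\zeta_{s}dB_{s}$ is continuous from $H_{G}^{p}$ into $S_{G}^{p}$, the terminal martingale parts converge: $K^{u,n}\to\hat{u}-\hat{u}_{0}+\int_{0}^{\cdot}\eta_{s}ds-\int_{0}^{\cdot}\zeta_{s}dB_{s}$ in $S_{G}^{p}$. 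As $\mathcal{K}^{p}$ is by definition the $S_{G}^{p}$-closure of $\mathcal{K}^{0}$, this limit lies in $\mathcal{K}^{p}$ and is again a non-increasing $G$-martingale. Thus $\hat{u}$ is a weak $G$-It\^o process with $D_{x}\hat{u}=\zeta$; repeating the argument for $\{\bar{u}^{n}\}$ realizes the \emph{same} process $\hat{u}$ with $D_{x}\hat{u}=\bar{\zeta}$.

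The decisive step is then the uniqueness of the diffusion coefficient of a weak $G$-It\^o process, recorded just before the statement (``$\zeta$ is determined uniquely by $u$''): it yields $\zeta=\bar{\zeta}$. Consequently $D_{x}u^{n}-D_{x}\bar{u}^{n}\to\zeta-\bar{\zeta}=0$ in $H_{G}^{p}$, so the surviving term in the reduced metric tends to $0$ and $d_{W_{\mathcal{A}_G}^{\frac{1}{2},1;p}}(u^{n},\bar{u}^{n})\to 0$, which is the asserted closability.

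The main obstacle is precisely this uniqueness/limit-transfer step: one must ensure that the non-increasing $G$-martingale structure survives the $S_{G}^{p}$-limit (so that $\hat{u}$ is genuinely a weak $G$-It\^o process, which is where $\mathcal{K}^{p}$ being closed is used) and that the $dB$-coefficient of such a process is unique. The latter holds because two admissible decompositions of $\hat{u}$ differ by a stochastic integral $\int_{0}^{\cdot}(\zeta-\bar{\zeta})_{s}dB_{s}$ that coincides with a finite-variation process (the difference of two non-increasing $G$-martingales plus an absolutely continuous drift); its quadratic variation $\int_{0}^{\cdot}|\zeta-\bar{\zeta}|^{2}_{s}d\langle B\rangle_{s}$ must then vanish, and the standing assumption $\overline{\sigma}^{2}>\underline{\sigma}^{2}$ (with $d\langle B\rangle_{s}\geq\underline{\sigma}^{2}ds$) converts this into $\zeta=\bar{\zeta}$ in $H_{G}^{p}$. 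A more computational alternative would bypass the uniqueness citation by applying $G$-It\^o's formula to $|w^{n}|^{2}$ and estimating $\int_{0}^{T}w^{n}_{s}d(K^{u,n}-K^{\bar{u},n})_{s}$ through the non-increasing structure of the $K$-parts (total variation equal to $|K_{T}|$) together with a uniform $L^{q}$-bound on $K^{u,n}_{T},K^{\bar{u},n}_{T}$ coming from the boundedness of $\{u^{n}\},\{\bar{u}^{n}\}$ in the metric; this route reaches the same conclusion but needs the $L^{p}$-version of the energy estimate for general $p>1$.
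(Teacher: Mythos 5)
Your proof is correct and follows essentially the same route as the paper's own argument: pass to the $d_{W_{\mathcal{A}_G}^{\frac{1}{2},1;p}}$-Cauchy limits, observe that the hypothesis $\Vert u^{n}-\bar{u}^{n}\Vert_{S_{G}^{p}}\to 0$ forces the two limits to coincide, and then invoke uniqueness of the martingale part of the decomposition --- the paper's citation of ``the uniqueness of the decomposition theorem of $G$-martingales'' is exactly your ``uniqueness of the $dB$-coefficient of a weak $G$-It\^o process'' --- to conclude $\zeta=\bar{\zeta}$ and hence $D_{x}u^{n}-D_{x}\bar{u}^{n}\to 0$ in $H_{G}^{p}(0,T)$. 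The only caveat, immaterial here since the paper itself asserts this uniqueness without proof and you may simply cite it, is that your sketched justification of it needs the non-degeneracy $\underline{\sigma}^{2}>0$ (so that vanishing of $\int_{0}^{\cdot}|\zeta-\bar{\zeta}|^{2}d\langle B\rangle_{s}$ yields $\zeta=\bar{\zeta}$ in $H_{G}^{p}$), which is not implied by the standing assumption $\overline{\sigma}^{2}>\underline{\sigma}^{2}$ alone.
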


\begin{proof}
The Cauchy limits of $u^{n},\bar{u}^{n}\in W_{G}^{1,2;p}(0,T)$ are
denoted by
\begin{equation*}
u(t,\omega )=u(0,\omega )-\int_{0}^{t}\eta (s,\omega
)ds+\int_{0}^{t}v(s,\omega )dB_{s}+K_{t}
\end{equation*}%
and%
\begin{equation*}
\bar{u}(t,\omega )=\bar{u}(0,\omega )-\int_{0}^{t}\eta (s,\omega
)ds+\int_{0}^{t}\bar{v}(s,\omega )dB_{s}+\bar{K}_{t}
\end{equation*}%
respectively, with $u(t,\omega )\equiv \bar{u}(t,\omega )$. Thus $%
\int_{0}^{t}v(s,\omega )dB_{s}+K_{t}\equiv \int_{0}^{t}\bar{v}(s,\omega )dB_{s}+\bar{K}_{t}$. It follows from the uniqueness of
decomposition theorem of $G$-martingale that $v(t,\omega )\equiv \bar{v}(t,\omega )$ and $K_{t}\equiv \bar{K}_{t}$.
\end{proof}

\begin{proposition}
\label{prop5.3} Let $\eta \in M_{G}^{p}(0,T)$ be given. Then
$u\in W_{\mathcal{A}_G}^{\frac{1}{2},1;p}(\eta )$ if and only if $u\in W_{\mathcal{A}_G}^{\frac{1}{2},1;p}(0,T)$
and $u_{t}-\int_{0}^{t}D_{x}u(s)dB_{s}-\int_{0}^{t}\eta _{s}ds$ is a
non-increasing $G$-martingale.
\end{proposition}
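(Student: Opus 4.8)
The plan is to derive both implications directly from the explicit description of the two spaces recorded above, namely that $W_{\mathcal{A}_G}^{\frac{1}{2},1;p}(0,T)$ is the totality of weak $G$-It\^{o} processes (with diffusion coefficient $D_{x}u$ determined uniquely by $u$) and that
$$W_{\mathcal{A}_G}^{\frac{1}{2},1;p}(\eta)=\Big\{u=u_{0}+\int_{0}^{t}\eta(s)ds+\int_{0}^{t}\zeta(s)dB_{s}+K_{t}:\ \zeta\in H_{G}^{p}(0,T),\ K\in\mathcal{K}^{p}\Big\},$$
together with the structural fact, quoted from Theorem 5.4 in \cite{Song11b}, that $\mathcal{K}^{p}$ is exactly the class of non-increasing $G$-martingales whose terminal value lies in $L_{G}^{p}(\Omega_{T})$.

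For the ``only if'' part I would take $u\in W_{\mathcal{A}_G}^{\frac{1}{2},1;p}(\eta)$ and write it in the displayed form. As the drift $\eta\in M_{G}^{p}(0,T)$, $\zeta\in H_{G}^{p}(0,T)$ and $K\in\mathcal{K}^{p}$, such a $u$ is a weak $G$-It\^{o} process, hence $u\in W_{\mathcal{A}_G}^{\frac{1}{2},1;p}(0,T)$ and, by uniqueness of the diffusion coefficient, $D_{x}u=\zeta$. Therefore
$$u_{t}-\int_{0}^{t}D_{x}u(s)dB_{s}-\int_{0}^{t}\eta_{s}ds=u_{0}+K_{t},$$
and since $u_{0}$ is a constant and $K\in\mathcal{K}^{p}$ is a non-increasing $G$-martingale, so is the right-hand side. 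This direction is routine.

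For the ``if'' part I would start from $u\in W_{\mathcal{A}_G}^{\frac{1}{2},1;p}(0,T)$ with $M_{t}:=u_{t}-\int_{0}^{t}D_{x}u(s)dB_{s}-\int_{0}^{t}\eta_{s}ds$ a non-increasing $G$-martingale, and read the representation off backwards:
$$u_{t}=M_{0}+\int_{0}^{t}\eta_{s}ds+\int_{0}^{t}D_{x}u(s)dB_{s}+(M_{t}-M_{0}).$$
Setting $u_{0}:=M_{0}$ (a constant) and $K:=M-M_{0}$, the process $K$ is a non-increasing $G$-martingale with $K_{0}=0$, and $M_{T}\in L_{G}^{p}(\Omega_{T})$ because $u\in S_{G}^{p}$, $D_{x}u\in H_{G}^{p}$ and $\eta\in M_{G}^{p}$. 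The converse representation theorem then yields $K\in\mathcal{K}^{p}$, so $u$ has precisely the form appearing in the description of $W_{\mathcal{A}_G}^{\frac{1}{2},1;p}(\eta)$, whence $u\in W_{\mathcal{A}_G}^{\frac{1}{2},1;p}(\eta)$.

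The only place where genuine content enters is this last step: one must know that the non-increasing $G$-martingale $M-M_{0}$, obtained abstractly from the hypothesis, actually belongs to the concrete class $\mathcal{K}^{p}$, the $S_{G}^{p}$-closure of $\{\frac{1}{2}\int_{0}^{\cdot}\gamma\, d\langle B\rangle-\int_{0}^{\cdot}G(\gamma)\,ds\}$. This is exactly Song's converse characterization of $G$-martingales, and it is what forces the integrability check $M_{T}\in L_{G}^{p}$. A more self-contained alternative, which I would record as a cross-check, is to expand $u$ as a weak $G$-It\^{o} process $u_{0}+\int_{0}^{t}\beta\,ds+\int_{0}^{t}D_{x}u\,dB_{s}+\tilde{K}_{t}$, so that $M_{t}=u_{0}+\int_{0}^{t}(\beta-\eta)\,ds+\tilde{K}_{t}$; comparing with the canonical decomposition of $M$ and using the distinguishability of the $ds$ and $d\langle B\rangle$ parts (Corollary 3.5 in Song (2012), as in Lemma \ref{lem3.9}) pins down $\beta=\eta$.
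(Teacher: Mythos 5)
Your main argument is correct, and it is essentially the proof the paper has in mind: the paper states Proposition \ref{prop5.3} with no proof at all, treating it as an immediate consequence of the material assembled just before it, namely the three facts you invoke --- the identification of $W_{\mathcal{A}_G}^{\frac{1}{2},1;p}(0,T)$ with the weak $G$-It\^{o} processes (with $D_{x}u$ the uniquely determined diffusion coefficient), the displayed description of $W_{\mathcal{A}_G}^{\frac{1}{2},1;p}(\eta)$, and Song's Theorem 5.4 (\cite{Song11b}) identifying $\mathcal{K}^{p}$ with the class of non-increasing $G$-martingales $K$ with $K_{0}=0$ and $K_{T}\in L_{G}^{p}(\Omega_{T})$. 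The verification that $M_{T}\in L_{G}^{p}(\Omega_{T})$ (from $u\in S_{G}^{p}$, $D_{x}u\in H_{G}^{p}$, $\eta\in M_{G}^{p}$) is indeed the one genuinely substantive step in the ``if'' direction. One interpretive remark: the paper's sign conventions are inconsistent (the definition $\mathcal{A}_{G}(u)+\eta=0$ would produce drift $-\eta$, while the displayed description of $W_{\mathcal{A}_G}^{\frac{1}{2},1;p}(\eta)$ and the statement of Proposition \ref{prop5.3} both use drift $+\eta$); you silently adopted the latter reading, which is the one under which the proposition is true.

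The ``self-contained alternative'' you offer as a cross-check, however, is not valid and should be dropped. Comparing $M_{t}-u_{0}=\int_{0}^{t}(\beta_{s}-\eta_{s})ds+\tilde{K}_{t}$ with a decomposition of $M$ and concluding $\beta=\eta$ presupposes exactly the uniqueness that the paper warns is \emph{not} known: Section 5.3 states explicitly that one cannot in general distinguish $\int_{0}^{\cdot}\eta_{s}ds$, $\eta\in M_{G}^{p}(0,T)$, from non-increasing $G$-martingales, and this is precisely why the paper introduces the smaller space $\tilde{M}_{G}^{p}(0,T)$ and proves Lemma \ref{lem-uni} only for drifts in that subspace. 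Lemma \ref{lem3.9} (Song's Corollary 3.5) does not rescue the argument: it applies to processes written \emph{exactly} as $\int_{0}^{t}\zeta_{s}ds+\int_{0}^{t}v_{s}dB_{s}+\frac{1}{2}\int_{0}^{t}w_{s}d\langle B\rangle_{s}$ with $\zeta,w\in M_{G}^{p}(0,T)$, whereas a general element of $\mathcal{K}^{p}$ --- equivalently, a general non-increasing $G$-martingale --- is only an $S_{G}^{p}$-limit of such processes and is not known to admit such a representation; whether it does is the representation conjecture recalled around (\ref{MR}). Fortunately your main proof never uses this step: membership in $W_{\mathcal{A}_G}^{\frac{1}{2},1;p}(\eta)$ requires only the \emph{existence} of some representation with drift $\eta$, which your hypothesis together with Song's converse characterization of $\mathcal{K}^{p}$ supplies directly.
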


\subsection{Fully nonlinear path dependent PDEs}

Let's formulate the weak solution to the path dependent PDE (\ref{PPDE3'}-%
\ref{PPDE4'}) in the $G$-Sobolev space $W_{\mathcal{A}_G}^{\frac{1}{2},1;p}(0,T)$.

\begin{definition}
We say $u\in W_{\mathcal{A}_G}^{\frac{1}{2},1;p}(0,T)$ is a $W_{\mathcal{A}_G}^{\frac{1}{2},1;p}$-solution to the
path dependent PDE (\ref{PPDE3'}-\ref{PPDE4'}) if
\begin{equation*}
u(T,\omega )=\xi (\omega ),\,\, u\in W_{\mathcal{A}_G}^{\frac{1}{2},1;p}(\eta )\,\, \textrm{ with } \,\,
\eta _{t}(\omega )=g(t,\omega ,u(t,\omega ),D_{x}u(t,\omega )).
\end{equation*}
\end{definition}

The following theorem says that the  $W_{\mathcal{A}_G}^{\frac{1}{2},1;p}$-solution of the
PPDE (\ref{PPDE3'}-\ref{PPDE4'}) corresponds exactly to the solution of $G$-BSDE (\ref%
{GBSDE-s4}) studied in [HJPS2012]. 

\begin{theorem}
\label{thm5.5}(i) Assume $(Y,Z,K)$ is a solution to the backward SDE (\ref%
{GBSDE-s4}) and $g(t,\omega ,Y_{t},Z_{t})\in M_{G}^{p}(0,T)$.

Then we have $u(t,\omega ):=Y_{t}(\omega )\in W_{\mathcal{A}_G}^{\frac{1}{2},1;p}(0,T)$
with $D_{x}u(t,\omega )=Z_{t}(\omega )$. Moreover, we have
\begin{equation*}
u(T,\omega )=\xi (\omega ),\ u\in W_{\mathcal{A}_G}^{\frac{1}{2},1;p}(\eta )\ with\
\eta _{t}(\omega )=g(t,\omega ,u(t,\omega ),D_{x}u(t,\omega )).
\end{equation*}%
Namely $u$ is a $W_{\mathcal{A}_G}^{\frac{1}{2},1;p}$-solution to the path dependent PDE
(\ref{PPDE3'}-\ref{PPDE4'}).

(ii) Let $u\in W_{\mathcal{A}_G}^{\frac{1}{2},1;p}(0,T)$ be a $W_{\mathcal{A}_G}^{\frac{1}{2},1;p}$%
-solution to the path dependent PDE (\ref{PPDE3'}-\ref{PPDE4'}). Set
\begin{equation*}
K_{t}=u(t,\omega )+\int_{0}^{t}g(s,\omega ,u(s,\omega ),D_{x}u(s,\omega
))ds-\int_{0}^{t}D_{x}u(s,\omega )dB_{s}.
\end{equation*}%
Then $(u,D_{x}u,K)$ is a solution to the backward SDE (\ref{GBSDE-s4}).
\end{theorem}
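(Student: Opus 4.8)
The plan is to read Theorem \ref{thm5.5} as a dictionary translation between the $G$-BSDE (\ref{GBSDE-s4}) and the weak Sobolev formulation, with all the genuine analysis already absorbed into Proposition \ref{prop5.3} and into the proposition identifying $W_{\mathcal{A}_G}^{\frac{1}{2},1;p}(0,T)$ with the totality of weak $G$-It\^{o} processes. For part (i), I would first put (\ref{GBSDE-s4}) into forward form: evaluating at $t=0$ and subtracting (and normalizing $K_{0}=0$) gives
\begin{equation*}
Y_{t}=Y_{0}-\int_{0}^{t}f(s,Y_{s},Z_{s})\,ds+\int_{0}^{t}Z_{s}\,dB_{s}+K_{t}.
\end{equation*}
By hypothesis $f(\cdot,Y,Z)\in M_{G}^{p}(0,T)$, $Z\in H_{G}^{p}(0,T)$, and $K$ is a non-increasing $G$-martingale, so $Y$ is a weak $G$-It\^{o} process with drift $-f(\cdot,Y,Z)$ and diffusion coefficient $Z$. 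Invoking the identification of $W_{\mathcal{A}_G}^{\frac{1}{2},1;p}(0,T)$ with the weak $G$-It\^{o} processes yields $u:=Y\in W_{\mathcal{A}_G}^{\frac{1}{2},1;p}(0,T)$, and the uniqueness of the diffusion coefficient (the very definition of $D_{x}$ for weak $G$-It\^{o} processes) gives $D_{x}u=Z$. Reading (\ref{GBSDE-s4}) at $t=T$ gives the terminal condition $u(T,\omega)=\xi(\omega)$.

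Still in part (i), to place $u$ in $W_{\mathcal{A}_G}^{\frac{1}{2},1;p}(\eta)$ with $\eta_{t}=f(t,u,D_{x}u)$ I would appeal directly to Proposition \ref{prop5.3}. From the forward form, $u_{t}-\int_{0}^{t}D_{x}u(s)\,dB_{s}+\int_{0}^{t}f(s,u_{s},D_{x}u_{s})\,ds=Y_{0}+K_{t}$ is a non-increasing $G$-martingale, which is precisely the defining property (via Proposition \ref{prop5.3}, under the drift convention $\mathcal{A}_{G}u=-\eta$) of membership in $W_{\mathcal{A}_G}^{\frac{1}{2},1;p}(\eta)$ for $\eta=f(\cdot,u,D_{x}u)$. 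This is exactly the statement that $u$ is a $W_{\mathcal{A}_G}^{\frac{1}{2},1;p}$-solution of (\ref{PPDE3'}-\ref{PPDE4'}).

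For part (ii) I would simply reverse these steps. Given a weak solution $u$, by definition $u\in W_{\mathcal{A}_G}^{\frac{1}{2},1;p}(\eta)$ with $\eta=f(\cdot,u,D_{x}u)$, so Proposition \ref{prop5.3} tells us precisely that the process $K_{t}=u_{t}+\int_{0}^{t}f(s,u_{s},D_{x}u_{s})\,ds-\int_{0}^{t}D_{x}u_{s}\,dB_{s}$ is a non-increasing $G$-martingale, and that $K_{T}\in L_{G}^{p}(\Omega_{T})$ so that $K\in\mathcal{K}^{p}$. Rearranging this identity and inserting $u_{T}=\xi$ produces
\begin{equation*}
u_{t}=\xi+\int_{t}^{T}f(s,u_{s},D_{x}u_{s})\,ds-\int_{t}^{T}D_{x}u_{s}\,dB_{s}-(K_{T}-K_{t}),
\end{equation*}
which is (\ref{GBSDE-s4}) with $(Y,Z,K)=(u,D_{x}u,K)$; hence $(u,D_{x}u,K)$ solves the $G$-BSDE.

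The substantive content is carried entirely by Proposition \ref{prop5.3} and the weak $G$-It\^{o} characterization, both of which rest on the uniqueness of the $G$-martingale decomposition (\ref{MD}). Consequently the theorem itself requires only bookkeeping care: verifying the integrability memberships $f(\cdot,u,D_{x}u)\in M_{G}^{p}(0,T)$ (supplied in (i) by the assumption $g(t,Y_{t},Z_{t})\in M_{G}^{p}(0,T)$, the coefficient $g$ here being the $f$ of (\ref{GBSDE-s4})) and $K_{T}\in L_{G}^{p}(\Omega_{T})$ so that $K\in\mathcal{K}^{p}$, and correctly matching the sign of the drift between the forward form of the BSDE and the drift convention built into $W_{\mathcal{A}_G}^{\frac{1}{2},1;p}(\eta)$ via $\mathcal{A}_{G}u=-\eta$. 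I expect this last sign matching to be the only place where a careless misstep is easy, precisely because all the analytic difficulty has been front-loaded into the earlier propositions.
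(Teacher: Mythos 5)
Your proposal is correct and is precisely the argument the paper intends: Theorem \ref{thm5.5} is stated in the paper without proof, as an immediate consequence of Proposition \ref{prop5.3}, the identification of $W_{\mathcal{A}_G}^{\frac{1}{2},1;p}(0,T)$ with the totality of weak $G$-It\^{o} processes, and the $G$-martingale structure results, which is exactly the machinery you invoke in both directions (forward form of the BSDE $\Rightarrow$ weak $G$-It\^{o} process with $D_{x}u=Z$ and the non-increasing $G$-martingale property for part (i); Proposition \ref{prop5.3} reversed plus $u_{T}=\xi$ for part (ii)). Your explicit adoption of the drift convention $\mathcal{A}_{G}u=-\eta$ is also the right call: it is the only reading consistent with the definition $W_{G}^{1,2;p}(\eta)=\{u\in W_{G}^{1,2;p}(0,T):\mathcal{A}_{G}(u)+\eta=0\}$ and with the formula for $K_{t}$ in part (ii), whereas Proposition \ref{prop5.3} as printed (drift $+\eta$) carries a sign typo, so flagging that sign matching as the one delicate point was warranted rather than pedantic.
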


Assume that the function $g(t,\omega,y,z):[0,T]\times \Omega_{T}\times
R\times R\rightarrow R$ satisfies the following assumption: there exists
some $\beta>1$ such that

\begin{description}
\item[($\mathcal{H}$1)] for any $y$,$z$, $g(t,\omega,y,z)\in M_{G}^{\beta
}(0,T)$;

\item[($\mathcal{H}$2)] $|g(t,\omega,y,z)-g(t,\omega,y^{\prime},z^{\prime
})|\leq L(|y-y^{\prime}|+|z-z^{\prime}|)$ for some constant $L>0$.
\end{description}

\begin{corollary}
\label{cor5.7} Assume $\xi \in L_{G}^{\beta }(\Omega _{T})$ and $g$
satisfies ($\mathcal{H}$1) and ($\mathcal{H}$2) for some $\beta >1$. Then,
for each $p\in (1,\beta )$, the path dependent PDE (\ref%
{PPDE3'}-\ref{PPDE4'}) has a unique $W_{\mathcal{A}_G}^{\frac{1}{2},1;p}$%
-solution $u\in W_{\mathcal{A}_G}^{\frac{1}{2},1;p}(0,T)$.
\end{corollary}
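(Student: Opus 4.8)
The plan is to read off the corollary from the one-to-one correspondence of Theorem \ref{thm5.5} combined with the existence and uniqueness theory for $G$-BSDEs of [HJPS12] recalled in the Appendix. The entire argument is a translation rather than a fresh construction: the analytic problem (\ref{PPDE3'}-\ref{PPDE4'}) posed in $W_{\mathcal{A}_G}^{\frac{1}{2},1;p}(0,T)$ is mapped, through $u\mapsto(u,D_{x}u,K)$, onto the probabilistic problem (\ref{GBSDE-s4}) and back, so well-posedness of the path dependent PDE is equivalent to well-posedness of the $G$-BSDE, which is already available.

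First I would confirm that $g$ fits the hypotheses of the $G$-BSDE well-posedness theorem with $f=g$. Condition $(\mathcal{H}2)$ is precisely the Lipschitz requirement in $(y,z)$, while $(\mathcal{H}1)$ supplies the integrability of the coefficient, so together with $\xi\in L_{G}^{\beta}(\Omega_{T})$ the cited theorem yields, for each fixed $p\in(1,\beta)$, a unique triple $(Y,Z,K)$ solving (\ref{GBSDE-s4}) in the appropriate solution spaces (with $Y\in S_{G}^{p}(0,T)$, $Z\in H_{G}^{p}(0,T)$ and $K\in\mathcal{K}^{p}$). For the existence half I would then verify the integrability hypothesis $g(t,\omega,Y_{t},Z_{t})\in M_{G}^{p}(0,T)$ that Theorem \ref{thm5.5}(i) demands: from $(\mathcal{H}2)$ one has $|g(t,\omega,Y_{t},Z_{t})|\leq|g(t,\omega,0,0)|+L(|Y_{t}|+|Z_{t}|)$, where the free term lies in $M_{G}^{\beta}(0,T)\subset M_{G}^{p}(0,T)$ by $(\mathcal{H}1)$ and the remaining terms are controlled by the a priori estimates on the $G$-BSDE solution. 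Theorem \ref{thm5.5}(i) then gives $u:=Y\in W_{\mathcal{A}_G}^{\frac{1}{2},1;p}(0,T)$ with $D_{x}u=Z$, together with the two identities $u(T,\omega)=\xi(\omega)$ and $u\in W_{\mathcal{A}_G}^{\frac{1}{2},1;p}(\eta)$ for $\eta_{t}=g(t,\omega,u,D_{x}u)$, i.e. $u$ is a $W_{\mathcal{A}_G}^{\frac{1}{2},1;p}$-solution.

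Uniqueness is the reverse direction. Given any $W_{\mathcal{A}_G}^{\frac{1}{2},1;p}$-solution $u$, Theorem \ref{thm5.5}(ii) constructs $K$ and shows that $(u,D_{x}u,K)$ solves the same $G$-BSDE (\ref{GBSDE-s4}). By the uniqueness half of the cited $G$-BSDE theorem this triple must coincide with $(Y,Z,K)$, and in particular $u=Y$; hence the $W_{\mathcal{A}_G}^{\frac{1}{2},1;p}$-solution is unique. Combining the two directions for the single value of $p\in(1,\beta)$ under consideration gives the claim.

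The main obstacle I anticipate is bookkeeping around the function spaces rather than any new analytic difficulty. One must confirm that the spaces in which [HJPS12] delivers $(Y,Z,K)$ are exactly those needed to place $u$ in $W_{\mathcal{A}_G}^{\frac{1}{2},1;p}(0,T)$ and to make the nonlinearity $g(\cdot,Y,Z)$ belong to $M_{G}^{p}(0,T)$; the delicate point is reconciling the $H_{G}^{p}$-control of $Z$ with the $M_{G}^{p}$-integrability required by the hypothesis of Theorem \ref{thm5.5}(i), and checking that $p$ may be chosen anywhere in $(1,\beta)$. Once these integrability matchings are settled, no further work remains, since Theorem \ref{thm5.5} already carries all the structural content of the correspondence.
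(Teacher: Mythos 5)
Your proposal is correct and follows essentially the same route as the paper's own proof: existence by solving the $G$-BSDE (\ref{GBSDE-s4}) via Theorem \ref{thmA}, checking $g(t,\omega,Y_{t},Z_{t})\in M_{G}^{p}(0,T)$ from ($\mathcal{H}$1)--($\mathcal{H}$2), and invoking Theorem \ref{thm5.5}(i); uniqueness by mapping any $W_{\mathcal{A}_G}^{\frac{1}{2},1;p}$-solution back to a $G$-BSDE solution via Theorem \ref{thm5.5}(ii) and using the uniqueness in Theorem \ref{thmA}. Your version merely spells out the Lipschitz/integrability bookkeeping that the paper compresses into one line.
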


\begin{proof}
Uniqueness is straightforward from Theorem \ref{thm5.5} and Theorem \ref%
{thmA}.

Existence. By Theorem \ref{thmA} we know that the backward SDE (\ref%
{GBSDE-s4}) has a solution $(Y,Z,K)$. By the assumption ($\mathcal{H}$1) and
($\mathcal{H}$2), we conclude $g(t,\omega ,Y_{t}(\omega ),Z_{t}(\omega ))\in
M_{G}^{p}(0,T)$. So we get the existence from Theorem \ref{thm5.5}.
\end{proof}

\begin{corollary}
\label{cor5.6} $u\in S_{G}^{p}(0,T)$ is a $G$-martingale if and only if $%
u\in \mathcal{W}_{G}^{1,2;p}(0)$.
\end{corollary}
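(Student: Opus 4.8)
The plan is to derive both implications directly from the explicit description of $W_{\mathcal{A}_G}^{\frac{1}{2},1;p}(0)$ displayed above together with the structure theory of $G$-martingales recalled in this section. Specializing the characterization of $W_{\mathcal{A}_G}^{\frac{1}{2},1;p}(\eta)$ to $\eta=0$ gives
\[
W_{\mathcal{A}_G}^{\frac{1}{2},1;p}(0)=\{u=u_{0}+\int_{0}^{t}\zeta(s)dB_{s}+K_{t}:\ \zeta\in H_{G}^{p}(0,T),\ K\in\mathcal{K}^{p}\},
\]
and we will freely use that every $K\in\mathcal{K}^{p}$ is a non-increasing $G$-martingale with $K_{T}\in L_{G}^{p}(\Omega_{T})$, the converse being Theorem 5.4 of \cite{Song11b}.

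For the ``if'' direction, suppose $u\in W_{\mathcal{A}_G}^{\frac{1}{2},1;p}(0)$, so $u_{t}=u_{0}+N_{t}+K_{t}$ with $N_{t}:=\int_{0}^{t}\zeta\,dB_{s}$ and $K\in\mathcal{K}^{p}$. The stochastic integral $N$ is a symmetric $G$-martingale, so its increments satisfy $\mathbb{E}_{s}^{G}[N_{t}-N_{s}]=-\mathbb{E}_{s}^{G}[-(N_{t}-N_{s})]=0$, while $K$ is itself a $G$-martingale. First I would peel off the $\mathcal{F}_{s}$-measurable part, writing $\mathbb{E}_{s}^{G}[u_{t}]=u_{0}+N_{s}+\mathbb{E}_{s}^{G}[(N_{t}-N_{s})+K_{t}]$, and then use additivity of the conditional $G$-expectation against a summand with symmetric conditional expectation to split $\mathbb{E}_{s}^{G}[(N_{t}-N_{s})+K_{t}]=\mathbb{E}_{s}^{G}[N_{t}-N_{s}]+\mathbb{E}_{s}^{G}[K_{t}]=K_{s}$. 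This yields $\mathbb{E}_{s}^{G}[u_{t}]=u_{s}$, i.e. $u$ is a $G$-martingale.

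For the ``only if'' direction, let $u\in S_{G}^{p}(0,T)$ be a $G$-martingale. Since $u\in S_{G}^{p}$ we have $u_{T}\in L_{G}^{p}(\Omega_{T})$ and, being a $G$-martingale, $u_{t}=\mathbb{E}_{t}^{G}[u_{T}]$. As $p>1$, the decomposition (\ref{MD}) of \cite{Song11} applies with $\xi=u_{T}$ and gives $u_{t}=u_{0}+\int_{0}^{t}Z_{s}dB_{s}+K_{t}$ with $Z\in H_{G}^{p}(0,T)$ and $K$ a non-increasing $G$-martingale. It remains to identify $K$ as an element of $\mathcal{K}^{p}$: since $K_{T}=u_{T}-u_{0}-\int_{0}^{T}Z_{s}dB_{s}\in L_{G}^{p}(\Omega_{T})$, the converse direction of Theorem 5.4 in \cite{Song11b} gives $K\in\mathcal{K}^{p}$, whence $u\in W_{\mathcal{A}_G}^{\frac{1}{2},1;p}(0)$ by the explicit description above. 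Alternatively one may read this off from Proposition \ref{prop5.3} with $\eta=0$, which characterizes $W_{\mathcal{A}_G}^{\frac{1}{2},1;p}(0)$ as the weak $G$-It\^{o} processes for which $u_{t}-\int_{0}^{t}D_{x}u\,dB_{s}$ is a non-increasing $G$-martingale.

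I expect the only delicate point to be the additivity step in the ``if'' direction: one must verify that the increment $N_{t}-N_{s}$ of the stochastic integral has symmetric conditional $G$-expectation, so that it can be pulled through the sublinear $\mathbb{E}_{s}^{G}$ with equality rather than merely the supermartingale inequality. Everything else is a direct assembly of the cited representation and decomposition theorems and of the identification of $\mathcal{K}^{p}$ with the non-increasing $G$-martingales having $L_{G}^{p}$ terminal value.
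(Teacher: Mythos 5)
Correct, and essentially the paper's own argument: the paper's one-line proof invokes the $G$-martingale decomposition theorem to identify $G$-martingales in $S_{G}^{p}(0,T)$ with solutions of the backward SDE (\ref{GBSDE-s4}) with $f=0$, and then (implicitly, via Theorem \ref{thm5.5}) with elements of $W_{\mathcal{A}_G}^{\frac{1}{2},1;p}(0)$. You simply unpack the two halves that the paper leaves to citations --- the direct sublinearity/symmetry check that a stochastic integral plus a non-increasing $G$-martingale is again a $G$-martingale, and the use of the decomposition (\ref{MD}) together with Theorem 5.4 of \cite{Song11b} to identify the non-increasing part as an element of $\mathcal{K}^{p}$ --- so the substance is identical.
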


\begin{proof}
By the $G$-martingale decomposition theorem, $u\in S_{G}^{p}(0,T)$ is a $G$%
-martingale if and only if $u$ is a solution of backward SDE (\ref{GBSDE-s4}%
) with $f=0$.
\end{proof}

\subsection{Weak $G$-It\^o processes}

For a weak $G$-It\^{o} process
\begin{equation*}
u=u_{0}+\int_{0}^{t}\eta (s,\omega )ds+\int_{0}^{t}\zeta (s,\omega
)dB_{s}+K_{t},
\end{equation*}%
generally we don't know whether the above decomposition is unique. More
precisely, we can't distinguish $\int_{0}^{\cdot }\eta _{s}ds,\eta \in
M_{G}^{p}(0,T)$ from non-increasing $G$-martingales. In this section, we
confine $\eta $ in a subspace of $M_{G}^{p}(0,T)$ to guarantee that the
decomposition is unique.

For a step process $\eta $, set $\Vert \eta \Vert
_{\tilde{M}_{G}^{p}}^{2}=\int_{0}^{T}\mathbb{E}^{G}[|\eta _{s}|^{2}]ds$. Denote by $%
\tilde{M}_{G}^{p}(0,T)$ the completion of the collection of step processes with
respect to the norm $\Vert \cdot \Vert _{\tilde{M}_{G}^{p}}$.

\begin{proposition}\label{prop5.8}
For $p\geq1$, $\tilde{M}_{G}^{p}(0,T)$ is a subspace of $M_{G}^{p}(0,T)$.
\end{proposition}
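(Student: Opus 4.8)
The plan is to reduce the whole statement to a single norm comparison on step processes, $\Vert\eta\Vert_{M_G^p}\le\Vert\eta\Vert_{\tilde{M}_G^p}$, and then to push this inequality through the two completions. The only structural inputs are the sub-additivity and positive homogeneity of $\mathbb{E}^G$ (equivalently its representation $\mathbb{E}^G[\,\cdot\,]=\sup_{P\in\mathcal{P}}E_P[\,\cdot\,]$) together with the separability of $L_G^p(\Omega_T)$. No $G$-martingale representation or $G$-It\^o calculus is needed here.

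First I would establish the inequality on a step process $\eta\in M^0(0,T)$, say $\eta_s=\sum_{k=0}^{n-1}\varphi_k1_{(t_k,t_{k+1}]}(s)$ with $\varphi_k$ bounded. Since $\int_0^T|\eta_s|^p\,ds=\sum_k(t_{k+1}-t_k)|\varphi_k|^p$, sub-additivity and $\mathbb{E}^G[\lambda X]=\lambda\mathbb{E}^G[X]$ for $\lambda\ge0$ give
\[
\Vert\eta\Vert_{M_G^p}^p=\mathbb{E}^G\Big[\sum_{k}(t_{k+1}-t_k)|\varphi_k|^p\Big]\le\sum_{k}(t_{k+1}-t_k)\mathbb{E}^G[|\varphi_k|^p]=\int_0^T\mathbb{E}^G[|\eta_s|^p]\,ds=\Vert\eta\Vert_{\tilde{M}_G^p}^p.
\]
This is the entire analytic content; equivalently it is the elementary fact $\sup_{P}\int_0^T E_P[|\eta_s|^p]\,ds\le\int_0^T\sup_{P}E_P[|\eta_s|^p]\,ds$, which holds for any nonnegative measurable integrand and therefore extends the inequality $\Vert v\Vert_{M_G^p}\le\Vert v\Vert_{\tilde{M}_G^p}$ to arbitrary processes $v$. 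The remaining steps are soft functional analysis.

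Because $M^0(0,T)$ is dense in both completions and the inequality holds on it, every $\Vert\cdot\Vert_{\tilde{M}_G^p}$-Cauchy sequence of step processes is $\Vert\cdot\Vert_{M_G^p}$-Cauchy, so the identity on $M^0(0,T)$ extends to a bounded linear map $\iota:\tilde{M}_G^p(0,T)\to M_G^p(0,T)$ with $\Vert\iota u\Vert_{M_G^p}\le\Vert u\Vert_{\tilde{M}_G^p}$. To say that $\tilde{M}_G^p(0,T)$ is a subspace of $M_G^p(0,T)$ is precisely to say that $\iota$ is injective. For this I would realize $\tilde{M}_G^p(0,T)$ concretely: step processes are simple $L_G^p(\Omega_T)$-valued functions of $s$, and the completion of simple functions under $(\int_0^T\Vert\cdot\Vert_{L_G^p}^p\,ds)^{1/p}$ is the Bochner space $L^p([0,T];L_G^p(\Omega_T))$, so each $u\in\tilde{M}_G^p(0,T)$ is an honest process with $\int_0^T\mathbb{E}^G[|u_s|^p]\,ds<\infty$. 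Approximating $u$ by step processes $\eta^n$, the (general) inequality forces $\eta^n\to u$ in $M_G^p$ as well, whence $\iota u$ is represented by the very same process $u$; thus $\iota u=0$ means $\mathbb{E}^G[\int_0^T|u_s|^p\,ds]=0$, and injectivity amounts to deducing $\int_0^T\mathbb{E}^G[|u_s|^p]\,ds=0$.

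The main obstacle is exactly this upgrade, i.e. exchanging the supremum in $\mathbb{E}^G=\sup_{P\in\mathcal{P}}E_P$ with the time integral, since for a generic measurable process $\sup_{P}\int_0^T E_P[|u_s|^p]\,ds$ and $\int_0^T\sup_{P}E_P[|u_s|^p]\,ds$ genuinely differ. The point that makes it go through is that $L_G^p(\Omega_T)$ is separable, so $\mathbb{E}^G$ is a countable supremum $\sup_{i}E_{P_i}$ of linear expectations (choose $P_i$ exhausting the sup on a countable dense subset of $L_G^p(\Omega_T)$ and extend by the $1$-Lipschitz continuity of both sides). Then $\mathbb{E}^G[\int_0^T|u_s|^p\,ds]=0$ yields $\int_0^T E_{P_i}[|u_s|^p]\,ds=0$, hence $E_{P_i}[|u_s|^p]=0$ for $s$ outside a $ds$-null set $N_i$, for every $i$; off the still null set $\bigcup_i N_i$ we get $\mathbb{E}^G[|u_s|^p]=\sup_iE_{P_i}[|u_s|^p]=0$, so $\Vert u\Vert_{\tilde{M}_G^p}^p=\int_0^T\mathbb{E}^G[|u_s|^p]\,ds=0$ and $\iota$ is injective. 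I expect the only real work to be the countable reduction of $\mathbb{E}^G$ and the compatibility of the Bochner realization with the inclusion into $M_G^p$; the countability is essential, as it is precisely what rules out the diagonal pathology in which a $ds$-null exceptional time-set for each fixed $P$ could otherwise union up to the whole interval.
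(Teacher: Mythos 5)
Your proof is correct in substance, but it takes a genuinely different route from the paper's. The paper proves the statement as a closability property via an elementary, self-contained averaging trick: if $\{\eta_n\}\subset M^0(0,T)$ is Cauchy in $\Vert\cdot\Vert_{\tilde{M}_G^p}$ and $\Vert\eta_n\Vert_{M_G^p}\to 0$, the limit $\eta$ has $\Vert\eta\Vert_{M_G^p}=0$, so each average $\frac1h\int_{(k-1)h}^{kh}\eta_s\,ds$ vanishes quasi-surely; consequently the \emph{shifted} step process $\eta^h_t=\sum_{k=1}^{m-1}1_{(kh,(k+1)h]}(t)\,\frac1h\int_{(k-1)h}^{kh}\eta_s\,ds$ is identically zero, hence $\Vert\eta^h\Vert_{\tilde{M}_G^p}=0$, and the estimate $\Vert\eta\Vert_{\tilde{M}_G^p}\le 2\Vert\eta_n-\eta\Vert_{\tilde{M}_G^p}+\Vert\eta_n-\eta_n^h\Vert_{\tilde{M}_G^p}$ (averaging is a $\tilde{M}_G^p$-contraction by Jensen, and $\Vert\eta_n-\eta_n^h\Vert_{\tilde{M}_G^p}\to 0$ as $h\to 0$ for each fixed step process) gives $\Vert\eta\Vert_{\tilde{M}_G^p}=0$ by letting first $h\to0$, then $n\to\infty$. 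This uses nothing beyond sub-additivity of $\mathbb{E}^G$ and the quasi-sure meaning of a vanishing $M_G^p$-norm. You instead invoke the dual representation $\mathbb{E}^G=\sup_{P\in\mathcal{P}}E_P$, separability of $L_G^p(\Omega_T)$, a countable reduction $\mathbb{E}^G=\sup_i E_{P_i}$, and Fubini under each $P_i$. That is heavier imported machinery (the representation theorem, which the paper's argument avoids entirely), but it isolates the true obstruction---interchanging $\sup_P$ with $\int_0^T ds$, i.e., the danger that $P$-dependent $ds$-null time sets could union up to the whole interval---and shows exactly why countability rules it out; it would also transfer to any sublinear expectation admitting such a countable reduction. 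Two patchable technicalities in your write-up: (a) for a general process, $s\mapsto\mathbb{E}^G[|v_s|^p]$ is an uncountable supremum of measurable functions, so even its measurability already requires your countable reduction, which should therefore be invoked before, not after, extending the norm inequality; (b) the Fubini step for $u$ presumes a jointly measurable version---it is cleaner to apply Fubini only to the approximating step processes $\eta^n$ and pass to the limit in $L^p(ds\times dP_i)$, which yields $\int_0^T E_{P_i}[|u_s|^p]\,ds=0$ directly and avoids the issue.
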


\begin{proof}
Assume that $\{ \eta _{n}\} \subset M^{0}(0,T)$ is a Cauchy sequence w.r.t.
the norm $\Vert \cdot \Vert _{\tilde{M}_{G}^{p}}$ and that $\Vert \eta
_{n}\Vert _{M_{G}^{p}}$ converges to 0. We shall prove that $\Vert \eta
_{n}\Vert _{\tilde{M}_{G}^{p}}$ converges to 0. Actually, since $\{ \eta
_{n}\} \subset M^{0}(0,T)$ is a Cauchy sequence w.r.t. the norm $\Vert \cdot
\Vert _{\tilde{M}_{G}^{p}}$, there exists a process $\eta $ such that $\Vert
\eta _{n}-\eta \Vert _{\tilde{M}_{G}^{p}}\rightarrow 0$. Then we have $\Vert
\eta \Vert _{M_{G}^{p}}=0$ since $\Vert \eta _{n}\Vert _{M_{G}^{p}}$
converges to 0. For $m\in \mathbb{N}$, set $h=T/m$ and
\begin{equation*}
\eta _{t}^{h}=\sum_{k=1}^{m-1}1_{(kh,(k+1)h]}(t)\frac{1}{h}%
\int_{(k-1)h}^{kh}\eta _{s}ds.
\end{equation*}

Clearly, we have $\Vert \eta ^{h}\Vert _{\tilde{M}_{G}^{p}}=0$.
Consequently, we have
\begin{eqnarray*}
\Vert \eta \Vert _{\tilde{M}_{G}^{p}} &\leq &\Vert \eta ^{h}-\eta \Vert _{%
\tilde{M}_{G}^{p}} \\
&\leq &\Vert \eta ^{h}-\eta _{n}^{h}\Vert _{\tilde{M}_{G}^{p}}+\Vert \eta
_{n}-\eta _{n}^{h}\Vert _{\tilde{M}_{G}^{p}}+\Vert \eta _{n}-\eta \Vert _{%
\tilde{M}_{G}^{p}} \\
&\leq &2\Vert \eta _{n}-\eta \Vert _{\tilde{M}_{G}^{p}}+\Vert \eta _{n}-\eta
_{n}^{h}\Vert _{\tilde{M}_{G}^{p}}.
\end{eqnarray*}%
First letting $h$ converge to 0, then letting $n$ go to infinity, we have $%
\Vert \eta \Vert _{\tilde{M}_{G}^{p}}=0$. So
\begin{equation*}
\Vert \eta _{n}\Vert _{\tilde{M}_{G}^{p}}\leq \Vert \eta _{n}-\eta \Vert _{%
\tilde{M}_{G}^{p}}\rightarrow 0.
\end{equation*}
\end{proof}

\begin{lemma}
\label{lem-uni} Assume $\int_{0}^{t}\eta _{s}ds+K_{t}=L_{t}$, where $\eta
\in \tilde{M}_{G}^{p}(0,T)$, $K_{t},L_{t}$ are non-increasing $G$%
-martingales with $K_{T},L_{T}\in L_{G}^{p}(\Omega _{T})$ for some $p>1$. Then we have $%
\int_{0}^{t}\eta _{s}ds=0$ and $K_{t}=L_{t}$.
\end{lemma}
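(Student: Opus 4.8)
The plan is to extract the drift directly from the two defining properties of $K$ and $L$, namely that they are \emph{non-increasing} and that they are \emph{$G$-martingales}, and to close the argument with the norm of $\tilde{M}_{G}^{p}$. I would deliberately avoid invoking the representation theorem for $K,L$: that theorem only places their densities in the closure $\mathcal{K}^p$, so the densities need not converge and Corollary~3.5 of Song (Lemma~\ref{lem3.9}) cannot be applied to them. Note first that $\eta\in\tilde{M}_{G}^{p}\subset M_{G}^{p}$ by Proposition~\ref{prop5.8}, so all integrals below are well defined.

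First I would record a two-sided conditional estimate. Fix a partition $0=t_0<\cdots<t_m=T$ and write, for each $k$, $\int_{t_k}^{t_{k+1}}\eta_u\,du=(L_{t_{k+1}}-L_{t_k})-(K_{t_{k+1}}-K_{t_k})$. Since $K$ is non-increasing, $-(K_{t_{k+1}}-K_{t_k})\ge0$, whence $\int_{t_k}^{t_{k+1}}\eta_u\,du\ge L_{t_{k+1}}-L_{t_k}$; applying $\mathbb{E}^{G}_{t_k}[\cdot]$ and using that $L$ is a $G$-martingale (so that $\mathbb{E}^{G}_{t_k}[L_{t_{k+1}}-L_{t_k}]=0$) gives $\mathbb{E}^{G}_{t_k}[\int_{t_k}^{t_{k+1}}\eta_u\,du]\ge0$. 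Running the same computation for $-\eta$, now discarding the non-positive $L$-increment and using that $K$ is a $G$-martingale, gives $\mathbb{E}^{G}_{t_k}[-\int_{t_k}^{t_{k+1}}\eta_u\,du]\ge0$.

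Next I would bring in $\tilde{M}_{G}^{p}$. Choose step processes $\eta^n\to\eta$ in $\Vert\cdot\Vert_{\tilde{M}_{G}^{p}}$, with $\eta^n\equiv\varphi^n_k$ on $(t^n_k,t^n_{k+1}]$ and $\varphi^n_k$ being $\mathcal{F}_{t^n_k}$-measurable, and apply the first step on the partition underlying $\eta^n$. Since $\varphi^n_k\,h^n_k=\mathbb{E}^{G}_{t^n_k}[\int_{t^n_k}^{t^n_{k+1}}\eta^n_u\,du]$ and $|\mathbb{E}^{G}_{t}[X]-\mathbb{E}^{G}_{t}[Y]|\le\mathbb{E}^{G}_{t}[|X-Y|]$, the two inequalities upgrade to
\begin{equation*}
|\varphi^n_k|\,h^n_k\le\mathbb{E}^{G}_{t^n_k}\Big[\int_{t^n_k}^{t^n_{k+1}}|\eta^n_u-\eta_u|\,du\Big].
\end{equation*}
Squaring and using conditional Jensen for $x\mapsto x^2$, Cauchy--Schwarz in time, the tower property, and the sub-additivity $\mathbb{E}^{G}[\int_a^b\cdot\,du]\le\int_a^b\mathbb{E}^{G}[\cdot]\,du$, I would sum over $k$ to obtain
\begin{equation*}
\Vert\eta^n\Vert^2_{\tilde{M}_{G}^{p}}=\sum_k\mathbb{E}^{G}[|\varphi^n_k|^2]h^n_k\le\int_0^T\mathbb{E}^{G}[|\eta^n_u-\eta_u|^2]\,du=\Vert\eta^n-\eta\Vert^2_{\tilde{M}_{G}^{p}}.
\end{equation*}
Letting $n\to\infty$ forces $\Vert\eta\Vert_{\tilde{M}_{G}^{p}}=0$ by the triangle inequality, hence $\int_0^t\eta_s\,ds\equiv0$ and therefore $K_t\equiv L_t$.

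I expect the main obstacle to be the first step: the sign information must be obtained purely from monotonicity and the martingale property, precisely because the densities of $K$ and $L$ live only in the closure $\mathcal{K}^p$ and need not converge, so the clean separation of the $ds$-part from the $d\langle B\rangle$-part (Lemma~\ref{lem3.9}) is unavailable. The second delicate point is that the estimate genuinely requires the $\tilde{M}_{G}^{p}$-norm and not $M_{G}^{p}$: it is the sub-additivity $\mathbb{E}^{G}[\int_0^T\cdot\,du]\le\int_0^T\mathbb{E}^{G}[\cdot]\,du$, combined with conditional Jensen, that converts the pointwise bound into the self-improving inequality $\Vert\eta^n\Vert_{\tilde{M}_{G}^{p}}\le\Vert\eta^n-\eta\Vert_{\tilde{M}_{G}^{p}}$, and this is exactly why $\eta$ is confined to $\tilde{M}_{G}^{p}$.
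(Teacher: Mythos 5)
Your proof is correct, and it takes a genuinely different route from the paper's. The paper does not work with $\eta$ as a whole: it splits $\eta=\eta^{+}-\eta^{-}$, observes that $\int_{0}^{t}\eta_{s}^{+}ds+K_{t}$ is again non-increasing (inherited from $L$ and $K$), and then sandwiches $0\geq\mathbb{E}_{s}^{G}[-\int_{s}^{t}\eta_{u}^{+}du]\geq\mathbb{E}_{s}^{G}[K_{t}-K_{s}]=0$ to conclude that $-\int_{0}^{\cdot}\eta_{s}^{+}ds$ is a $G$-martingale; it then invokes a standalone claim --- if $\zeta\in\tilde{M}_{G}^{p}(0,T)$ and $A_{t}=\int_{0}^{t}\zeta_{s}ds$ is a $G$-martingale then $A\equiv0$ --- proved by comparing forward and backward averages $\hat{\zeta}^{n},\check{\zeta}^{n}$ in an \emph{unconditional} estimate $\mathbb{E}^{G}[-A_{T}]\leq\lim_{n}\{T^{p-1}\int_{0}^{T}\mathbb{E}^{G}[|\hat{\zeta}_{t}^{n}-\check{\zeta}_{t}^{n}|^{p}]dt\}^{1/p}=0$, and that claim's proof uses the $G$-martingale decomposition theorem (hence $p>1$) to assert $A$ is non-increasing; Proposition~\ref{prop5.8} then finishes. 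You instead keep $\eta$ intact, derive the two-sided \emph{conditional} bounds $\mathbb{E}_{t_{k}}^{G}[\pm\int_{t_{k}}^{t_{k+1}}\eta_{u}du]\geq0$ (using, symmetrically, ``$L$ martingale $+$ $K$ non-increasing'' for one sign and the reverse for the other --- exactly the two hypotheses, used exactly once each), transfer them to the step approximants via adaptedness of $\varphi_{k}^{n}$, and close with the self-improving inequality $\Vert\eta^{n}\Vert_{\tilde{M}_{G}^{p}}\leq\Vert\eta^{n}-\eta\Vert_{\tilde{M}_{G}^{p}}$. What each approach buys: the paper's isolates a reusable fact about $G$-martingales with absolutely continuous paths, but it pays by invoking the decomposition theorem and by needing $\eta^{\pm}\in\tilde{M}_{G}^{p}(0,T)$, which is glossed over (positive parts of the smooth cylinder coefficients of step processes are no longer smooth, so this itself requires an approximation argument); your proof avoids both the splitting and the decomposition theorem, making it more self-contained and making transparent that the only inputs are monotonicity, the conditional martingale property, and $\tilde{M}$-approximability. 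The common engine is the same in both: the value of a step process is known at the left endpoint of each interval, which converts one-sided information on signed increments into control of absolute values --- the paper realizes this through $\hat{\zeta}^{n}-\check{\zeta}^{n}$, you realize it through $|\varphi_{k}^{n}|$. Your reliance on the sub-Fubini inequality $\mathbb{E}^{G}[\int\cdot\,du]\leq\int\mathbb{E}^{G}[\cdot]\,du$ and on conditional Jensen is at the same level of (implicit) rigor as the paper's own use of convergence of shifted averages for elements of $\tilde{M}_{G}^{p}(0,T)$, so this is not a gap relative to the paper's standard.
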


\begin{proof}
Let $\zeta \in \tilde{M}_{G}^{p}(0,T)$. We claim that $A=0$ if $%
A_{t}:=\int_{0}^{t}\zeta _{s}ds$ is a $G$-martingale. In fact $A_{t}$ must
be a non-increasing $G$-martingale by $G$-martingale decomposition theorem.
For $n\in \mathbb{N}$, set $h=T/n$ and
\begin{align*}
\hat{\zeta}_{t}^{n}& =\sum_{k=0}^{n-1}1_{(kh,(k+1)h]}(t)\frac{1}{h}%
\int_{kh}^{(k+1)h}\zeta _{s}ds, \\
\check{\zeta}_{t}^{n}& =\sum_{k=1}^{n-1}1_{(kh,(k+1)h]}(t)\frac{1}{h}%
\int_{(k-1)h}^{kh}\zeta _{s}ds.
\end{align*}%
For $t\in (kh,(k+1)h]$, we have $\mathbb{E}^{G}[\hat{\zeta}_{t}^{n}-\check{%
\zeta}_{t}^{n}]=\frac{1}{h}\mathbb{E}^{G}[-(A_{kh}-A_{(k-1)h})]$. So

\begin{align*}
0\leftarrow \{T^{p-1}\int_{0}^{T}\mathbb{E}^{G}[|\hat{\zeta}_{t}^{n}-\check{\zeta }%
_{t}^{n}|^{p}]dt\}^{1/p} & \geq \int_{0}^{T}\mathbb{E}^{G}[\hat{\zeta}%
_{t}^{n}-\check{\zeta}_{t}^{n}]dt \\
& =\sum_{k=1}^{n-1}\mathbb{E}^{G}[-(A_{kh}-A_{(k-1)h})] \\
& \geq \mathbb{E}^{G}[-A_{\frac{(n-1)T}{n}}]\rightarrow \mathbb{E}%
^{G}[-A_{T}].
\end{align*}
Assume $\int_{0}^{t}\eta_{s}ds+K_{t}=L_{t}$. Since $L_{t}$ is
non-increasing, $\tilde{L}_{t}:=\int_{0}^{t}\eta_{s}^{+}ds+K_{t}$ is also
non-increasing.  By this we have $0\geq%
\mathbb{E}_s^{G}[-\int_{s}^{t}\eta_{s}^{+}ds]\geq \mathbb{E}_s%
^{G}[K_{t}-K_{s}]=0 $. So $-\int_{0}^{t}\eta_{s}^{+}ds$ is a $G$-martingale,
which implies that $\int_{0}^{t}\eta_{s}^{+}ds=0$. By the same arguments, we
have $\int_{0}^{t}\eta_{s}^{-}ds=0$. By Proposition \ref {prop5.8}, we have $\|\eta\|_{\tilde{M}^p_G}=0$.
\end{proof}

\section{Appendix: Backward SDEs driven by $G$-BM}

In [HJPS2012] the authors studied the backward stochastic differential
equations driven by a $G$-Brownian motion $(B_{t})_{t\geq0}$ in the
following form:
\begin{equation}
Y_{t}=\xi+\int_{t}^{T}f(s,Y_{s},Z_{s})ds-%
\int_{t}^{T}Z_{s}dB_{s}-(K_{T}-K_{t}).  \label{equA}
\end{equation}
where $K$ is a non-increasing $G$-martingale.

The main result in [HJPS2012] is the existence and uniqueness of a solution $%
(Y,Z,K)$ for equation (\ref{equA}) in the $G$-framework under the following
assumption: there exists some $\beta>1$ such that

\begin{description}
\item[(H1)] for any $y,z$, $f(\cdot,\cdot,y,z)\in M_{G}^{\beta}(0,T)$;

\item[(H2)] $|f(t,\omega,y,z)-f(t,\omega,y^{\prime},z^{\prime})|\leq
L(|y-y^{\prime}|+|z-z^{\prime}|)$ for some $L>0$.
\end{description}

\begin{definition}
\label{defA1} Let $\xi \in L_{G}^{\beta}(\Omega_{T})$ and $f$ satisfy (H1)
and (H2) for some $\beta>1$. A triplet of processes $(Y,Z,K)$ is called a
solution of equation (\ref{equA}) if for some $1<\alpha \leq \beta$ the
following properties hold:

\begin{description}
\item[(a)] $Y\in S_{G}^{\alpha}(0,T)$, $Z\in H_{G}^{\alpha}(0,T)$, $K$ is a
non-increasing $G$-martingale with $K_{0}=0$ and $K_{T}\in L_{G}^{\alpha
}(\Omega_{T})$;

\item[(b)] $Y_{t}=\xi+\int_{t}^{T}f(s,Y_{s},Z_{s})ds-%
\int_{t}^{T}Z_{s}dB_{s}-(K_{T}-K_{t})$.
\end{description}
\end{definition}

The main result in [HJPS2012] is the following theorem:

\begin{theorem}
\label{thmA} Assume that $\xi \in L_{G}^{\beta}(\Omega_{T})$ and $f$
satisfies (H1) and (H2) for some $\beta>1$. Then equation (\ref{equA}) has a
unique solution $(Y,Z,K)$. Moreover, for any $1<\alpha<\beta$ we have $Y\in
S_{G}^{\alpha}(0,T)$, $Z\in H_{G}^{\alpha}(0,T)$ and $K_{T}\in L_{G}^{\alpha
}(\Omega_{T})$.
\end{theorem}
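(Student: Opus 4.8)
The plan is to follow the classical three-stage strategy for backward equations---a priori estimates, uniqueness, then existence by a fixed-point argument---adapted to the $G$-framework, where the essential new ingredient is the $G$-martingale decomposition theorem (\ref{MD}) recalled above. The reason this is nontrivial is precisely the presence of the non-increasing $G$-martingale part $K$, which has no classical analogue and for which one must separately control integrability.

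First I would establish a priori estimates for any solution $(Y,Z,K)$ of (\ref{equA}) in terms of the data $\xi$ and $f(\cdot,0,0)$. Applying the $G$-It\^o formula to $|Y_t|^{2}$ (or to an exponentially weighted $e^{\lambda t}|Y_t|^{2}$ to absorb the Lipschitz constant $L$ from (H2)) and taking $\mathbb{E}_t^{G}$, the stochastic-integral term $\int Z\,dB$ vanishes under the conditional expectation while $-(K_T-K_t)$ enters with a favorable sign because $K$ is non-increasing; together with the Lipschitz bound (H2) this produces a Gronwall-type inequality. Using $G$-BDG inequalities to bound $\mathbb{E}^{G}[\sup_s|Y_s|^{\alpha}]$ and the quadratic-variation term, I obtain
\[
\|Y\|_{S_G^{\alpha}}+\|Z\|_{H_G^{\alpha}}+\|K_T\|_{L_G^{\alpha}}\leq C\big(\|\xi\|_{L_G^{\alpha}}+\|f(\cdot,0,0)\|_{M_G^{\alpha}}\big).
\]
Uniqueness is then immediate: the difference of two solutions solves a $G$-BSDE with zero terminal value whose driver is the difference of drivers, and the estimate above combined with (H2) forces the difference to vanish.

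For existence I would first treat the driver-independent case $f=f(s)\in M_G^{\beta}(0,T)$. Setting $Y_t:=\mathbb{E}_t^{G}[\xi+\int_t^{T}f(s)\,ds]$, the process $Y_t+\int_0^{t}f(s)\,ds$ is a $G$-martingale in $L_G^{\beta}$, so the decomposition (\ref{MD}) gives
\[
Y_t+\int_0^{t}f(s)\,ds=\mathbb{E}^{G}\Big[\xi+\int_0^{T}f(s)\,ds\Big]+\int_0^{t}Z_s\,dB_s+K_t
\]
with $K$ a non-increasing $G$-martingale, and rearranging is exactly (\ref{equA}); the required integrability of $Z$ and $K_T$ comes from the estimates attached to the decomposition theorem in \cite{Song11}. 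For general $f$ I would set up a contraction on $S_G^{\alpha}(0,T)\times H_G^{\alpha}(0,T)$: given $(y,z)$, the driver $f(s,y_s,z_s)$ lies in $M_G^{\alpha}$ by (H1)--(H2), so the linear case produces $\Gamma(y,z):=(Y,Z,K)$. Applying the a priori estimate to $\Gamma(y^1,z^1)-\Gamma(y^2,z^2)$ shows $\Gamma$ is a contraction, either on a short interval or under an $e^{\lambda t}$-weighted norm, and the Banach fixed-point theorem yields a solution, short intervals being pasted to cover $[0,T]$. The ``moreover'' statement for every $1<\alpha<\beta$ then follows from the same estimates, since $\xi\in L_G^{\beta}\subset L_G^{\alpha}$ and $f(\cdot,0,0)\in M_G^{\beta}\subset M_G^{\alpha}$.

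The main obstacle, absent in the classical Wiener theory, is obtaining the correct integrability of the non-increasing part $K_T$. In the classical case $K$ does not appear and $Z$ is controlled by martingale representation alone, whereas under $\mathbb{E}^{G}$ there is no single dominating measure and $K_T\in L_G^{\alpha}$ must be extracted from the $G$-martingale decomposition together with the delicate estimates of \cite{Song11}. Establishing that $K_T$ inherits the integrability of $\xi$ and $f$, uniformly enough to close both the a priori estimate and the contraction, is the crux; once this is secured, the remaining Picard-iteration steps mirror the classical argument.
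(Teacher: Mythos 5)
First, a point of orientation: the paper you are working from does not prove Theorem \ref{thmA} at all --- it is quoted as the main result of [HJPS2012] (see the Appendix), so the comparison must be with that paper's proof. Your skeleton (a priori estimates, uniqueness, the driver-independent case via Song's $G$-martingale decomposition (\ref{MD}), then Picard iteration) matches theirs in outline, and your treatment of the linear case is exactly right: take $Y_t=\mathbb{E}_t^G[\xi+\int_t^T f(s)\,ds]$, decompose the $G$-martingale $Y_t+\int_0^t f(s)\,ds$, and read off $Z$ and $K$, with the integrability of $Z$ and $K_T$ coming from the estimates attached to the decomposition theorem of \cite{Song11} --- which is also the true source of the strict loss $\alpha<\beta$.

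The genuine gap sits at the heart of both your a priori estimate and your contraction step, and it is the same error twice: you assert that $-(K_T-K_t)$ ``enters with a favorable sign because $K$ is non-increasing.'' It does not. After applying It\^o's formula to $|Y_t|^2$ (or $e^{\lambda t}|Y_t|^2$), the $K$-part appears through the cross term $2\int_t^T Y_s\,dK_s$, whose sign depends on the sign of $Y_s$; there is no Skorokhod-type minimality condition here forcing $Y\,dK\le 0$. Worse, in the uniqueness/contraction step the difference of two candidate solutions carries $K^1-K^2$, which is neither monotone nor a $G$-martingale, and the term $\int \hat{Y}_s\,d(K^1-K^2)_s$ cannot be absorbed by an exponential weight; nor is there a Girsanov transformation or a single dominating measure under $\mathbb{E}^G$ with which to linearize the driver. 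So the classical $e^{\lambda t}$-computation that would make your map $\Gamma$ a strict contraction on $S_G^{\alpha}\times H_G^{\alpha}$ simply does not go through --- this is the recognized crux of the $G$-BSDE theory, not a routine adaptation. [HJPS2012] circumvent it differently: they keep $K^1$ and $K^2$ separate, bound the signless cross terms by the total variations $|K^i_T|$ (which are controlled uniformly in $L_G^{\alpha}$, $\alpha<\beta$, by the one-solution a priori estimates), estimate $|\hat{Y}_t|^{\alpha}$ under $\mathbb{E}_t^G$ using the characteristic identity $\mathbb{E}_t^G[K^i_T-K^i_t]=0$ of non-increasing $G$-martingales, and accept a square-root loss in the resulting $Z$-estimate, roughly $\|\hat{Z}\|_{H_G^{\alpha}}^{\alpha}\le C\,\|\hat{Y}\|_{S_G^{\alpha}}^{\alpha/2}$, so that the Picard scheme converges on small time intervals (then pasted) without ever being a Banach contraction. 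Two smaller repairs you would also need: It\^o's formula for $|y|^{\alpha}$ with non-integer $\alpha\in(1,2)$ requires smoothing near the origin; and the constants in all these estimates blow up as $\alpha\uparrow\beta$, which is why the conclusion holds only for $1<\alpha<\beta$ --- it is not merely the inclusion $L_G^{\beta}\subset L_G^{\alpha}$, as your last sentence suggests.
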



\renewcommand{\refname}{\large References}{\normalsize \ }

\end{document}